\renewcommand{\subset}{\subseteq}
\def\moverlay{\mathpalette\mov@rlay}
\def\mov@rlay#1#2{\leavevmode\vtop{%
   \baselineskip\z@skip \lineskiplimit-\maxdimen
   \ialign{\hfil$#1##$\hfil\cr#2\crcr}}}
\numberwithin{equation}{section}
\definecolor{Ruta}{rgb}{0.309, 0.459, 0.208}
\definecolor{Vino}{rgb}{0.256,0,0}
\definecolor{Siva}{rgb}{0.116,0.116,0.116}
\definecolor{Siva'}{rgb}{0.250,0.116,0.116}
\newcommand{\cB}{\mathcal{B}}
\newcommand{\cC}{\mathcal{C}}
\def\beq{ \begin{equation} }
\def\eeq{ \end{equation} }
\def\bep{\begin{proof}}
\def\eep{\end{proof}}
\def\ben{ \begin{enumerate} }
\def\een{ \end{enumerate} }
\newcommand{\id}{{\rm id}}
\newcommand{\tnz}{\otimes}
\newcommand{\ol}{\overline}
\newcommand{\tr}{\mathrm{tr}}
\newtheorem{theorem}{Theorem}[section]
\newtheorem{proposition}[theorem]{Proposition}
\newtheorem{corollary}[theorem]{Corollary}
\theoremstyle{definition}
\newtheorem{lemma}[theorem]{Lemma}
\newtheorem{remark}[theorem]{Remark}
\newtheorem{example}[theorem]{Example}
\newcommand{\X}{\langle x\rangle}
\newcommand{\Xt}{\langle x,x^t\rangle}
\newcommand{\cyc}{\stackrel{\mathrm{cyc}}{\thicksim}}
\newcommand{\s}{\sigma} 
\newcommand{\CC}{\mathbb{C}}
\newcommand{\RR}{\mathbb{R}}
\newcommand{\N}{\mathbb{N}}
\newcommand{\F}{\mathbb{F}}
\DeclareMathOperator{\GM}{GM}
\newcommand{\GMt}{\GM_n^\dagger}
\newcommand{\td}{\tilde}
\newcommand{\wtd}{\widetilde}
\newcommand{\cU}{{\mathcal U}}
\newcommand{\cV}{{\mathcal V}}
\newcommand{\cW}{{\mathcal W}}
\newcommand{\OO}{\mathrm O}
\newcommand{\GL}{\mathrm {GL}}
\newcommand{\U}{\mathrm {U}}
\newcommand{\D}{\mathrm D}
\newcommand{\Mr}{\cM(\RR)}
\newcommand{\MF}{\cM(\F)}
\newcommand{\Rn}{\mathrm{R}_n} 
\newcommand{\Rnt}{\Rn^\dagger}
\newcommand{\Rp}{\text{generalized polynomial}\xspace}  
\newcommand{\Rps}{\text{generalized polynomials}\xspace} 
\newcommand{\RPS}{\text{Generalized Polynomials}\xspace} 
\newcommand{\w}{\mathfrak{W}} 
\newcommand{\nA}{M_n(\F)\ast\F\X} 
\newcommand{\nAt}{M_n(\F)\ast\F\Xt} 
\newcommand{\GMn}{\mathrm{g}\!\GM_{ns}} 
\newcommand{\Rns}{\mathrm{g}\mathrm{R}_{ns}}  
\newcommand{\Ftr}{T\X}  
\newcommand{\cT}{{\mathcal T}}
\newcommand{\cgl}{\text{$(C_{\GL_n}(B),\GL)$-concomitant}\xspace}
\newcommand{\cg}{\text{$(C_{G_n}(B),G)$-concomitant}\xspace}
\newcommand{\co}{\text{$(C_{\OO_n}(B),\OO)$-concomitant}\xspace}
\newlength{\depthofsumsign}
\newcommand{\nsum}[1][1.4]{
    \mathop{%
        \raisebox
            {-#1\depthofsumsign+1\depthofsumsign}
            {\scalebox
                {#1}
                {$\displaystyle\sum$}%
            }
    }
}
\newcommand*{\rom}[1]{\expandafter\@slowromancap\romannumeral #1@}
\title[Free function theory through matrix invariants]{Free function theory through matrix invariants}
\author[I. Klep]{Igor Klep${}^{1}$}
\address{Igor Klep, Department of Mathematics, 
The University of Auckland, New Zealand}
\email{igor.klep@auckland.ac.nz}
\thanks{${}^1$Supported by the Marsden Fund Council of the Royal Society of New Zealand. Partially supported by the Faculty Research Development Fund (FRDF) of The
University of Auckland (project no. 3701119). Partially supported by the Slovenian Research Agency grants P1-0222, L1-4292 and L1-6722. Part of this research was done while the author was on leave from the University of Maribor.}
\author[\v S. \v Spenko]{\v Spela \v Spenko${}^{2}$}
\address{\v Spela \v Spenko,  Institute of  Mathematics, Physics, and Mechanics,  Ljubljana, Slovenia} \email{spela.spenko@imfm.si}
\thanks{${}^{2}$Supported by the Slovenian Research Agency and in part by the Slovene Human Resources Development and Scholarship Fund.}
\subjclass[2010]{Primary 16R30, 32A05, 46L52, 47A56; Secondary 15A24, 46G20}
\date{\today}
\keywords{Free algebra, free analysis, polynomial identities, trace identities, concomitants, invariant theory, analytic maps, inverse function theorem, generalized polynomials}
\begin{document}

\setcounter{tocdepth}{3}
\contentsmargin{2.55em} 
\dottedcontents{section}[3.8em]{}{2.3em}{.4pc} 
\dottedcontents{subsection}[6.1em]{}{3.2em}{.4pc}
\dottedcontents{subsubsection}[8.4em]{}{4.1em}{.4pc}

\makeatletter
\newcommand{\mycontentsbox}{%
{\centerline{NOT FOR PUBLICATION}
\addtolength{\parskip}{+.3pt}
\small\tableofcontents}}
\def\enddoc@text{\ifx\@empty\@translators \else\@settranslators\fi
\ifx\@empty\addresses \else\@setaddresses\fi
\newpage\mycontentsbox\newpage\printindex}
\makeatother

\begin{abstract}
%
In this article we introduce powerful tools and techniques from invariant theory to free analysis. 
This enables us to study free maps with involution. 
These maps are free noncommutative analogs of real analytic functions of several variables. 
With  examples we demonstrate that 
they do not exhibit strong rigidity properties of their involution-free free counterparts. 
We present a characterization of polynomial free maps via  properties of their finite-dimensional slices. 
This is used to establish power series expansions for analytic free maps about scalar and non-scalar points; the latter are given by series of generalized polynomials for which we obtain an invariant-theoretic characterization.
  Finally, we  give an inverse and implicit  function theorem for free maps with involution. 
\end{abstract}

\maketitle

\section{Introduction}

The notion of a free map arises naturally in free probability, the study of noncommutative rational functions \cite{AlpDu,BGM,HMV}, and systems theory \cite{HBJP, KV0}. 
The study of these maps is in the realm of free analysis \cite{AM1,AM2,AY, AK,BV,KV,HKM12,MS,Pas,PT,Po1,Po2,Tay,Voc04,Voc10}. 

The main contribution of this paper is to introduce powerful invariant-theoretic methods \cite{Pro} to free analysis. 
We present an alternative, algebraic approach to free function theory. 
While most of the current efforts in free analysis are focused on (involution-free) free maps -- free analogs of analytic functions in several complex variables -- where strong rigidity is observed, our main attention is to {\em free maps with involution}, e.g.~ noncommutative polynomials, rational function or power series in $x,x^t$. 
Our methods are uniform in that they work in both cases with only minimal adaptations needed. 
Thus we recover some of the existing results on (involution-free) free maps (cf. \cite{AM2,KV,Pas}).

 We next give a list of the main results, that at the same time serves as a guide to the paper; we refer to Section \ref{sec2} for definitions and unexplained terminology. 
\begin{enumerate}[\rm (1)]
\item
A free map with involution $f$ is a polynomial in $x,x^t$ if and only if there is $d\in \N$ such that each of the level functions $f[n]$ is a polynomial of degree $\leq d$ (Proposition \ref{hompol});
\item
Analytic free maps with involution admit convergent power series expansions 
about scalar points (Theorem \ref{analit});
\item
Analytic free maps with involution admit convergent power series expansions about non-scalar points (Theorem \ref{rumanalit}, Theorem \ref{rumanalitO}), whose  homogeneous parts are generalized polynomials. We present  an invariant theoretic characterization of the latter in Subsection \ref{ssec4};
\item
Free inverse and implicit function theorems for differentiable free maps with involution are the theme of Section \ref{sec5}, see Theorem \ref{IFFT}, Corollary \ref{IFFT2}, and Theorem \ref{IFFTn};
\item
Section \ref{ex} presents several illustrating examples demonstrating non-rigidity properties 
of free maps with involution. For instance, we give an example of a bounded smooth free map with involution that is not analytic (Example \ref{sin}).
\end{enumerate}

\def\cM{\mathcal M}

\section{Preliminaries}\label{sec2}
In this section we present preliminaries from free analysis, polynomial identities \cite{Dre,Row} and invariant theory \cite{Pro}
needed in the sequel.

\subsection{Notation}
Let $\F\in \{\RR,\CC\}$ and let $\cM(\F)$ stand for $\bigcup_n  M_n(\F)$. We denote the  monoid generated by $x_1,\dots,x_g$ by $\X$, and the free associative algebra in the variables $x=(x_1,\dots,x_g)$ by $\F\X$. 
The free algebra with involution in the variables $x_1,x_1^t,\dots,x_g,x_g^t$ is denoted by $\F\Xt$.
The elements of degree $d$ in $\F\X$ (resp. $\F\Xt$) are denoted by $\F\X_d$ (resp. $\F\Xt_d$). 
We write 
\[C=\F\big[x_{ij}^{(k)}\mid 1\leq i,j\leq n,1\leq k\leq g\big]  
\]
for the commutative polynomial ring in $gn^2$ variables.
We  equip $M_n(C)$ with the transpose involution fixing $C$ pointwise. 
The matrices $X_k=(x_{ij}^{(k)})\in M_n(C)$, $1\leq k\leq g$, are called {\bf generic matrices}. 
By $\GM_n$ 
we denote the {unital} subalgebra of $M_n(C)$ generated by generic matrices, 
and by  $\GMt$  the subalgebra of $M_n(C)$ generated by  generic matrices and their transposes.
We let $\Rn$ stand for the subalgebra of $M_n(C)$ generated by the generic matrices and traces $\tr(X_{i_1}\cdots X_{i_k})$ {of their products},  
and  $\Rnt$ for the subalgebra of $M_n(C)$ generated by  generic matrices,  their transposes, and traces $\tr(U_{i_1}\cdots U_{i_k})$, $U_j\in \{X_j,X_j^t\}$.
The center of $\Rn$ (resp. $\Rnt$) is generated by the traces, we denote it by $Z(\Rn)$ (resp. $Z(\Rnt)$).

\subsection{Free Sets and Free Maps}
Let $G=(G_n)_n$ be a sequence of groups {with $G_n\subseteq\GL_n(\F)$}, 
satisfying \beq\label{grgp}
G_n \oplus G_m = \begin{pmatrix} G_n & 0 \\ 0 &G_m \end{pmatrix} \subseteq G_{n+m}.
\eeq
 We will be primarily concerned with the case $G_n=\GL_n(\F)$ for all $n$, or
$G_n$ is the orthogonal group $\OO_n(\RR)$ for all $n$. The modifications needed for the case
of the unitary groups $G_n=\U_n(\CC)$ will be discussed in  Appendix \ref{apU}.  
For simplicity of notation we write $\GL_n,\OO_n,\U_n$ instead of $\GL_n(\F),\OO_n(\RR),\U_n(\CC)$, respectively. 
Let us denote $\GL=(\GL_n)_{n\in \N}$, $\OO=(\OO_n)_{n\in\N}$, $\U=(\U_n)_{n\in \N}$. 
A subset $\cU\subseteq \cM(\F)^g$ is a sequence $\cU=(\cU[n])_{n\in \N}$, where each $\cU[n]\subseteq M_n(\F)^g$. The set $\cU$ is a {\bf $G$-free set}
if it is closed with respect to  simultaneous $G$-similarity  and with respect to direct sums; i.e., for every $m,n\in \N$:
\begin{equation}\label{eq:conj}
\s X\s^{-1}=(\s X_1\s^{-1},\dots,\s X_g\s^{-1})\in \cU[n]
\end{equation}
for all $X\in \cU[n]$, $\s\in G_n$, and 
\begin{equation}\label{eq:oplus}
X\oplus Y=
\begin{pmatrix}
X&0\\
0&Y
\end{pmatrix}\in \cU[m+n]
\end{equation}
for all $X\in \cU[m], Y\in \cU[n]$. 

Let $\cU$ be a $G$-free set. 
We call a sequence of functions $f=(f[n])_{n\in\N}:(\cU[n])_{n\in\N}\to \MF$ a {\bf $G$-free map}, if it respects $G$-similarity and 
direct sums; i.e, for every $m,n\in \N$:
\begin{equation}\label{eq:conj2}
f[n](\s X\s^{-1})=\s\, f[n](X)\,\s^{-1} 
\end{equation}
for all $X\in \cU[n]$, $\s\in G_n$, and
\begin{equation}\label{eq:oplus2}
f[m+n](X\oplus Y)=f[m](X)\oplus f[n](Y) 
\end{equation}
for all $X\in \cU[m], Y\in \cU[n]$.
{In the language of invariant theory} \cite{Pro,KP} the condition \eqref{eq:conj2} says that $f[n]$ is a $G_n$-concomitant. If $f$ satisfies  only \eqref{eq:conj2} for all $n$ (and not necessarily \eqref{eq:oplus2}) we call it a {\bf free $G$-concomitant}. 
Sometimes a $\GL$-free map is called simply a {\bf free map} and an $\OO$-free map is a {\bf free map with involution}.

With a slight abuse of notation we sometimes also refer to a map $f:\cU\to \cM$ as a $G$-free map  if its domain $\cU$ is only closed under direct sums, $f$ respects direct sums  and $f$ respects $G$-similarity on $\cU$; i.e, for every  $n\in \N$:
$$f[n](\sigma X\sigma^{-1})=\sigma\, f[n](X)\,\sigma^{-1}$$
 for all $X\in \cU[n]$, $\sigma\in G_n$ such that $\sigma X\sigma^{-1}\in \cU[n]$. 
In this case we can canonically  extend $f$ to the similarity invariant envelope of $\cU$
 (cf.~\cite[Appendix A]{KV}), and remain in the framework of the given definition:

\begin{proposition}\label{simenv}
Let $\cU\subseteq \cM(\F)^g$ be closed under direct sums, and let $f:\cU\to \cM(\F)$ respect direct sums and $G$-similarity on $\cU$. Then 
$$\td\cU=\{\sigma A\sigma^{-1}\mid A\in \cU[n], \sigma\in G_n,n\in \N\}$$
is a $G$-free set, and there exists a unique $G$-free map $\td f:\td\cU\to\cM(\F)$ such that $\td f|_\cU=f$, defined by  $\td f(\s X\s^{-1})=\s f(X)\s^{-1}$ for $X\in \cU[n]$, $\s\in G_n$. 
\end{proposition}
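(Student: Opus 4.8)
The plan is to verify first that $\td\cU$ is a $G$-free set, then construct $\td f$ by the formula $\td f(\s X\s^{-1})=\s f(X)\s^{-1}$, check this is well-defined, and finally confirm it is a $G$-free map restricting to $f$; uniqueness is immediate.

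\textbf{$\td\cU$ is a $G$-free set.} Closure under $G$-similarity is clear: if $Y=\s A\s^{-1}$ with $A\in\cU[n]$, $\s\in G_n$, and $\tau\in G_n$, then $\tau Y\tau^{-1}=(\tau\s)A(\tau\s)^{-1}$ with $\tau\s\in G_n$, so $\tau Y\tau^{-1}\in\td\cU$. For closure under direct sums, take $Y_i=\s_i A_i\s_i^{-1}$ with $A_i\in\cU[n_i]$, $\s_i\in G_{n_i}$, $i=1,2$. Then $Y_1\oplus Y_2=(\s_1\oplus\s_2)(A_1\oplus A_2)(\s_1\oplus\s_2)^{-1}$; since $A_1\oplus A_2\in\cU[n_1+n_2]$ by hypothesis and $\s_1\oplus\s_2\in G_{n_1+n_2}$ by \eqref{grgp}, the direct sum lies in $\td\cU$.

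\textbf{$\td f$ is well-defined.} Suppose $\s A\s^{-1}=\tau B\tau^{-1}$ with $A\in\cU[n]$, $B\in\cU[m]$, $\s\in G_n$, $\tau\in G_m$. Comparing sizes forces $n=m$, and then $B=(\tau^{-1}\s)A(\tau^{-1}\s)^{-1}$ with $\tau^{-1}\s\in G_n$, so both $A$ and its conjugate $B=\rho A\rho^{-1}$ (where $\rho=\tau^{-1}\s$) lie in $\cU[n]$; by the hypothesis that $f$ respects $G$-similarity on $\cU$ we get $f(B)=\rho f(A)\rho^{-1}$. Hence $\tau f(B)\tau^{-1}=\tau\rho f(A)\rho^{-1}\tau^{-1}=\s f(A)\s^{-1}$, so the two candidate values agree and $\td f$ is a genuine function. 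That $\td f|_\cU=f$ follows by taking $\s=I$.

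\textbf{$\td f$ is a $G$-free map.} For $G$-similarity, if $Y=\s A\s^{-1}\in\td\cU$ and $\tau\in G_n$, then $\tau Y\tau^{-1}=(\tau\s)A(\tau\s)^{-1}$, so $\td f(\tau Y\tau^{-1})=(\tau\s)f(A)(\tau\s)^{-1}=\tau\big(\s f(A)\s^{-1}\big)\tau^{-1}=\tau\,\td f(Y)\,\tau^{-1}$. For direct sums, write $Y_i=\s_i A_i\s_i^{-1}$ as above; then $\td f(Y_1\oplus Y_2)=(\s_1\oplus\s_2)f(A_1\oplus A_2)(\s_1\oplus\s_2)^{-1}=(\s_1\oplus\s_2)\big(f(A_1)\oplus f(A_2)\big)(\s_1\oplus\s_2)^{-1}$, using that $f$ respects direct sums on $\cU$; this equals $\big(\s_1 f(A_1)\s_1^{-1}\big)\oplus\big(\s_2 f(A_2)\s_2^{-1}\big)=\td f(Y_1)\oplus\td f(Y_2)$. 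Uniqueness: any $G$-free extension $g$ of $f$ must satisfy $g(\s X\s^{-1})=\s g(X)\s^{-1}=\s f(X)\s^{-1}$ for $X\in\cU$, $\s\in G_n$, which pins down $g$ on all of $\td\cU$. The only subtlety worth care is the well-definedness step, i.e.\ checking that the two conjugating group elements differ by something in $G_n$ and invoking the stated $G$-similarity property of $f$ on $\cU$ correctly; everything else is routine bookkeeping.
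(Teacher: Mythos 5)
Your argument is correct and is exactly the routine verification the paper has in mind; the paper itself omits the proof, deferring to \cite[Appendix A]{KV} with the remark that only "obvious modifications" are needed for general $G$ satisfying \eqref{grgp}, and your write-up supplies precisely those details (in particular the well-definedness step, which correctly reduces to the hypothesis that $f$ respects $G$-similarity within $\cU$ via $\rho=\tau^{-1}\s\in G_n$). No gaps.
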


\begin{remark}
In \cite[Appendix A]{KV} the proof is given in the case $G=\GL$. The same proof with obvious modifications works also for any sequence of groups $G=(G_n)_n$ satisfying \eqref{grgp}, 
in particular for $G\in\{\OO,\U\}$.
\end{remark}

A $G$-free map $f$ is $\F$-analytic  around $0$ if there exists a neighborhood 
\beq\label{eq:nbhd}
\cB(0,\delta)=\bigcup_n\{X\in M_n(\F)^{g}\mid \|X\|<\delta_n\}
\eeq
 of $0$ in $\cM(\F)^g$ such that $f[n]_{ij}$ is $\F$-analytic on $\cB(0,\delta)[n]$, $\delta=(\delta_n)_{n}$, and $\delta_n>0$ for every $n\in \N$.
It is a polynomial map of degree $m$ if $f[n]_{ij}$ are  polynomials in $x_{ij}^{(k)}$ of degree $\leq m$ and at least one of the polynomials $f[n]_{ij}$ is of degree $m$; it is homogeneous of degree $m$  if $f[n]_{ij}$ are homogeneous polynomials of degree $m$ or zero polynomials, and $f[n]_{ij}$ is of degree $m$ for  at least one triple $(n,i,j)$.

\subsection{Trace Polynomials}
The free algebra  with trace $\Ftr$  is the algebra of free noncommutative polynomials in the variables $x_k$  over the polynomial algebra $T$  in the infinitely many variables  $\tr(w)$, where $w$ runs over all representatives of the cyclic equivalence classes of words in the variables $x_k$; i.e., $w\in \X/_{\cyc}$. 
Here two words $u,v\in\X$ are cyclically equivalent, $u\cyc v$, iff $u$ is a cyclic permutation of $v$.
The free $*$-algebra with trace $T^\dagger\Xt$  is the algebra of free noncommutative polynomials in the variables  $x_k, x_k^t$
over the polynomial algebra  $T^\dagger$  in the infinitely many variables  $\tr(w)$, where $w$ runs over all representatives of the {$*$-cyclic} equivalence classes of words in the variables $x_k,x_k^t$; i.e., words $u$ and $v$ are equivalent if $u\cyc v$ {or $u^t\cyc v$}. 
The elements of $\Ftr$ (resp. $T^\dagger\Xt$) are {\bf trace polynomials} (resp. {\bf trace polynomials with involution}) and elements of $T$ (resp. $T^\dagger$) are {\bf pure trace polynomials} (resp. {\bf pure trace polynomials with involution}).
The degree of a trace monomial $\tr(w_1)\cdots\tr(w_m)v$, $w_i,v\in \X$,  equals $|v|+\sum_i |w_i|$, where $|u|$ denotes the length of a word $u$.
The degree of a trace polynomial is the maximum of the degrees of its trace monomials.

{\em Trace identities} of the matrix algebra $M_n(\F)$ (with involution) are the elements in the kernel of the evaluation map from the free algebra (with involution) with trace to $M_n(\F)$; i.e., trace identities of $M_n(\F)$ are trace polynomials that vanish on $n\times n$-matrices. {\em Pure trace identities} are trace identities that belong to $T$ (resp. $T^\dagger$).

The free ($*$-)algebra with trace $\Ftr$ (resp. $T^\dagger \Xt$) and the trace identities have its interpretation in terms of invariants of matrices. Let $G=\GL_n$ (resp. $G=\OO_n$)  act by conjugation on $M_n(\F)$
and diagonally (i.e., componentwise) on $M_n(\F)^g$.
The first fundamental theorem for matrices (with involution)  yields that a $\GL_n$- (resp. $\OO_n$-) concomitant is a trace polynomial (resp. with involution), see \cite[Theorem 2.1, Theorem 7.2]{Pro} or \cite[Chapter 11]{Pro3} for a broader perspective on the subject. 
(For another take on the theory of polynomial identities we refer the reader to \cite{BCM}.) 
Viewing a polynomial map $f:M_n(\F)^g\to M_n(\F)$ as an element $\td f\in M_n(C)$ we can see that the algebra of $\GL_n$- (resp. $\OO_n$-) concomitants is isomorphic to $\Rn$ (resp. $\Rnt$), and $\Rn$ (resp. $\Rnt$) is isomorphic to the quotient of $\Ftr$ (resp. $T^\dagger \Xt$) by the ideal of trace identities (resp. trace identities  with involution).

\section{Analytic $G$-Free Maps and Power Series Expansions about Scalar Points}\label{0}

{In this section we investigate two distinguished classes of free maps,
namely polynomials and analytic free maps. We characterize free maps which
are polynomials in Subsection \ref{subsec:poly}, and use this to show that analytic free maps
admit power series expansions about scalar points 
in Subsection \ref{subsec:analytic}.}
{These results are classical for $G=\GL$ (cf.~\cite{KV,Tay,Voc10}) and are -- to the best of our knowledge -- new for $G=\OO$.}
Throughout this section $G\in\{\GL,\OO\}$.

\subsection{Polynomial Free Maps}\label{subsec:poly}

{We start by characterizing free polynomial maps $f$ via their ``slices'' $f[n]$.
For $G=\GL$ this result is due to 
Kaliuzhnyi-Verbovetskyi and Vinnikov \cite[Theorem 6.1]{KV} who deduce it from their power series expansion theorem for analytic free maps. 
In contrast to this we shall first characterize free polynomial maps and employ this in Subsection \ref{subsec:analytic} to establish power series expansions for analytic $G$-free maps. 
 Our proofs are uniform in that they work for both $G=\GL$ and $G=\OO$, and are purely algebraic,
depending only on the invariant theory of  matrices   \cite{Pro}.}

\begin{proposition}\label{hompol}
Let $f:\MF^g\to \MF $ be a $G$-free map. 
If $f$ is a polynomial map
and $\max_n \deg f[n]= d$, then $f$ is a free 
polynomial of degree $d$.
{That is, $f\in\F\X_d$ if $G=\GL$ and
$f\in\F\Xt_d$ if $G=\OO$.}
\end{proposition}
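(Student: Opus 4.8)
The plan is to combine the invariant-theoretic description of $G$-free maps (a $G_n$-concomitant is a trace polynomial, resp.\ trace polynomial with involution) with the additivity forced by the direct-sum axiom \eqref{eq:oplus2} to kill all the trace terms, and then to use a degree/stabilization argument to turn the resulting sequence of (ordinary, trace-free) polynomials into a single element of $\F\X_d$ (resp.\ $\F\Xt_d$). Throughout I work with $G=\GL$, the case $G=\OO$ being identical after replacing $\F\X$ by $\F\Xt$, $\GM_n$ by $\GMt$, $\Rn$ by $\Rnt$, and trace polynomials by trace polynomials with involution.

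\textbf{Step 1: each slice is a trace polynomial of bounded degree.} Fix $n$. Since $f[n]\colon M_n(\F)^g\to M_n(\F)$ is a polynomial map and a $G_n$-concomitant, the first fundamental theorem quoted in Subsection~2.3 gives $\td{f[n]}\in \Rn$ (resp.\ $\Rnt$), i.e.\ $f[n]$ is represented by a trace polynomial. Because $f[n]$ has ordinary polynomial degree $\le d$ in the entries $x_{ij}^{(k)}$, and every generic matrix entry and every trace $\tr(X_{i_1}\cdots X_{i_k})$ has entry-degree equal to its word length, $f[n]$ is a trace polynomial of degree $\le d$; more precisely we may and do choose a representative that is a sum of trace monomials $\tr(w_1)\cdots\tr(w_m)\,v$ with $|v|+\sum_i|w_i|\le d$ and each $|w_i|\ge 1$.

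\textbf{Step 2: the direct-sum law eliminates all genuine trace terms.} Write the chosen representative of $f$ (uniform in $n$ once $n$ is large) as $f=P+\sum_\alpha c_\alpha(\tr)\,v_\alpha$, where $P\in\F\X_d$ (resp.\ $\F\Xt_d$) is the trace-free part and each remaining summand involves at least one trace factor. Evaluate the identity $f[m+n](X\oplus Y)=f[m](X)\oplus f[n](Y)$ on block-diagonal tuples. On one hand $P$ is a noncommutative polynomial, so $P[m+n](X\oplus Y)=P[m](X)\oplus P[n](Y)$ automatically. On the other hand $\tr$ of a product of block-diagonal matrices splits as $\tr(w(X))+\tr(w(Y))$, which is \emph{not} compatible with the block-diagonal form of the right-hand side unless the coefficient of every trace monomial vanishes: comparing, for a monomial $t(\tr)\,v$ with $t$ a nonconstant monomial in the traces, the contribution $t(\tr(X\oplus Y))\,v(X\oplus Y)$ expands by multilinearity into cross terms mixing $X$- and $Y$-traces against $v(X)\oplus v(Y)$, producing off-diagonal-scalar multiples of $v(X)$ and $v(Y)$ that cannot be matched. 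Making this comparison precise (by choosing $X,Y$ generic and reading off coefficients of the linearly independent trace monomials — this is the place one must be a little careful, since for \emph{fixed small} $n$ there are trace identities, so the argument is run for $n$ large enough that the relevant trace monomials of degree $\le d$ are linearly independent on $M_n(\F)$) forces $c_\alpha=0$ for all $\alpha$. Hence for all large $n$, $f[n]=P[n]$ with one fixed $P\in\F\X_d$ (resp.\ $\F\Xt_d$).

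\textbf{Step 3: descend to all $n$.} It remains to see $f[n]=P[n]$ for the finitely many remaining small $n$. This follows again from \eqref{eq:oplus2}: for any $n$ and any large $N$, $f[n+N](X\oplus Z)=f[n](X)\oplus f[N](Z)$ and the left side equals $P[n+N](X\oplus Z)=P[n](X)\oplus P[N](Z)$; restricting to the top-left $n\times n$ block gives $f[n](X)=P[n](X)$ for all $X\in M_n(\F)^g$. Finally $\deg P=d$: the degree of $P$ as an element of $\F\X$ (resp.\ $\F\Xt$) is by definition $\max_n\deg P[n]$, which equals $\max_n\deg f[n]=d$ by hypothesis. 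This proves $f\in\F\X_d$ (resp.\ $f\in\F\Xt_d$).

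\textbf{Main obstacle.} The delicate point is Step~2: the clean statement ``a concomitant with the direct-sum property has no trace terms'' is false at any fixed $n$ because of trace identities (e.g.\ $\tr(X)$ versus lower-degree expressions on small matrices), so one must either argue at the level of the free algebra with trace modulo \emph{stable} trace identities, or — equivalently and more concretely — run the coefficient-comparison on $M_n(\F)$ for $n$ large enough that all trace monomials of degree $\le d$ occurring in $f$ are linearly independent there, and only afterwards descend to small $n$ via Step~3. Getting this linear-independence threshold and the multilinearity bookkeeping right is the heart of the proof; everything else is the first fundamental theorem plus the definition of the degree of a free polynomial.
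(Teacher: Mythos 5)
Your proposal takes essentially the same route as the paper: represent each slice $f[n]$ as a trace polynomial via the first fundamental theorem, use \eqref{eq:oplus2} to kill the genuine trace factors in the coefficients, and exploit the absence of trace identities of degree $\le d$ at levels $n>d$ to make the coefficient comparison legitimate, finishing by descending to small $n$ via the direct-sum law. The one place where you are looser than the paper is the heart of Step~2: you assert that the cross-terms in the expansion of $t(\tr(X\oplus Y))\,v(X\oplus Y)$ ``cannot be matched,'' and you presuppose a representative ``uniform in $n$ once $n$ is large,'' which is not a priori given. The paper instead compares $f[2n](X\oplus Y)$ with $f[n](X)\oplus f[n](Y)$ directly, reads off (from linear independence of monomials on $M_n$ for $n>d$) that the pure-trace coefficient $\tr(h_M^{2n}(X\oplus Y))$ must equal both $\tr(h_M^n(X))$ and $\tr(h_M^n(Y))$, and concludes it is a constant --- which both pins down the uniform representative and eliminates the trace terms in one move, with no appeal to a heuristic about unmatched cross-terms. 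Your Step~3 (restriction to the top-left block) makes explicit something the paper leaves implicit, and is a welcome clarification, but it is the same mechanism. Tightening Step~2 to the paper's coefficient-reading argument would close the only soft spot; otherwise the proofs coincide in strategy and in the invariant-theoretic input used.
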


\begin{proof}
Since  $f[n]:M_n(\F)^g\to M_n(\F)$ is a concomitant, it follows by \cite[Theorem 2.1, Theorem 7.2]{Pro} that $f[n]$ is a trace polynomial of  degree $\leq d$ in the variables $x_k$ (resp. $x_k, x_k^t$). 
Since there do not exist nontrivial trace identities for $M_n(\F)$ of degree less than $n$ by \cite[Theorem 4.5, Proposition 8.3]{Pro} (see also \cite{BK,Raz}), we can write $f[n]$ in the case $n\geq d+1$ uniquely as 
$$f[n]=\sum_M \tr(h_M^n)M,$$ 
where $M$ runs over all monomials of degree $\leq d$ and $\deg \tr(h_M^n)+\deg M\leq d$. Choose $n\geq d+1$.
As $f$ is a free map, we have 
\begin{multline*}
\sum_M \tr\big(h_M^{2n}(X\oplus Y)\big)M(X)\oplus \sum_M \tr\big(h_M^{2n}(X\oplus Y)\big)M(Y)=f[2n](X\oplus Y) \\
=f[n](X)\oplus f[n](Y)=\sum_M \tr\big(h_M^{n}(X)\big)M(X)\oplus \sum_M \tr\big(h_M^{n}(Y)\big)M(Y)
.
\end{multline*}
Comparing both sides of the above expression we obtain 
$$\tr\big(h_M^{2n}(X\oplus Y)\big)=\tr\big(h_M^{n}(X)\big)=\tr\big(h_M^{n}(Y)\big)$$
since $M_n(\F)$ does not satisfy a nontrivial trace identity of degree $d$.
Thus, 
$$\tr\big(h_M^{n}(X)\big)=\alpha=\tr\big(h_M^{n}(Y)\big)$$ 
for some $\alpha\in \F$. 
Hence, for every $n>N$, $f[n]\in \GM_n$ (resp. $f[n]\in \GMt$)
is represented by an element $\tilde f\in \F\langle X\rangle$ (resp. $\tilde f\in \F\Xt$) of degree $d$. 
Since $f$ is a free map, we can identify it with a free polynomial in the variables $x_k$ (resp. $x_k,x_k^t$).
\end{proof}

\begin{remark}
We note that Proposition \ref{hompol} holds also if $f$ is only defined on $\cB(0,\delta)$ (cf.~Proposition \ref{simenv}), since polynomial functions that agree on an open subset of $M_n(\F)^g$ represent the same function on $M_n(\F)^g$.
\end{remark}

\subsection{Analytic Free Maps}\label{subsec:analytic}

{We next turn our attention to analytic $G$-free maps. We show they admit unique convergent power series expansions about scalar points $a\in\F^g$,  extending classical results for $G=\GL$, cf.~\cite{Tay,Voc04,Voc10,KV,HKM12}.} {By a translation we may assume without loss of generality that $a=0$.}

\begin{theorem}\label{analit}
Let $\cU$ be a $G$-free set and $f:\cU\to \MF$ an $\F$-analytic $G$-free map, and
let  $\cB(0,\delta)\subseteq \cU$, where $\delta=(\delta_n)_{n\in\N}$, $\delta_n>0$ for every $n\in \N$. 
{Then} there exists a unique formal power series
\beq\label{eq:pw}
F=\sum_{m=0}^\infty \sum _{|w|=m}F_w w,
\eeq
where $w\in \X$ $($resp. $w\in \Xt)$, which converges in norm on $\cB(0,\delta)$, {with} $f(X)=F(X)$ for $X\in \cB(0,\delta)$.  
\end{theorem}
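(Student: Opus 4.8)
The plan is to reduce the analytic case to the polynomial case via a finite-dimensional truncation argument, exploiting Proposition \ref{hompol} together with uniqueness of Taylor expansions.

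\medskip

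\noindent\textbf{Step 1: Extract candidate coefficients from a single large slice.}
Fix $n$ large (to be specified) and consider $f[n]:\cB(0,\delta)[n]\to M_n(\F)$, which is $\F$-analytic, hence has a convergent multivariable Taylor expansion at $0$. Group the terms by total degree to write $f[n]=\sum_{m\ge 0} g_m[n]$, where $g_m[n]:M_n(\F)^g\to M_n(\F)$ is the homogeneous degree-$m$ part (a polynomial map). The first thing I would check is that each truncation $s_d[n]:=\sum_{m=0}^d g_m[n]$ assembles into a $G$-free \emph{polynomial} map $s_d$ of degree $\le d$ as $n$ varies: indeed, the similarity covariance \eqref{eq:conj2} is inherited by each homogeneous Taylor component (differentiate the identity $f[n](\s X\s^{-1})=\s f[n](X)\s^{-1}$ in $X$, or just use uniqueness of the Taylor expansion of both sides), and the direct-sum property \eqref{eq:oplus2}, applied to $X\oplus Y$ and expanded in the scaling parameter $t$ via $f(tX\oplus tY)=f(tX)\oplus f(tY)$, forces $g_m[m'+n](X\oplus Y)=g_m[m'](X)\oplus g_m[n](Y)$ for each $m$. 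Hence $g_m=(g_m[n])_n$ is a homogeneous $G$-free polynomial map, and Proposition \ref{hompol} (in its stated form, for all $n$, or rather the homogeneous-degree-$m$ refinement which the same proof gives) yields a free polynomial $p_m\in\F\X_m$ (resp. $\F\Xt_m$) with $g_m(X)=p_m(X)$ for all $X$. Setting $F_w$ from the coefficients of $p_m$ gives the formal series \eqref{eq:pw}.

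\medskip

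\noindent\textbf{Step 2: Convergence and agreement with $f$ on $\cB(0,\delta)$.}
Now fix $n$ and $X\in\cB(0,\delta)[n]$. By construction $p_m(X)=g_m[n](X)$ is the degree-$m$ homogeneous Taylor part of $f[n]$ at $0$, so $\sum_{m}p_m(X)=\sum_m g_m[n](X)$ is exactly the Taylor series of the analytic function $f[n]$, which converges to $f[n](X)=f(X)$ on the (polydisc-type) domain $\cB(0,\delta)[n]$ where $f[n]$ is analytic. Thus $F(X)=f(X)$ on $\cB(0,\delta)$ and the series converges in norm there. (One subtlety worth a sentence: $\cB(0,\delta)[n]$ is the open norm-ball $\{\|X\|<\delta_n\}$, which is balanced, so analyticity on it does give a globally convergent Taylor expansion in a neighborhood of each point and, by connectedness and the identity theorem, the homogeneous-part decomposition is the genuine power series; alternatively one restricts to $\cB(0,\delta')[n]$ with $\delta'<\delta$ where absolute convergence is uniform.)

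\medskip

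\noindent\textbf{Step 3: Uniqueness.}
If $F=\sum_w F_w w$ is any norm-convergent series with $F(X)=f(X)$ on $\cB(0,\delta)$, then for each fixed $n$ and each $X$ in the open ball $\cB(0,\delta)[n]$ the scalar-entry functions of $\sum_{|w|=m}F_w w(X)$ form the homogeneous degree-$m$ part of a convergent power series in the entries $x_{ij}^{(k)}$ equal to the analytic function $f[n]$; by uniqueness of Taylor coefficients of analytic functions these are determined by $f[n]$, hence by $f$. Letting $n\to\infty$, the coefficient of each word $w$ is pinned down (any nonzero free polynomial is nonzero as a matrix function for $n$ large, e.g. $n\ge\deg w+1$, by the absence of low-degree identities cited in the proof of Proposition \ref{hompol}), so $F$ is unique.

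\medskip

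\noindent\textbf{Main obstacle.}
The crux is Step 1: justifying that the homogeneous Taylor components of the slices $f[n]$ genuinely glue into \emph{free} polynomial maps so that Proposition \ref{hompol} applies uniformly in $m$. The similarity-covariance of each component is routine, but propagating the direct-sum relation \eqref{eq:oplus2} degree-by-degree — and making sure the free polynomial $p_m$ produced by Proposition \ref{hompol} is independent of which large $n$ we used to read off the Taylor data — is where care is needed; this independence is exactly what the ``no trace identities below degree $n$'' input guarantees, and it is the same mechanism that underlies the uniqueness statement.
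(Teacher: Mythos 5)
Your proposal is correct and follows essentially the same route as the paper: decompose each slice $f[n]$ into homogeneous components via the scaling $t\mapsto f[n](tX)$, check that each degree-$m$ component is itself a homogeneous $G$-free polynomial map (similarity covariance plus the direct-sum relation read off degree by degree in $t$), apply Proposition \ref{hompol} to get the free polynomial $p_m$, and derive uniqueness from the absence of low-degree identities on $M_{m+1}(\F)$. The paper packages the uniqueness step as Lemma \ref{matenote} and Proposition \ref{unique}, but the mechanism is exactly the one you describe.
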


\begin{remark}
If $f$ is uniformly bounded, and $G=\GL$ then the convergence of the power series $F$ in \eqref{eq:pw} is uniform, cf.~\cite[Proposition 2.24]{HKM12}, while this conclusion does not hold when $G=\OO$. We present examples in Section \ref{ex}.
\end{remark}

We first prove the existence, the uniqueness will follow from Proposition \ref{unique} below.

\begin{proof}[Proof of the existence]
Since $f$ is analytic, there exists for every $X\in M_n(\F)^g$ a neighbourhood of $0$ such that the function $t\mapsto f[n](tX)$ is defined and analytic in that neighbourhood. 
Hence, $f[n](tX)$ can be expressed in that neighbourhood as a convergent power series of the form $\sum_{m=0}^\infty t^m f[n]_{m}(X)$, 
where $f[n]_{m}(X)$ is a function of $X$. 
Note that for $X\in \cB(0,\delta)$, this power series converges for $t=1$. 
{The} function $f[n]_{m}$ is a homogeneous polynomial function of degree $m$. 
Indeed, 
let $s\in \F$, $X\in M_n(\F)^g$ and choose $\delta'$ such that $tsX\in \cB(0,\delta)$ for $|t|\leq \delta'$. 
Then 
$$\sum_{m=0}^\infty t^m f[n]_{m}(sX)=f(tsX)=\sum_{m=0}^\infty (ts)^mf[n]_{m}(X),$$ 
and thus $f[n]_{m}(sX)=s^m f[n]_{m}(X)$.

Let us show that $f_{m}$ defined by $f_{m}[n]:=f[n]_{m}$ is an analytic free map. 
Choose $\delta'$ such that $tX,tY,\sigma tX\s^{-1}\in \cB(0,\delta)$ for  $|t|<\delta'$.
As $f$ is a free map we have 
\begin{multline*}
\sum_{m=0}^\infty t^m f[n+n']_{m}(X\oplus Y)=f[n+n'](tX\oplus tY)
\\
=f[n](tX)\oplus f[n'](tY)=\sum_{m=0}^\infty t^m \big(f[n]_{m}(X)\oplus f[n']_{m}(Y)\big),
\end{multline*}
and
$$\sum_{m=0}^\infty t^m \s\, f[n]_{m}(X)\, \s^{-1}= \s\, f[n](tX)\, \s^{-1}=f[n](t\s X\s^{-1})=\sum_{m=0}^\infty t^m f[n]_{m}(\s X\s^{-1})$$
for all $|t|<\delta'$, 
which implies that $f_{m}$ is a $G$-free map. 
By construction, $f_{m}$ is a homogeneous polynomial function of degree $m$ (or $0$) for every $m$. 
By Proposition \ref{hompol},  $f_m$ can be represented by a 
free polynomial in the variables $x_k$ (resp. $x_k,x_k^t$) of degree $m$. Thus, $f$ can be expressed as a power series in noncommuting variables, $F=\sum f_{m}$.
By construction, this power series converges on $\cB(0,\delta)$. 
\end{proof}

While the theories of $\GL$- and $\OO$-free maps enjoy certain similarities, 
there are also major differences. 
For instance, for $\GL$-free maps continuity implies analyticity  and there is a very useful formula \cite[Proposition 2.5]{HKM11}, \cite[Theorem 7.2]{KV}
connecting function values with the derivative:
\begin{equation}\label{der}
f
\begin{pmatrix}
X&H\\
0& X
\end{pmatrix}=
\begin{pmatrix}
f(X)&{\delta}f(X)(H)\\
0& f(X)
\end{pmatrix},
\end{equation}
where ${\rm \delta}f(X)(H)$ denotes the G\^{a}teaux 
{(directional)}
derivative of $f$ at $X$ in the direction $H$; i.e., 
\[{\rm \delta}f(X)(H)=\lim_{t\to 0}\frac{f(X+tH)-f(X)}{t}.\]
For $\OO$-free maps continuity does not imply differentiability; see Section \ref{ex} for examples. 
However, for differentiable 
$\OO$-free maps we do have an analog of  formula \eqref{der}, which can be deduced from \cite[Lemma 2.3, Proposition 2.5]{PT}, but we prove it {here} for the sake of completeness. We write $\D f$ for a derivative of $f$, it can be either the G\^ateaux or the Fr\'echet derivative. The Lie bracket $[a,B]$ stands for $([a_,B_1],\dots,[a,B_g])$, where $a\in M_n(\F)$, $B=(B_1,\dots,B_g)\in M_n(\F)^g$.  
\begin{lemma}
Let $f:\cU\to \cM(\F)$ be a real differentiable $G$-free map. 
Then the identity 
\begin{equation}
\D f(X)([a,X])=[a,f(X)]
\end{equation}
holds for all $X\in \cU[n]$, $a^t=-a\in M_n(\RR)$.
In particular, \begin{equation}\label{did}
\D f
\begin{pmatrix}
X_1&0\\
0& X_2
\end{pmatrix}
\begin{pmatrix}
0&X_1-X_2\\
X_1-X_2& 0
\end{pmatrix}=
\begin{pmatrix}
0&f(X_1)-f(X_2)\\
f(X_1)-f(X_2)& 0
\end{pmatrix}.
\end{equation}
\end{lemma}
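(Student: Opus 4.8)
The plan is to reduce the general identity $\D f(X)([a,X])=[a,f(X)]$ to an infinitesimal version of the $G$-similarity equivariance \eqref{eq:conj2}, and then derive the displayed special case \eqref{did} by a judicious choice of $X$ and $a$. First I would fix $X\in\cU[n]$ and a skew-symmetric $a\in M_n(\RR)$, so that $\sigma(s):=\exp(sa)$ is a smooth curve in $\OO_n(\RR)$ (when $G=\OO$) with $\sigma(0)=I$ and $\sigma'(0)=a$; in the case $G=\GL$ one may take any $a\in M_n(\F)$ and the same exponential curve lands in $\GL_n$. Since $\cU$ is a $G$-free set, $\sigma(s)X\sigma(s)^{-1}\in\cU[n]$ for all $s$, and \eqref{eq:conj2} gives the identity of curves
\[
f[n]\big(\sigma(s)X\sigma(s)^{-1}\big)=\sigma(s)\,f[n](X)\,\sigma(s)^{-1}.
\]
Differentiating both sides at $s=0$ using the chain rule, on the left one gets $\D f(X)$ applied to $\frac{d}{ds}\big|_{0}\sigma(s)X\sigma(s)^{-1}=aX-Xa=[a,X]$, and on the right one gets $af(X)-f(X)a=[a,f(X)]$. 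This yields $\D f(X)([a,X])=[a,f(X)]$. The only point needing a word of care is that differentiation of $s\mapsto f[n](\gamma(s))$ at $s=0$ is legitimate: $\gamma(s)=\sigma(s)X\sigma(s)^{-1}$ is a smooth (indeed analytic) curve through $X$, it stays in $\cU[n]$, and $f$ is assumed real differentiable, so the composite is differentiable with derivative $\D f(X)(\gamma'(0))$; here $\D f$ may be read as the Gâteaux or Fréchet derivative without any change to the argument. When $\cU$ is only closed under direct sums and $G$-similarity in the weak sense, one first passes to the similarity-invariant envelope via Proposition \ref{simenv}.

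For the special case \eqref{did}, I would apply the identity just proved on $M_{2n}$ (more precisely $M_{n_1+n_2}$, but writing $n$ for both to match the display) with
\[
X=\begin{pmatrix}X_1&0\\0&X_2\end{pmatrix},\qquad
a=\begin{pmatrix}0&-I\\ I&0\end{pmatrix},
\]
noting that $a^t=-a$. Then a direct block computation gives
\[
[a,X]=aX-Xa=\begin{pmatrix}0&-X_2\\X_1&0\end{pmatrix}-\begin{pmatrix}0&-X_1\\X_2&0\end{pmatrix}=\begin{pmatrix}0&X_1-X_2\\X_1-X_2&0\end{pmatrix},
\]
and since $f$ respects direct sums, $f(X)=f(X_1)\oplus f(X_2)$, so the same computation with $X_i$ replaced by $f(X_i)$ gives
\[
[a,f(X)]=\begin{pmatrix}0&f(X_1)-f(X_2)\\f(X_1)-f(X_2)&0\end{pmatrix}.
\]
Substituting these two expressions into $\D f(X)([a,X])=[a,f(X)]$ yields exactly \eqref{did}.

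The main obstacle, such as it is, is purely a bookkeeping one: making sure the exponential curve $\exp(sa)$ genuinely lies in the relevant group ($\OO_n(\RR)$ requires $a$ skew-symmetric, which is why the hypothesis $a^t=-a$ appears, whereas for $\GL_n$ any $a$ works and the skew-symmetry restriction is harmless) and that the chain rule applies to the composition with $f$, which needs only differentiability of $f$ and smoothness of the conjugation curve. No analyticity of $f$ or any of the deeper invariant-theoretic input from Section \ref{sec2} is needed; equivariance \eqref{eq:conj2} differentiated along a one-parameter subgroup, plus the direct-sum property, is the whole story.
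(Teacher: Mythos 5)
Your proof is correct and takes essentially the same approach as the paper's: differentiate the equivariance identity $f(\sigma(s)X\sigma(s)^{-1})=\sigma(s)f(X)\sigma(s)^{-1}$ along the one-parameter subgroup $\sigma(s)=e^{sa}$ at $s=0$, then specialize to $X=X_1\oplus X_2$ with the block skew-symmetric $a$. The only differences from the paper are immaterial sign conventions in the choice of $a$ and the direction of conjugation.
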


\begin{proof}
Note that $e^{sa}$ is orthogonal for $a^t=-a\in M_n(\RR)$ and $s\in \RR$. 
Thus we have
$$
f(e^{-sa}Xe^{sa})=e^{-sa}f(X)e^{sa}
$$
for every $X\in \cU[n]$. 
Differentating with respect to $s$ at $0$ yields 
$$
\D f(X)([a,X])=[a,f(X)].
$$
Take 
$$a=\begin{pmatrix}
0&I_n\\
-I_n& 0
\end{pmatrix}\in M_{2n}(\RR),$$
where $I_n$ denotes the identity in $M_n(\RR)$. 
Setting 
$X=\begin{pmatrix}
X_1&0\\
0& X_2
\end{pmatrix}$ 
we get the identity \eqref{did}.
\end{proof}

We now show that the power series expansion is unique for a $G$-free function and  give a way to recover its coefficients.

\begin{lemma}\label{matenote}
If $f(X)=\sum _{|w|\leq m}F_w w$, where {the sum is over words}  in the variables $x_k$ (resp. $x_k,x_k^t$),  then we can obtain {the coefficients} $F_w$ by  evaluations of $f$ on $M_{m+1}(\F)$.
\end{lemma}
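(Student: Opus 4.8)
The statement to prove is Lemma~\ref{matenote}: if $f(X)=\sum_{|w|\le m}F_w w$ is a free polynomial of degree $\le m$ in the variables $x_k$ (resp.\ $x_k,x_k^t$), then the coefficients $F_w$ can be recovered from evaluations of $f$ on $M_{m+1}(\F)$. The plan is to exhibit, for each word $w$ with $|w|\le m$, an explicit choice of matrices $X\in M_{m+1}(\F)^g$ whose entries of $f(X)$ read off $F_w$. The standard device is to use nilpotent Jordan-type matrices. Write $w=x_{i_1}x_{i_2}\cdots x_{i_\ell}$ with $\ell=|w|\le m$. On $\F^{m+1}$ with standard basis $e_0,\dots,e_m$, set $X_k$ to be the sum of the matrix units $E_{j,j+1}$ over exactly those indices $j$ ($0\le j\le \ell-1$) for which $i_{j+1}=k$; i.e., $X_k e_{j+1}=e_j$ precisely when the $(j{+}1)$-st letter of $w$ is $x_k$, and $X_k e_j=0$ otherwise. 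Then a direct computation shows $w(X)e_\ell=e_0$ while every other word $u$ of length $\le m$ with $u\ne w$ satisfies $\langle e_0, u(X)e_\ell\rangle=0$: a word $u$ of length $\ell$ maps $e_\ell\mapsto e_0$ only if its letters match the ``ladder'' exactly, and words of length $<\ell$ or $>\ell$ send $e_\ell$ away from $e_0$ entirely (too short: land above $e_0$; applied to $e_\ell$ a short word can only reach $e_{\ell-|u|}\ne e_0$; too long: the ladder runs out and gives $0$). Hence the $(0,\ell)$-entry of $f(X)$ equals $F_w$.

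The one subtlety is the involutive case $G=\OO$: here words $w$ involve both $x_k$ and $x_k^t$, and one must check that the nilpotent construction still separates words, now with the transpose acting as $X_k^t e_j = e_{j+1}$ when $X_k e_{j+1}=e_j$. The cleanest way to handle this uniformly is: given a word $w=u_1u_2\cdots u_\ell$ with each $u_j\in\{x_{k},x_k^t\}$, choose the generic-matrix-like data so that $u_j(X)e_j=e_{j-1}$ for $j=1,\dots,\ell$ and $u_j$ kills all other basis vectors in the relevant range — concretely, if $u_j=x_k$ put the contribution $E_{j-1,j}$ into $X_k$, and if $u_j=x_k^t$ put $E_{j,j-1}$ into $X_k$ (so that $X_k^t$ supplies $E_{j-1,j}$). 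One must take care that for a fixed $k$ the various contributions to $X_k$ don't collide or create unwanted extra paths of length $\le m$; since all contributions to a single $X_k$ (and its transpose) are matrix units $E_{a,b}$ with $|a-b|=1$ lying on the single super/sub-diagonal ladder determined by $w$, the only length-$\le m$ path from $e_\ell$ to $e_0$ through the whole product is the one spelling $w$, and any competing word either mismatches a letter (hitting a zero) or has the wrong length (landing off $e_0$). This is the step that requires the most care and I would write it out as a short explicit lemma about ``monomial-separating'' matrices rather than hand-waving.

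With separation in hand the conclusion is immediate: fix $w$ with $|w|\le m$, take the separating tuple $X^{(w)}\in M_{m+1}(\F)^g$ constructed above, and then
\[
\big(f(X^{(w)})\big)_{0,\ell} \;=\; \sum_{|u|\le m} F_u\,\big(u(X^{(w)})\big)_{0,\ell} \;=\; F_w\,\big(w(X^{(w)})\big)_{0,\ell} \;=\; F_w,
\]
so each coefficient $F_w$ is an explicit matrix entry of $f$ evaluated at a point of $M_{m+1}(\F)^g$. Ranging over all $w$ with $|w|\le m$ recovers all coefficients. I would also remark that $m+1$ is optimal in the sense that degree-$m$ monomials cannot be separated on $M_m(\F)$ (the relevant nilpotent ladder needs $\ell+1\le m+1$ basis vectors), which is why this dimension appears here and dovetails with the ``no trace identities below degree $n$'' input used in Proposition~\ref{hompol}. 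The main obstacle, to reiterate, is purely bookkeeping: making sure the construction in the $*$-case genuinely isolates a single word, with no accidental cancellation coming from transposes contributing to the same generic matrix.
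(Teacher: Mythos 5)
Your construction is the same device the paper uses (a ladder of matrix units $E_{j,j\pm1}$ encoding the word, with the coefficient read off a corner entry), and your argument is complete and correct in the involution-free case, where every $X_k$ is a sum of superdiagonal units and hence a product of $|u|$ of them moves $e_\ell$ down by exactly $|u|$ or kills it. But in the involutive case there is a genuine gap, precisely at the point you flagged. Your claim that a word $u\neq w$ with $\ell<|u|\le m$ gives $\langle e_0,u(X)e_\ell\rangle=0$ because ``the ladder runs out'' is false: once transposes are present, both $E_{j-1,j}$ and $E_{j,j-1}$ occur among the $X_k,X_k^t$, so a longer word can descend to $e_0$, climb back up, and descend again. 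Concretely, take $w=x_1x_1^t$, so $\ell=2$ and $X_1=E_{01}+E_{21}$. Then $X_1X_1^t=E_{00}+E_{02}+E_{20}+E_{22}$ has $(0,2)$-entry $1$ as desired, but $(X_1X_1^t)^2=2\,X_1X_1^t$ has $(0,2)$-entry $2$; so for any $m\ge 4$ the word $u=x_1x_1^tx_1x_1^t$ contaminates the entry you read off, and your formula returns $F_{x_1x_1^t}+2F_{x_1x_1^tx_1x_1^t}+\cdots$ rather than $F_w$. The same phenomenon occurs for every target word of length $\ell<m$ with $\ell\equiv|u|\pmod 2$.

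The paper's proof avoids this by extracting only the \emph{top-degree} coefficients on $M_{m+1}(\F)$: it reads off the $(1,m+1)$-entry, and since each matrix unit in play changes the column index by exactly $\pm1$, reaching column $m+1$ from column $1$ in at most $m$ steps forces both the length ($=m$) and the direction (every step increasing) of the path, so no shorter or wandering word can contribute. The coefficients of $f-\sum_{|w|=m}F_w w$ are then recovered by induction, evaluating on $M_k(\F)\subset M_{m+1}(\F)$ for $k\le m$. Your argument becomes correct if you either adopt this top-degree-plus-induction structure, or first separate the homogeneous components of $f$ (e.g.\ by evaluating at $tX$ for $m+1$ values of $t$ and interpolating), after which only words of length exactly $\ell$ compete and your forcing argument does apply.
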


\begin{proof}
We proceed inductively. 
Assume that we can obtain coefficients of $f(X)=\sum _{|w|\leq k}F_w w$ for $k<m$ by evaluations of $f$ on $M_{k+1}(\F)$. 
The case $k=1$ is trivial. Suppose that $k=m$. 
Let us determine the coefficient at 
 $w=u_{i_1}^{j_1}\cdots u_{i_s}^{j_s}$, where $\sum_{k=1}^s j_k=m$ and $u_{i_k}\in \{x_{i_k},x_{i_k}^t\}$. 
We denote $s_k=\sum_{i=1}^{k} j_i$. 
Setting $a_i=0$ at the beginning, we define a $g$-tuple $(a_i)\in M_{m+1}(\F)^g$ as follows.  
We let $k$ {run}  from $1$ to $s$, and at step $k$ we replace $a_{i_k}$ by
\[
a_{i_k}=
 \begin{cases}
a_{i_k}+\sum_{u=s_{k-1}+1}^ {s_k}e_{u,u+1}&
\textrm{if $u_{i_k}=x_{i_k}$},\\[.1cm]
a_{i_k}+\sum_{u=s_{k-1}+1}^ {s_k}e_{u+1,u}&
\textrm{if $u_{i_k}=x_{i_k}^t$}.
\end{cases}
\]

We shall show that 
$\tr(f(a_1,\dots,a_g)e_{m+1,1})=F_w$.
We need to find the coefficient of $f(a_1,\dots,a_g)$ expressed in {the} standard basis $e_{ij}$, $1\leq i,j\leq m+1$, of $M_{m+1}(\F)$ at $e_{1,m+1}$. 
According to the definition of {the} $a_{i}$'s it suffices to show that $e_{1,m+1}$ can be obtained in only one way as a product of $\leq m$ matrix units from the set $S=\{e_{i,i+1},e_{i+1,i}\mid 1\leq i\leq m\}$. Note that the multiplication on the right of any matrix unit $e_{ij}$ by any  element of $S$ either 
 {increases or decreases} $j$ by $1$. In order to obtain $e_{1,m+1}$ as a product of $\leq m$ elements from $S$, we can thus only choose matrix units which {increase} the second subscript of the preceding matrix unit in the product. Hence, $e_{1,m+1}=e_{12}\cdots e_{m,m+1}$, and any other product of $\leq m$ elements from $S$ {will be different from} $e_{1,m+1}$. As each $e_{i,i+1}$ appears only in one of the $a_i,a_i^t$, $1\leq i\leq g$, the order $e_{12},\dots,e_{m,m+1}$ corresponds to exactly one order of the $a_{i}'s$. By the definition of $a_i$ this order corresponds to $w$. 
Now we can find the coefficients of $f-\sum _{|w|=m}F_w w=\sum _{|w|< m}F_w w$  by the induction hypothesis on $M_{m}(\F)\subset M_{m+1}(\F)$.
\end{proof}

\begin{proposition}\label{unique}
Suppose that a $G$-free map $f$ 
has a power series expansion in a neighbourhood $\cB(0,\delta)$ of $0$, $\delta=(\delta_n)_{n\in\N}$; 
i.e., 
$$f(X)=\sum_{m=0}^\infty\sum _{|w|=m}F_w w(X),$$
for $X\in\cB(0,\delta).$  
Then $F_w$ for $|w|=m$ is determined by the $m$-th derivative of the function $t\mapsto f[m+1](tX)$ at $0$ and hence by its evaluation  on  $M_{m+1}(\F)$.
\end{proposition}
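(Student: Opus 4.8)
The plan is to combine the power series hypothesis with the homogeneity analysis already carried out in the proof of Theorem~\ref{analit} and with the matrix-unit evaluation technique of Lemma~\ref{matenote}. First I would observe that, for fixed $X\in M_{m+1}(\F)^g$ and $t$ small enough that $tX\in\cB(0,\delta)[m+1]$, the hypothesis gives
\[
f[m+1](tX)=\sum_{k=0}^\infty t^k\Big(\sum_{|w|=k}F_w\,w(X)\Big),
\]
a convergent power series in $t$ whose coefficient of $t^k$ is the homogeneous-degree-$k$ part $f_k[m+1](X)=\sum_{|w|=k}F_w w(X)$. By uniqueness of the Taylor coefficients of a one-variable analytic function, $f_k[m+1](X)=\frac{1}{k!}\frac{d^k}{dt^k}\big|_{t=0}f[m+1](tX)$; in particular the full collection $\{F_w:|w|=m\}$ is encoded in the $m$-th derivative of $t\mapsto f[m+1](tX)$ at $0$, as $X$ ranges over $M_{m+1}(\F)^g$. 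This establishes the first assertion.

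For the second assertion I would argue that the truncation $g:=\sum_{k\le m}f_k$ is itself a $G$-free map (each $f_k$ is, by the argument in the proof of Theorem~\ref{analit}, and finite direct sums of $G$-free maps are $G$-free) which on $M_{m+1}(\F)^g$ agrees with a trace/free polynomial of degree $\le m$. Since by Proposition~\ref{hompol} (and the Remark following it, allowing $f$ to be defined only on a neighbourhood of $0$) each $f_k$ with $k\le m$ is represented by a genuine free polynomial $\sum_{|w|=k}F_w w$ in the variables $x_k$ (resp.\ $x_k,x_k^t$), the map $g$ has the form $\sum_{|w|\le m}F_w w$ and hence meets the hypothesis of Lemma~\ref{matenote}. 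Applying that lemma, all coefficients $F_w$ with $|w|\le m$ — in particular those with $|w|=m$ — are recovered by evaluating $g$ on $M_{m+1}(\F)$. Finally I would note that evaluating $g$ on $M_{m+1}(\F)$ is the same as evaluating the homogeneous parts $f_0[m+1],\dots,f_m[m+1]$, each of which is read off from the corresponding derivative of $t\mapsto f[m+1](tX)$ at $0$; so the $F_w$ with $|w|=m$ are determined by evaluations of $f$ on $M_{m+1}(\F)$ through the $m$-th derivative in $t$.

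The one point requiring a little care — and the main (minor) obstacle — is the passage from ``$f[m+1](tX)$ is an analytic function of $t$'' to ``its degree-$\le m$ truncation is globally a free polynomial of degree $\le m$''. One must be sure that the homogeneous components $f_k$ produced from the $t$-expansion are exactly the $G$-free homogeneous pieces to which Proposition~\ref{hompol} applies, and that truncating the (a priori infinite) series at degree $m$ does not spoil the free-map property; both follow from the already-established facts that each $f_k$ is a $G$-free homogeneous polynomial map and that a finite sum of such is again a $G$-free map defined on all of $\cM(\F)^g$. With that in hand the rest is a direct citation of Lemma~\ref{matenote}, and no further computation is needed.
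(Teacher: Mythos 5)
Your proof is correct and follows essentially the same route as the paper: expand $t\mapsto f[m+1](tX)$ in a power series in $t$ to read off $f_m[m+1](X)$ from the $m$-th derivative, and then invoke Lemma~\ref{matenote} to recover the coefficients $F_w$ from evaluations on $M_{m+1}(\F)$. The detour through the truncation $g=\sum_{k\le m}f_k$ and Proposition~\ref{hompol} is redundant (the hypothesis already presents each $f_m=\sum_{|w|=m}F_w w$ as a free polynomial), and the paper instead records explicitly the fact that $M_{m+1}(\F)$ admits no polynomial identity of degree $<m+1$, which pins $f_m$ down from its restriction to $M_{m+1}(\F)$ before applying Lemma~\ref{matenote} to $f_m$ directly.
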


\begin{proof}
Let $|t|<1$, then $tX\in \cB(0,\delta)[n]$ for every $X\in \cB(0,\delta)[n]$, and 
$$f[n](tX)=\sum_{m=0}^\infty t^m f_m[n](X)$$ 
is a convergent power series in $t$, where  $f_{m}$ are homogeneous free polynomials of degree $m$. 
We can thus determine $f_m[n](X)$ as 
$$\frac{1}{m!}\frac{\rm d}{{\rm d}t^m}f[n](tX)\bigg|_{t=0}.$$ 
Since $M_n(\F)$ does not admit a nontrivial polynomial identity (with involution) of degree $<n$ 
(see e.g.~\cite[Lemma 1.4.3, Remark 2.5.14]{Row}), $f_{m}$ is uniquely determined on $M_{m+1}(\F)$.  Hence we can recover $f_{m}$ by the $m$-th derivative of the function $t\mapsto f[m+1](tX)$.  
The coefficients of the polynomial $f_{m}$ can be constructively determined by evaluations on $M_{m+1}(\F)$ by Lemma
\ref{matenote}.
\end{proof}

\section{\RPS and Power Series Expansions about Non-scalar Points}\label{sec4}

{Theorem \ref{analit} gives a convergent power series expansion of a free analytic 
map about a \emph{scalar point} $a\in\F^g$. In this section we present power series expansions
about non-scalar points $A\in M_n(\F)^g$, whose homogeneous components are generalized
polynomials.}  
These are the topic of Subsection \ref{ssec4} and their obtained properties will be used in Subsection \ref{s2sec4} to deduce the desired power series expansion. 
Our methods are algebraic, and work for $G=\GL$ and $G=\OO$.
For $G=\GL$ a similar result has been obtained earlier  in \cite{KV} with a different proof.

Throughout this section $G\in\{\GL,\OO\}$.

\subsection{\RPS}\label{ssec4}
We call the elements of the free product $M_n(\F)\ast \F\X$ {\bf \Rps} (cf. \cite{Ami}, \cite[Section 4.4]{BMM}).
They can be written in the form
$$\sum a_{i_0}x_{k_1}a_{i_1}x_{k_2}\cdots a_{i_{\ell-1}}x_{k_\ell}a_{i_{\ell}},$$
where $a_{i_j}\in M_n(\F)$.
Let $e_{ij}$ denote the  standard matrix units of $M_n(\F)$. Then a basis of  $M_n(\F)\ast \F\X$ consists of monomials
$$ e_{i_0,j_0}x_{k_1}e_{i_1,j_1}x_{k_2}\cdots e_{i_{\ell-1},j_{\ell-1}}x_{k_\ell}e_{i_{\ell},j_{\ell}}$$
for  $\ell\in\N_0$, $I,J\in \{1,\dots,n\}^{\ell+1}$, $K\in \{1,\dots,g\}^\ell$, where $I=(i_0,\dots,i_\ell),J=(j_0,\dots,j_\ell),K=(k_1,\dots,k_\ell)$. 

The algebra  $M_n(\F)\ast \F\X$ can be evaluated (as an algebra with unity) in $M_{ns}(\F)$ for $s\in \N$ and we have an isomorphism
\beq\label{redfun}
{\rm Hom}_{M_n}(M_n(\F)\ast \F\X,M_{ns}(\F))\cong {\rm Hom}(\w_n(\F\X),M_{s}(\F)),
\eeq
where $\w_n$ denotes the matrix reduction functor (see \cite[Section 1.7]{Coh}). 
The isomorphism is a consequence of the identity
\beq\label{redfunid}
M_n(\F)\ast \F\X \cong M_n(\w_n(\F\X)).
\eeq
For the free algebra $\F\X=\F\langle x_1,\dots,x_g\rangle$ we have 
$$\w_n(\F\X)=\F\langle y^{(k)}_{ij}\mid 1\leq i,j\leq n,1\leq k\leq g\rangle,$$
where $y_{ij}^{(k)}$, as the brackets suggest, denote free noncommutative variables. 
For example, the evaluation of the element 
$$e_{11}x_1e_{12}x_2e_{22}\in M_2(\F)\ast\F\X $$
in $M_4(\F)$, defined by mapping $x_1,x_2$ to $A,B\in M_4(\F)$,  is 
$$
\begin{pmatrix}
\multicolumn{2}{c}{\multirow{2}{*}{$\;I_2\;$}}&\multicolumn{2}{c}{\multirow{2}{*}{\quad}}\\
&&&\\
\multicolumn{2}{c}{\multirow{2}{*}{\quad}}&\multicolumn{2}{c}{\multirow{2}{*}{$\; \;$}}\\
&&&\\
\end{pmatrix}
\begin{pmatrix}
\multicolumn{2}{c}{\multirow{2}{*}{$A_{11}$}}&\multicolumn{2}{c}{\multirow{2}{*}{$A_{12}$}}\\
&&&\\
\multicolumn{2}{c}{\multirow{2}{*}{$A_{21}$}}&\multicolumn{2}{c}{\multirow{2}{*}{$A_{22}$}}\\
&&&\\
\end{pmatrix}
\begin{pmatrix}
\multicolumn{2}{c}{\multirow{2}{*}{$\;\;$}}&\multicolumn{2}{c}{\multirow{2}{*}{$\;I_2\;$}}\\
&&&\\
\multicolumn{2}{c}{\multirow{2}{*}{\quad}}&\multicolumn{2}{c}{\multirow{2}{*}{$\;\;$}}\\
&&&\\
\end{pmatrix}
\begin{pmatrix}
\multicolumn{2}{c}{\multirow{2}*{$B_{11}$}}&\multicolumn{2}{c}{\multirow{2}{*}{$B_{12}$}}\\
&&&\\
\multicolumn{2}{c}{\multirow{2}{*}{$B_{21}$}}&\multicolumn{2}{c}{\multirow{2}{*}{$B_{22}$}}\\
&&&\\
\end{pmatrix}
\begin{pmatrix}
\multicolumn{2}{c}{\multirow{2}{*}{$\; \;$}}&\multicolumn{2}{c}{\multirow{2}{*}{\quad}}\\
&&&\\
\multicolumn{2}{c}{\multirow{2}{*}{\quad}}&\multicolumn{2}{c}{\multirow{2}{*}{$\;I_2\;$}}\\
&&&\\
\end{pmatrix}
=
\begin{pmatrix}
&&\multicolumn{2}{c}{\multirow{2}{*}{$A_{11}B_{22}$}}\\
&&&\\
&&&\\
&&&\\
\end{pmatrix} ,
$$
where $I_2$ denotes the identity of $M_2(\F)$, and $A_{ij}$ (resp. $B_{ij}$) denotes the $(i,j)$-block entry of $A$ (resp. $B$),  or 
$$(e_{11}\tnz I_2)A(e_{12}\tnz I_2)B(e_{22}\tnz I_2)=e_{12}\tnz A_{11}B_{22},$$
viewed as en element in $M_2(\F)\tnz M_2(\F)\cong M_4(\F)$.
 
Note that \eqref{redfun} and \eqref{redfunid} imply that no \Rp   vanishes on $M_{ns}(\F)$ for all $s$. In fact, two \Rps of degree $2d$ which agree on $M_{ns}(\F)$ for {some} $s>d$ are equal.  
We denote by ${\rm g}\cT_{ns}$ the ideal of  the elements in $M_n(\F)\ast\F\X$ that vanish when evaluated on $M_{ns}(\F)$ and let 
\[
C_{ns}=\F\big[x_{ij}^{(k)}\mid 1\leq i,j\leq ns,1\leq k\leq g\big].
\] 
The quotient algebra $\GMn=\big(\nA\big)/{\rm g}\cT_{ns}$ is isomorphic to the image of 
$$\phi:\nA\to M_{ns}(C_{ns}),$$ 
defined by mapping $x_k$ to the corresponding generic matrix $(x_{ij}^{(k)}).$ 
We write $\Rns$ for the  subalgebra of $M_{ns}(C_{ns})$ generated by $\GMn$ and traces of the elements in $\GMn$.
Note that every  polynomial map $p:M_{ns}(\F)^g\to M_{ns}(\F)$  can be considered as an element  $\td p\in M_{ns}(C_{ns})$. 

Let  $\GL_{ns}$ act on $M_{ns}(\F)$ by conjugation. 
We will be interested in the action of its subgroup $I_n\tnz \GL_s$.  In the next proposition we describe the invariants and concomitants of this action.

\begin{proposition}\label{partinv}
If $p: M_{ns}(\F)^g\to M_{ns}(\F)$ 
is an $I_n\otimes \GL_s$-concomitant, then $\td p\in \Rns$.
\end{proposition}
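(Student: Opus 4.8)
The plan is to reduce the statement to the first fundamental theorem for $\GL_s$-concomitants applied to $s\times s$ matrices over a coefficient algebra, using the matrix reduction isomorphism \eqref{redfunid}, and then to recognize trace polynomials over that coefficient algebra as elements of $\Rns$. First I would unwind the identification $M_n(\F)\ast\F\X\cong M_n(\w_n(\F\X))=M_n(\F\langle y^{(k)}_{ij}\rangle)$ from \eqref{redfunid}: under it, a polynomial map $p\colon M_{ns}(\F)^g\to M_{ns}(\F)$ that is an $I_n\otimes\GL_s$-concomitant corresponds, block by block, to a polynomial map in the $n^2g$ matrix variables $Y^{(k)}_{ij}\in M_s(\F)$, and the condition of being an $I_n\otimes\GL_s$-concomitant says precisely that $p$ commutes with the simultaneous $\GL_s$-conjugation acting diagonally on all these $M_s$-blocks. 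In other words, each entry of $p$ (viewed as an $s\times s$ matrix over $\F$, after fixing the $n$-block indices) is a $\GL_s$-concomitant in the $n^2g$ generic $s\times s$ matrices $Y^{(k)}_{ij}$.

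Next I would invoke the first fundamental theorem for matrices, \cite[Theorem 2.1]{Pro}, in this enlarged number of variables: every $\GL_s$-concomitant of the tuple $(Y^{(k)}_{ij})$ is a trace polynomial in those variables, i.e.\ a noncommutative polynomial in the $Y^{(k)}_{ij}$ with coefficients that are polynomials in traces $\tr(\text{word in the }Y^{(k)}_{ij})$. Translating back through \eqref{redfunid}, a noncommutative monomial in the $Y^{(k)}_{ij}$ is exactly (a block-entry of) a monomial $e_{i_0j_0}x_{k_1}e_{i_1j_1}\cdots x_{k_\ell}e_{i_\ell j_\ell}$ in $M_n(\F)\ast\F\X$, and a trace $\tr_s(\text{word in }Y^{(k)}_{ij})$ is, up to collecting the $n$-block structure, a trace $\tr_{ns}$ of an element of $M_n(\F)\ast\F\X$ evaluated on generic $ns\times ns$ matrices. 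Concretely: $\tr_{ns}=\tr_n\circ\tr_s$ on $M_n\otimes M_s$, and picking out a fixed diagonal $n$-block is the operation $\tr_s$ of that block; so each generator arising from \cite[Theorem 2.1]{Pro} lies in the algebra generated by $\GMn$ and traces of its elements, which is by definition $\Rns$. Hence $\td p\in\Rns$.

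The main obstacle I anticipate is bookkeeping the two-level block structure carefully: the group acting is $I_n\otimes\GL_s$, not $\GL_{ns}$, so one must be precise that conjugation-invariance under this subgroup corresponds, under $M_{ns}=M_n\otimes M_s$, to $\GL_s$-invariance of each $M_s$-valued block of $p$ \emph{separately}, while the $M_n$-structure is left completely free (it is simply carried along by the $e_{ij}$'s in $M_n(\F)\ast\F\X$). One must also check that the traces produced by the first fundamental theorem, which are $\tr_s$ of words in the $Y^{(k)}_{ij}$, genuinely coincide with $\tr_{ns}$ of the corresponding elements of $\GMn$ rather than some partial trace one cannot express in $\Rns$; this follows from $\tr_{ns}=\tr_n\circ\tr_s$ together with the fact that $\tr_n$ of a product of matrix units $e_{i_0j_0}\cdots e_{i_\ell j_\ell}$ is just the indicator that the word closes up cyclically, so no information is lost. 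Finally one should remark (as the paragraph before \eqref{redfun} already notes) that $p$ being a polynomial map means $\td p$ is a well-defined element of $M_{ns}(C_{ns})$, and that two \Rps agreeing on enough $M_{ns}(\F)$ coincide, so the trace-polynomial representation is unambiguous.
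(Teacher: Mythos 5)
Your argument is correct and follows essentially the same route as the paper: split $M_{ns}(\F)=M_n(\F)\otimes M_s(\F)$, observe that $I_n\otimes\GL_s$ acts only through the $M_s$-factor, and invoke the first fundamental theorem \cite[Theorem 2.1]{Pro} for the resulting $\GL_s$-concomitants. The only difference is cosmetic: the paper first reduces to multilinear maps and works with the tensor space $M_n(\F)^{\otimes d+1}\otimes(M_s(\F)^{\otimes d})^*\otimes M_s(\F)$, whereas you apply the first fundamental theorem directly to the $n^2g$ generic $s\times s$ blocks $Y^{(k)}_{ij}$ via matrix reduction; your identification of $\tr_s$ of words in these blocks with $\tr_{ns}$ of elements of $\GMn$ is exactly what is needed to land in $\Rns$.
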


\begin{proof}
We can assume that $p$ is multilinear of degree $d$. 
Then $p$ corresponds to an element in 
$(M_{ns}(\F)^{\tnz d})^*\tnz M_{ns}(\F)$, which is canonically isomorphic to 
$M_n(\F)^{\tnz {d+1}}\tnz (M_s(\F)^{\tnz d})^*\tnz M_s(\F)$ as $I_n\otimes \GL_s$-module.  
The action of the group $I_n\tnz\GL_s$ reduces to the 
action of $\GL_s$ on $(M_s(\F)^{\tnz d})^*\tnz M_s(\F)$. 
The invariants of this action correspond to multilinear trace polynomials of degree $d$ in $M_s(C_s)$ by \cite[Theorem 2.1]{Pro}. 
Moreover, the elements of the form 
$$\sum_{I,J}e_{i_1j_1}\tnz\cdots\tnz e_{i_dj_d}\tnz \tau_{IJ},$$ 
where $\tau_{IJ}\in (M_s(\F)^{\tnz d})^*\tnz M_s(\F)$ is a $\GL_s$-concomitant map, can be identified with multilinear elements of degree $d$ in $\Rns$. \end{proof}

\subsubsection{Generalized Polynomials with Involution}
To consider the case of algebras with involution we need to introduce some additional notation. 
We call the elements of the algebra  $M_n(\F)\ast\F\Xt$ {\em \Rps with involution}. 
By ${\rm g}\cT^{\dagger}_{ns}$ we denote the ideal of elements in $M_n(\F)\ast\F\Xt$ that vanish on $M_{ns}(\F)$. 
The quotient algebra is isomorphic to the subalgebra $\GMn^\dagger$  of $M_{ns}(C_{ns})$ generated by $\GMn$ and transposes of  elements in $\GMn$.
We write $\Rns^\dagger$ for the  subalgebra of $M_{ns}(C_{ns})$ generated by $\GMn^\dagger$ and traces of elements in $\GMn^\dagger$.

We have the (usual) action of $\OO_{ns}$ on $M_{ns}(C_{ns})$. 
The following proposition is the analog of Proposition \ref{partinv} for the action of $I_n\otimes \OO_s$ on $M_{ns}(C_{ns})$.

\begin{proposition}\label{partinvO}
If $p\in M_{ns}(\F)^g\to M_{ns}(\F)$ is an $I_n\otimes \OO_s$-concomitant, then $\td p\in \Rns^\dagger$.
\end{proposition}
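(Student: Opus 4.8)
The plan is to run the argument of Proposition~\ref{partinv} essentially verbatim, replacing the first fundamental theorem for matrices by its counterpart for matrices \emph{with involution}, \cite[Theorem 7.2]{Pro}. First I would reduce to the case that $p$ is multilinear of degree $d$: since $\td p\in M_{ns}(C_{ns})$, the map $p$ is polynomial, hence a finite sum of multihomogeneous pieces, each of which is again an $I_n\otimes\OO_s$-concomitant and is obtained from a multilinear $I_n\otimes\OO_s$-concomitant by full polarization followed by a diagonal specialization of the variables; as $\Rns^\dagger$ is an algebra stable under such specializations, it suffices to treat multilinear $p$.

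A multilinear $p$ of degree $d$ is an element of $(M_{ns}(\F)^{\otimes d})^{*}\otimes M_{ns}(\F)$. Using $M_{ns}(\F)\cong M_n(\F)\otimes M_s(\F)$ together with the self-duality of $M_n(\F)$ under its trace form, this space is isomorphic, as an $I_n\otimes\OO_s$-module, to
\[
M_n(\F)^{\otimes(d+1)}\otimes\big((M_s(\F)^{\otimes d})^{*}\otimes M_s(\F)\big),
\]
on which $I_n\otimes\OO_s$ acts only through the $\OO_s$-factor, by simultaneous conjugation on $(M_s(\F)^{\otimes d})^{*}\otimes M_s(\F)$. Hence the $I_n\otimes\OO_s$-fixed elements are precisely the tensors $\sum_{I,J}e_{i_1j_1}\otimes\cdots\otimes e_{i_dj_d}\otimes\tau_{IJ}$ with each $\tau_{IJ}\in(M_s(\F)^{\otimes d})^{*}\otimes M_s(\F)$ an $\OO_s$-concomitant.

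By \cite[Theorem 7.2]{Pro}, every such $\OO_s$-concomitant of degree $d$ is a multilinear trace polynomial with involution. Reassembling the $\tau_{IJ}$ with the matrix-unit coefficients $e_{i_kj_k}\in M_n(\F)$ exactly as in the proof of Proposition~\ref{partinv} then identifies $p$ with a multilinear degree-$d$ element of $\Rns^\dagger$, the point being that the generic $ns\times ns$ matrices, their transposes, and the traces of their products generate $\Rns^\dagger$. Therefore $\td p\in\Rns^\dagger$.

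The step I expect to require the most care is organizational rather than conceptual: one must verify that this $I_n\otimes\OO_s$-equivariant rewriting is compatible with the transpose involution, i.e.\ that $I_n\otimes\OO_s\subseteq\OO_{ns}$ (which holds since $(I_n\otimes Q)^{t}(I_n\otimes Q)=I_n\otimes Q^{t}Q=I_{ns}$ for $Q\in\OO_s$), and that the contractions against the orthogonal bilinear form that occur in the involutive first fundamental theorem for $(M_s(\F)^{\otimes d})^{*}\otimes M_s(\F)$ translate, under $M_{ns}(\F)\cong M_n(\F)\otimes M_s(\F)$, into transposition of elements of $\GMn$ inside $M_{ns}(C_{ns})$ — so that transposes appear exactly where they are needed to land in $\GMn^\dagger$, and hence in $\Rns^\dagger$. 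Once this bookkeeping is settled, the argument is formally identical to that of Proposition~\ref{partinv}.
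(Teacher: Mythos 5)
Your proposal is correct and follows essentially the same route as the paper: the paper's proof of Proposition~\ref{partinvO} simply runs the argument of Proposition~\ref{partinv} with \cite[Theorem 2.1]{Pro} replaced by the first fundamental theorem for matrices with involution \cite[Theorem 7.2]{Pro}, which is precisely what you do. Your additional bookkeeping about $I_n\otimes\OO_s\subseteq\OO_{ns}$ and the placement of transposes is a reasonable elaboration of details the paper leaves implicit.
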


\begin{proof}
The proof goes along the same lines as that of Proposition \ref{partinv}, we only need to invoke \cite[Theorem 7.2]{Pro} instead of \cite[Theorem 2.1]{Pro}.
\end{proof}

\subsubsection{Block ad centralizing $G$-concomitants}
Let us denote $\cM_n(\F)^k=\bigcup_s M_{ns}(\F)^k$, $k\in \N$. 
We say that a map $f:\cM_{n}(\F)^g\to \cM_{n}(\F)$ is $I_n\tnz G$-concomitant if 
$$f[ns]:\big(M_{n}(\F)\tnz M_s(\F)\big)^g\to M_{n}(\F)\tnz M_s (\F)$$ 
is a $I_n\tnz G_s$-concomitant for every $s\in \N$. 

\begin{proposition}\label{freeRum}
If $f:\cM_{n}(\F)^g\to \cM_{n}(\F)$ 
is a homogeneous polynomial map of degree $d$ and $I_n\tnz \GL$-concomitant $($resp. $I_n\tnz \OO$-concomitant$)$   
that preserves direct sums, then $f\in \nA$ $($resp. $f\in \nAt)$. 
\end{proposition}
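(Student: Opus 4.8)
The plan is to reduce the statement to Propositions \ref{partinv} and \ref{partinvO} together with the argument of Proposition \ref{hompol}. First I would observe that for each fixed $s$, the slice $f[ns]$ is an $I_n\otimes\GL_s$-concomitant (resp. $I_n\otimes\OO_s$-concomitant) which is a homogeneous polynomial map of degree $d$; hence by Proposition \ref{partinv} (resp. Proposition \ref{partinvO}) it is represented by an element of $\Rns$ (resp. $\Rns^\dagger$), i.e.\ a ``trace generalized polynomial'' of degree $\leq d$ built from the generic matrices over $M_n(\F)$, their (transposes, and) traces. So the content is to eliminate the trace terms: to show that the trace coefficients appearing are in fact scalars, so that $f[ns]$ lands in $\GMn$ (resp. $\GMn^\dagger$), and then that these scalars are independent of $s$ so that $f$ comes from a single element of $\nA$ (resp. $\nAt$).

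The elimination of traces would proceed exactly as in the proof of Proposition \ref{hompol}, but now over the ground ring $M_n(\F)$ and using the non-vanishing statement recorded just before Proposition \ref{partinv}: no nonzero generalized polynomial of degree $2d$ vanishes on $M_{ns}(\F)$ once $s>d$, and in fact two such that agree there are equal. Taking $s>d$, I would write $f[ns]=\sum_M \tr(h^{ns}_M)\,M$, where $M$ runs over the basis monomials $e_{i_0j_0}x_{k_1}\cdots x_{k_\ell}e_{i_\ell j_\ell}$ (resp. with $x^t$'s allowed) of degree $\leq d$ over $M_n(\F)$, with the degree-of-trace plus degree-of-$M$ bounded by $d$. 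Applying the direct-sum compatibility $f[2ns](X\oplus Y)=f[ns](X)\oplus f[ns](Y)$ and comparing coefficients of a fixed $M(X)$ against $M(Y)$, the non-degeneracy forces $\tr(h^{2ns}_M(X\oplus Y))=\tr(h^{ns}_M(X))=\tr(h^{ns}_M(Y))$, whence each $\tr(h^{ns}_M)$ is a constant $\alpha_M\in\F$. Thus $f[ns]$ is represented by an element $\tilde f_s\in\nA$ (resp. $\nAt$) of degree $\leq d$, unique modulo ${\rm g}\cT_{ns}$ (resp. ${\rm g}\cT^\dagger_{ns}$), hence genuinely unique for $s>d$ by the agreement statement.

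It then remains to see that $\tilde f_s$ does not depend on $s$. Here I would again use that $f$ preserves direct sums: for $s,s'>d$, the block embedding $M_{ns}(\F)\oplus M_{ns'}(\F)\hookrightarrow M_{n(s+s')}(\F)$ and the identity $f[n(s+s')](X\oplus Y)=f[ns](X)\oplus f[ns'](Y)$, combined with the uniqueness just established and the fact that a generalized polynomial of degree $\leq d$ is determined by its values on $M_{ns}(\F)$ for any single $s>d$, pin down $\tilde f_{s+s'}$, $\tilde f_s$, $\tilde f_{s'}$ as the same element of $\nA$ (resp. $\nAt$). For the finitely many small $s\leq d$ the value $f[ns]$ is then dictated by this common element as well (it must agree with it on the block-diagonal copies, and one can enlarge $s$ by padding), so $f$ is globally represented by a single generalized polynomial. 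I expect the main obstacle to be bookkeeping the basis-monomial comparison in the direct-sum step cleanly enough that the ``no vanishing of generalized polynomials on $M_{ns}$'' input applies verbatim — in particular making sure the cross-block monomials (those mixing $X$ and $Y$ entries) do not interfere with the comparison — but this is the same mechanism as in Proposition \ref{hompol}, now carried out one level up over $M_n(\F)$.
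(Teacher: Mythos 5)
Your proposal is correct and follows essentially the same route as the paper: reduce to Proposition \ref{partinv} (resp.\ \ref{partinvO}) to land in $\Rns$ (resp.\ $\Rns^\dagger$), use the direct-sum identity together with the absence of low-degree trace identities on $M_s(\F)$ for $s>d$ to force the pure-trace coefficients to be scalars, and then use uniqueness of generalized-polynomial representations on $M_{ns}(\F)$ for $s>d$ to see that all slices come from one element of $\nA$ (resp.\ $\nAt$). The only cosmetic difference is that the paper first multilinearizes and organizes the expansion via the tensor decomposition $M_n(\F)^{\otimes d+1}\otimes(M_s(\F)^{\otimes d})^*\otimes M_s(\F)$, which is equivalent to your basis-monomial bookkeeping.
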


\begin{proof}
We prove the lemma only in the case $G=\GL$, the modifications needed to treat the case $G=\OO$ are minor. 
We can assume that $f$ is multilinear. 
Since $f[ns]$ is a $I_n\tnz \GL_s$-concomitant, $f[ns]\in \Rns$ {by Proposition \ref{partinv}}. 
We can view $f[ns]$ as an element in $M_n(\F)^{\tnz d+1}\tnz (M_s(\F)^{\tnz d})^*\tnz M_s(\F)$ and write it in the form
$$f[ns]=\sum_{I,J}e_{i_1j_1}\tnz\cdots\tnz e_{i_dj_d}\tnz e_{i_{d+1}j_{d+1}}\tnz \tau^{(s)}_{IJ},$$
where $\tau^{(s)}_{IJ}$ is a $\GL_s$-concomitant. 
Let $s>d$. 
Since $f$ preserves direct sums we have
$$f[ns](X)\oplus f[ns](Y)=f[2ns](X\oplus Y).$$ 
We obtain for all $I,J$ an identity 
\beq\label{n2n}
\tau^{(s)}_{IJ}(X)\oplus \tau^{(s)}_{IJ}(Y)=\tau^{(2s)}_{IJ}(X\oplus Y).
\eeq
Let us fix $I,J$. To simplify the notation we write $\tau^{(s)}$ instead of $\tau^{(s)}_{IJ}$. 
We have 
$$\tau^{(s)}=\sum_M h_M^{(s)} M,$$ 
where $h_M$ is a pure trace polynomial, $M$ is a monomial in the variables $x_k$, and $\deg M+\deg h_M=d$. 
Then the identity \eqref{n2n} together with the fact that there are no trace identities of $M_s(\F)$ of degree $<s$ yields
$$h_M^{(s)}(X)=h_M^{(2s)}(X\oplus Y)=h_M^{(s)}(Y)$$ 
for all monomials $M$, which implies that \[\tau^{(s)}=\sum_M\alpha_M M\] for some $\alpha_M\in \F$. 
Thus, $f[ns]\in \GMn$ for every $s>d$ is represented by the same \Rp $\td f$. 
Since $f$ respects direct sums, we can identify it with $\td f$.  
\end{proof}

For a subset $B$ of $M_n(\F)$ we denote by $C(B)$ its {\em centralizer}  in $M_n(\F)$; i.e., 
\[
C(B)=\{c\in M_n(\F)\mid cb=bc \text{ for all $b\in B$}\},
\]
while  $C_{G_n}(B)$ stands for $ C(B)\cap G_n$. 
We say that a map $f: \cM_n(\F)^g\to \cM_n(\F)$ is a {\em \cg} if 
$f[ns]$ is a $(C_{G_n}(B)\tnz M_s(\F))\cap G_{ns}$-concomitant for every $s\in \N$.

\begin{lemma}\label{centGL}
Let $B$ be a subalgebra of $M_n(\F)$. 
If $f: \cM_n(\F)^g\to \cM_n(\F)$ is a homogeneous polynomial map of degree $d$ that is a \cgl, then $f\in C(C(B))\ast \F\X$.
\end{lemma}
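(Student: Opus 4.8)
The plan is to bootstrap $f$ to a genuine \Rp via Proposition \ref{freeRum}, and then to use the extra centralizing symmetry to push the coefficients into $C(C(B))$. First I would note that $I_n\in C(B)$, so $I_n\otimes\GL_s\subseteq(C_{\GL_n}(B)\otimes M_s(\F))\cap\GL_{ns}$; in particular $f$ is an $I_n\otimes\GL$-concomitant. Being moreover a homogeneous polynomial map of degree $d$ that respects direct sums, $f$ is by Proposition \ref{freeRum} represented by a \Rp $\tilde f\in\nA$, homogeneous of degree $d$. Using the monomial basis of $\nA$ I would write $\tilde f=\sum_{|w|=d}\tilde f_w$, where $\tilde f_w$ collects the monomials $e_{i_0j_0}x_{k_1}e_{i_1j_1}\cdots x_{k_d}e_{i_dj_d}$ with fixed word $w=x_{k_1}\cdots x_{k_d}$; then $\tilde f_w$ is encoded by a tensor $T_w=\sum a_0\otimes a_1\otimes\cdots\otimes a_d\in M_n(\F)^{\otimes(d+1)}$, which records the coefficient tuples of the generalized monomials $a_0x_{k_1}a_1\cdots x_{k_d}a_d$ occurring in $\tilde f_w$. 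The goal reduces to proving $T_w\in C(C(B))^{\otimes(d+1)}$ for every $w$: then the coefficients $a_j$ may be chosen in $C(C(B))$, whence $\tilde f\in C(C(B))\ast\F\X$.

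Next I would linearize the equivariance. Fix $m\in\N$ and $g\in(C_{\GL_n}(B)\otimes M_m(\F))\cap\GL_{nm}$, and regard $\tilde f$ as an element of $M_{nm}(\F)\ast\F\X$ via $M_n(\F)\hookrightarrow M_{nm}(\F)$, $a\mapsto a\otimes I_m$. Applying the endomorphism of $M_{nm}(\F)\ast\F\X$ that fixes $M_{nm}(\F)$ and sends $x_k\mapsto gx_kg^{-1}$ produces a \Rp $\tilde f^{\,g}$ whose value on $M_{nmt}(\F)$ is $X\mapsto f[nmt]((g\otimes I_t)X(g\otimes I_t)^{-1})$, while $g\tilde fg^{-1}$ evaluates on $M_{nmt}(\F)$ as $X\mapsto(g\otimes I_t)f[nmt](X)(g\otimes I_t)^{-1}$. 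Since $g\otimes I_t\in(C_{\GL_n}(B)\otimes M_{mt}(\F))\cap\GL_{nmt}$ and $f$ is a \cgl, these two maps agree for every $t\in\N$; as a nonzero \Rp over $M_{nm}(\F)$ cannot vanish on all $M_{nmt}(\F)$, I get $\tilde f^{\,g}=g\tilde fg^{-1}$ in $M_{nm}(\F)\ast\F\X$. Both operations preserve the word grading, so an elementary unwinding of coefficient tuples shows this identity amounts, $w$-component by $w$-component, to $\bar T_w$ commuting with $g^{\otimes(d+1)}:=g\otimes g\otimes\cdots\otimes g\in M_{nm}(\F)^{\otimes(d+1)}$, where $\bar T_w$ is the image of $T_w$ under $a_j\mapsto a_j\otimes I_m$.

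Then I would specialize to $m=d+1$ and $g=\sum_{l=1}^{d+1}c_l\otimes e_{ll}$ with $c_1,\dots,c_{d+1}\in C_{\GL_n}(B)$ (an invertible block-diagonal element, hence in $(C_{\GL_n}(B)\otimes M_{d+1}(\F))\cap\GL_{n(d+1)}$). Under $M_{nm}(\F)^{\otimes(d+1)}\cong M_n(\F)^{\otimes(d+1)}\otimes M_m(\F)^{\otimes(d+1)}$ one has $\bar T_w=T_w\otimes I_m^{\otimes(d+1)}$ and $g^{\otimes(d+1)}=\sum_{l_1,\dots,l_{d+1}}(c_{l_1}\otimes\cdots\otimes c_{l_{d+1}})\otimes(e_{l_1l_1}\otimes\cdots\otimes e_{l_{d+1}l_{d+1}})$; comparing the coefficients of the elementary tensor $e_{11}\otimes e_{22}\otimes\cdots\otimes e_{d+1,d+1}$ in the $M_m(\F)^{\otimes(d+1)}$ factor of the commutation relation yields $(c_1\otimes c_2\otimes\cdots\otimes c_{d+1})T_w=T_w(c_1\otimes c_2\otimes\cdots\otimes c_{d+1})$. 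Letting $c_1,\dots,c_{d+1}$ run independently over $C_{\GL_n}(B)$, which spans $C(B)$ (it is the unit group of the unital algebra $C(B)$, and a finite-dimensional unital $\F$-algebra is the span of its units), I conclude that $T_w$ centralizes $C(B)^{\otimes(d+1)}$; since centralizers distribute over tensor products of unital subalgebras, $C_{M_n(\F)^{\otimes(d+1)}}(C(B)^{\otimes(d+1)})=C(C(B))^{\otimes(d+1)}$, so $T_w\in C(C(B))^{\otimes(d+1)}$ and hence $\tilde f\in C(C(B))\ast\F\X$.

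The main obstacle is the second step: upgrading the pointwise $G$-equivariance of $f$ to an actual identity of generalized polynomials. This is exactly why one must use the hypothesis at every level $nmt$, through the elements $g\otimes I_t$, rather than only at level $nm$, and then invoke the injectivity of evaluation of generalized polynomials on the tower $(M_{nmt}(\F))_t$. The remaining inputs — a unital matrix subalgebra equals the span of its invertible elements, and centralizers distribute over tensor products of subalgebras containing the identity — are elementary, and the tensor bookkeeping in the last two steps, though a bit tedious, is routine.
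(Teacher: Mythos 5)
Your proof is correct, and it shares the paper's overall skeleton: reduce to a \Rp $\tilde f\in\nA$ via Proposition \ref{freeRum}, then exploit the extra equivariance under elements of $(C_{\GL_n}(B)\otimes M_m(\F))\cap\GL_{nm}$ not of the form $I_n\otimes\s$, using the uniqueness of generalized-polynomial representations at levels $>d$ and the fact that $C_{\GL_n}(B)$ spans $C(B)$. The endgame, however, is genuinely different. The paper completes a basis of $C(C(B))$ to a basis of $M_n(\F)$, conjugates by the one-parameter family $\s=\alpha 1\otimes 1+\beta\sum_\ell(c_\ell\otimes e_{\ell,t+\ell}-c_\ell^{-1}\otimes e_{t+\ell,\ell})$, and reads off from the $\alpha\beta$ cross-terms the slot-wise commutator identities $\sum_k\alpha_{I^j_kK}(b_kc_\ell-c_\ell b_k)=0$, which kill the coefficients outside $C(C(B))$ one slot at a time. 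You instead conjugate by the block-diagonal element $\sum_\ell c_\ell\otimes e_{\ell\ell}$ with $d+1$ independent blocks, deduce that the whole coefficient tensor $T_w$ commutes with $c_1\otimes\cdots\otimes c_{d+1}$, and then invoke the commutation theorem $C_{M_n(\F)^{\otimes(d+1)}}\big(C(B)^{\otimes(d+1)}\big)=C(C(B))^{\otimes(d+1)}$ for unital subalgebras. Your route handles all $d+1$ slots at once and is arguably cleaner, at the cost of importing the (elementary, but unproved here) tensor-commutant fact; the paper's adapted-basis argument avoids it. All the auxiliary facts you use check out: $C_{\GL_n}(B)$ is the unit group of $C(B)$ and spans it, the $w$-graded component of $M_{nm}(\F)\ast\F\X$ is $M_{nm}(\F)^{\otimes(d+1)}$ so the upgrade from pointwise equivariance on all $M_{nmt}(\F)$ to the identity $\tilde f^{\,g}=g\tilde fg^{-1}$ of \Rps is legitimate, and the multilinearity of the commutator lets you pass from a spanning set to all of $C(B)^{\otimes(d+1)}$. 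One point common to both arguments: Proposition \ref{freeRum} needs $f$ to preserve direct sums, which is not literally among the hypotheses of the lemma but holds in its application; you correctly flag this assumption where the paper leaves it implicit.
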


\begin{proof}
By Lemma \ref{freeRum}, $f\in M_n(\F)\ast \F\X$. 
Since $\GL_n$ is dense in $M_n(\F)$, the vector space spanned by $C_{\GL_n}(B)$ coincides with $C(B)$. 
Thus we can choose a basis $\{c_1,\dots,c_t\}$ of $C(B)$ with $c_\ell\in \GL_n$. 
Let  $\{b_1,\dots,b_u\}$ be a basis of $C(C(B))$ and complete it to a basis $\{b_\ell\mid1\leq \ell\leq n^2\}$ of $M_n(\F)$. 
We can write $f$ uniquely as
$$f=\sum_{I,K} \alpha_{IK} b_{i_1}x_{k_1} b_{i_2}\cdots x_{k_d} b_{i_{d+1}},$$
where $I$ runs over all $d+1$-tuples of elements in $\{1,\dots,n^2\}$, and $K$ over all $d$-tuples of elements in $\{1,\dots,g\}$. 
Take $s>d$ and evaluate $f$ on $M_{2nts}(\F)\cong M_n(\F)\otimes M_{2t}(\F)\otimes M_s(\F)$. 
Note that $f$ on $M_{2nts}(\F)$ can be identified with  the evaluation of the generalized polynomial 
\[
\sum_{I,K} \alpha_{IK} \Big(\sum_{i=1}^{2t}b_{i_1}\tnz e_{ii}\Big)x_{k_1} \Big(\sum_{i=1}^{2t}b_{i_2}\tnz e_{ii}\Big)\cdots x_{k_d} \Big(\sum_{i=1}^{2t}b_{i_{d+1}}\tnz e_{ii}\Big).
\]
in $M_{2nt}(\F)\ast F\X=(M_n(\F)\tnz M_{2t}(\F))\ast \F\X$, 
and every element in $M_{2nt}(\F)\ast \F\X$ has a unique expression with the matrix coefficients $b_\ell\tnz e_{ij}$, $1\leq i,j\leq 2t$, $1\leq \ell\leq n^2$, on $M_{2nts}(\F)$ as $s>d$. 
Let 
\[
\s=\Big(\alpha1\tnz 1+\beta\sum_{\ell=1}^t(c_\ell\tnz e_{\ell,t+\ell}-c_\ell^{-1}\tnz e_{t+\ell,\ell})\Big)\tnz 1\in (C_{\GL_n}(B)\tnz M_{2t}(\F)\tnz M_s(\F))\cap \GL_{2nts}
\] 
for $\alpha^2+\beta^2=1$, $\alpha,\beta\in \RR$. 
Note that 
\beq\label{centelem}
\s^{-1}=\Big(\alpha1\tnz 1-\beta\sum_{\ell=1}^t(c_\ell\tnz e_{\ell,t+\ell}-c_\ell^{-1}\tnz e_{t+\ell,\ell})\Big)\tnz 1.
\eeq 
Since $f$ is a \cgl  we have 
\[
\sum_{I,K} \alpha_{IK} b_{i_1}^\s x_{k_1} b_{i_2}^\s\cdots x_{k_d} b_{i_{d+1}}^\s
=\sum_{I,K} \alpha_{IK} b_{i_1}x_{k_1} b_{i_2}\cdots x_{k_d} b_{i_{d+1}},
\]
where by a slight abuse of notation $b_{i}$ denotes $b_i\tnz 1\tnz 1$, and 
\begin{multline}\label{eq:no1}
b_i^\s=\s^{-1}b_i\s= \alpha^2b_{i}\tnz 1\tnz 1+\sum_{\ell=1}^t\beta^2c_\ell b_{i}c_\ell^{-1}\tnz e_{\ell\ell}\tnz 1+\beta^2c_\ell^{-1}b_{i}c_\ell\tnz e_{t+\ell,t+\ell}\tnz 1\\
+\alpha\beta( b_{i}c_\ell-c_\ell b_{i})\tnz e_{\ell,t+\ell}\tnz 1-\alpha\beta(b_i c_\ell^{-1}-c_\ell^{-1}b_i)\tnz e_{t+\ell,\ell}\tnz 1.
\end{multline}
Since $s>d$ both sides of equation \eqref{eq:no1} have a unique expression as generalized polynomials in $M_{2tn}\ast \F\X$ 
with the generalized coefficients $b_\ell\tnz e_{ij}$, $1\leq i,j\leq 2t$, $1\leq \ell\leq n^2$.  
We thus derive 
\beq\label{eq:no2}
\sum_k \alpha_{I^j_kK}(b_kc_\ell-c_\ell b_k)=0
\eeq
for every $1\leq j\leq d+1$, $1\leq \ell\leq t$, where $I^j_k$ denotes a tuple of $d+1$-elements in $\{1,\dots,n^2\}$ with $k$ at the $j$-th position. 
Equation \eqref{eq:no2} implies that 
\[
\sum_k  \alpha_{I^j_kK}b_k\in C(C(B)),
\]
 which is by the choice of $b_\ell$, $1\leq \ell\leq n^2$, only possible if $\alpha_{I^j_kK}=0$ for $b_k\not\in C(C(B))$. Therefore we have $f\in C(C(B))\ast \F\X$.
\end{proof}

\begin{lemma}\label{preCentO}
If $B$ is a $*$-subalgebra of $M_n(\RR)$, 
then the subalgebra generated by $C_{\OO_n}(B)$ is equal to $C(B)$, and $C(C_{\OO_n}(B))=C(C(B))=B$.
\end{lemma}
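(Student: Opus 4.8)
The plan is to derive both statements from two structural facts about the real algebra $D:=C(B)$: that it is semisimple, and that it is spanned over $\RR$ by its group of orthogonal elements $D\cap\OO_n$. Once these are in hand, the first claim is immediate and the second follows from the classical double centralizer theorem.

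I would begin by noting that $C(B)$ is again a $*$-subalgebra of $M_n(\RR)$: if $c\in C(B)$ and $b\in B$, then $b^t\in B$, so $cb^t=b^tc$, and transposing gives $bc^t=c^tb$, whence $c^t\in C(B)$. Next, every transpose-closed subalgebra $E\subseteq M_n(\RR)$ is semisimple: its Jacobson radical $J$ is transpose-invariant, so for $a\in J$ the element $aa^t$ lies in $J$ and is therefore nilpotent; being also symmetric and positive semidefinite it must vanish, and then $\tr(aa^t)=\sum_{i,j}a_{ij}^2=0$ forces $a=0$. Hence both $B$ and $D=C(B)$ are semisimple. Since $B$ is semisimple and contains $1_n$, the double centralizer theorem gives $C(C(B))=B$. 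Moreover $C_{\OO_n}(B)=\OO_n\cap D$ is a group (closed under products, under inverses $u^{-1}=u^t$, and containing $1_n$), so the subalgebra it generates equals its $\RR$-linear span. Granting the second structural fact, $\operatorname{span}_\RR\!\big(C_{\OO_n}(B)\big)=D=C(B)$, which is the first assertion; and since the centralizer of a set equals the centralizer of its linear span, $C(C_{\OO_n}(B))=C(C(B))=B$, which is the remaining assertion.

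Thus everything reduces to showing $E:=\operatorname{span}_\RR\!\big(C_{\OO_n}(B)\big)=D$. Clearly $E$ is a unital $*$-subalgebra of $D$, and I would identify it by cutting down along the Wedderburn decomposition $D=\bigoplus_i D_i$. Positivity of $a\mapsto\tr(aa^t)$ forces each minimal central idempotent $z_i$ to be symmetric and forces $t$ to restrict to each simple factor $D_i$ (if $z_i^t=z_j$ for $j\neq i$ then $z_iz_i^t=z_iz_j=0$, impossible as $\tr(z_iz_i^t)>0$); in particular $z_i=\tfrac12\big(1_n+(z_i-\sum_{j\neq i}z_j)\big)$ is a sum of two elements of $D\cap\OO_n$, so $z_i\in E$. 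Since $E$ is an algebra, $z_iE\subseteq D_i$, and taking $u\in C_{\OO_n}(B)$ to be unitary in $D_i$ and the local identity $z_j$ in every other factor shows that $z_iE$ contains the whole unitary group of $(D_i,t)$. By the structure of simple real algebras carrying a positive involution, $D_i\cong M_{n_i}(K_i)$ with $K_i\in\{\RR,\CC,\mathbb{H}\}$ and $t$ the standard conjugate-transpose, and its unitary group spans $M_{n_i}(K_i)$ over $\RR$: by the real (resp.\ complex, quaternionic) singular value decomposition every element factors as $u\Sigma v$ with $u,v$ unitary and $\Sigma$ a nonnegative real diagonal matrix, and any such $\Sigma$ is a real-linear combination of diagonal sign matrices, which are unitary. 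Hence $z_iE=D_i$ for all $i$, and therefore $D=\bigoplus_i D_i\subseteq E$.

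The step I expect to be the main obstacle is this last one — that $D\cap\OO_n$ spans $D$. It is not deep, but it rests on the Wedderburn--Artin structure of $D$ together with the classification of its positive involution (equivalently, the fact that $D$ is a finite-dimensional real $C^*$-algebra), and one must take a little care that the decomposition is compatible with the ambient transpose so that $D\cap\OO_n$ genuinely becomes the product of the classical compact unitary groups, and that the complex and quaternionic factors are handled with the appropriate singular value decompositions rather than the real one.
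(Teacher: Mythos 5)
Your proof is correct, but the way you handle the crux --- that $\operatorname{span}_\RR C_{\OO_n}(B)=C(B)$ --- is genuinely different from the paper's. The paper also reduces to the simple factors of $C(B)$, but then works with the Lie algebra rather than the group: differentiating the one-parameter orthogonal subgroups $\lambda\mapsto e^{\lambda(c^t-c)}\in C_{\OO_n}(B)$ it puts every skew-symmetric element $c^t-c$ of $C(B)$ into $\operatorname{span}\,C_{\OO_n}(B)$, and then invokes \cite[Lemma 2.26]{KMRT}, which says that a finite-dimensional simple real algebra with involution is generated by its skew-symmetric elements unless it is one of $\RR$, $M_2(\RR)$, $\CC$, $M_2(\CC)$; those four exceptional cases are checked by hand. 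You instead show that the full unitary group of each simple factor lies in $z_iE$ and spans the factor over $\RR$ via the singular value decomposition over $\RR$, $\CC$, $\mathbb H$. What each buys: the paper's route is shorter given the citation but forces a case split and only directly yields the \emph{generated subalgebra} (which suffices, since skew elements generate); your route avoids all exceptional cases and gives the linear span directly, at the cost of needing the classification of positive involutions on simple real algebras (equivalently the real $C^*$-structure of a transpose-closed subalgebra of $M_n(\RR)$) and the three SVDs. Your preparatory steps --- $C(B)$ is a $*$-subalgebra hence semisimple via the $\tr(aa^t)$ argument, symmetry of the central idempotents, the observation that $C_{\OO_n}(B)$ is a group so span equals generated subalgebra, and the double centralizer theorem for the unital semisimple $B$ --- all match or make explicit what the paper leaves implicit.
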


\begin{proof}
Since $B$ is a $*$-subalgebra of $M_n(\RR)$, $C(B)$ is also a $*$-subalgebra of $M_n(\RR)$, thus semisimple. 
Notice that in order to show that $\RR\langle C_{\OO_n}(B)\rangle$, the subalgebra of $C(B)$ generated by $C_{\OO_n}(B)$, coincides with $C(B)$, 
we can assume that $C(B)$ is simple. 
We have $c^t-c\in {\rm span}\, C_{\OO_n}(B)$, the vector subspace of $M_n(\RR)$ spanned by $C_{\OO_n}(B)$, for every $c\in C(B)$. 
Indeed, $e^{\lambda (c^t-c)}\in C_{\OO_n}(B)$ for every $\lambda\in \RR$, $c\in C(B)$ yields $c^t-c\in {\rm span}\,C_{\OO_n}(B)$. 
If $C(B)$ 
is isomorphic to  $\RR$, $M_2(\RR)$, $\CC$, or $M_2(\CC)$, where the involution on $\CC$ is the complex conjugation, 
then one can easily verify that ${\rm span}\,C_{O_n}(B)=C(B)$. 
Recall that a finite dimensional 
simple  $\RR$-algebra with involution    
which is not
isomorphic to $\RR$, $M_2(\RR)$, $\CC$, or $M_2(\CC)$ 
coincides with
its subalgebra generated by the skew-symmetric elements (see e.g.~\cite[Lemma 2.26]{KMRT}). 
Therefore $\RR\langle C_{\OO_n}(B)\rangle=C(B)$, which further implies $C(C_{\OO_n}(B))=C(C(B))$, and
the identity $C(C(B))=B$ follows from the double centralizer theorem (see e.g.~\cite[Theorem 1.5]{KMRT}).
\end{proof}

\begin{lemma}\label{centO}
Let $B$ be a $*$-subalgebra of $M_n(\RR)$. 
If $f: \cM_n(\RR)^g\to \cM_n(\RR)$ is a homogeneous polynomial map of degree $d$ that is a \co, then $f\in B\ast \RR\Xt$.
\end{lemma}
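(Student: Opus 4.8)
The plan is to mimic the proof of Lemma \ref{centGL}, replacing the $\GL$-invariant-theoretic input with its orthogonal counterpart. First I would invoke Proposition \ref{freeRum} (in the $G=\OO$ case) to conclude that $f\in M_n(\RR)\ast\RR\Xt$, so that $f$ is an honest generalized polynomial with involution. Reducing to $f$ multilinear of degree $d$ as usual, write $f$ uniquely in terms of a basis of $M_n(\RR)$ adapted to the $*$-structure: by Lemma \ref{preCentO} we have $C(C(B))=B$, so I would pick a basis $\{b_1,\dots,b_u\}$ of $B$ and complete it to a basis $\{b_\ell\mid 1\le\ell\le n^2\}$ of $M_n(\RR)$, and write
\[
f=\sum_{I,K}\alpha_{IK}\,b_{i_0}u_{k_1}b_{i_1}u_{k_2}\cdots u_{k_d}b_{i_d},
\]
where each $u_{k_j}\in\{x_{k_j},x_{k_j}^t\}$ and $I$ ranges over $(d+1)$-tuples from $\{1,\dots,n^2\}$.

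Next, as in Lemma \ref{centGL}, I would evaluate $f$ on a large matrix algebra $M_N(\RR)\cong M_n(\RR)\otimes M_{2t}(\RR)\otimes M_s(\RR)$ with $s>d$, so that the expression of an element of $M_{2nt}(\RR)\ast\RR\Xt$ in terms of the coefficients $b_\ell\otimes e_{ij}$ is unique. By Lemma \ref{preCentO} the subalgebra generated by $C_{\OO_n}(B)$ is all of $C(B)$, so I can choose a basis $\{c_1,\dots,c_t\}$ of $C(B)$ with $c_\ell\in\OO_n$ — here the point is that the $c_\ell$ are \emph{orthogonal}, so $c_\ell^{-1}=c_\ell^t$. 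Using the orthogonal analog of \eqref{centelem}, I would form
\[
\s=\Big(\alpha\,1\otimes 1+\beta\sum_{\ell=1}^t\big(c_\ell\otimes e_{\ell,t+\ell}-c_\ell\otimes e_{t+\ell,\ell}\big)\Big)\otimes 1,
\]
for $\alpha^2+\beta^2=1$; since $c_\ell$ is orthogonal and the $2\times 2$ block $\begin{pmatrix}\alpha&\beta\\-\beta&\alpha\end{pmatrix}$ is orthogonal, $\s$ lies in $(C_{\OO_n}(B)\otimes M_{2t}(\RR)\otimes M_s(\RR))\cap\OO_N$, and $\s^{-1}=\s^t$ has the expected form with $\beta$ negated. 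Then conjugation-invariance of $f$ under $\s$ (which holds because $f$ is a \co) gives, after expanding $b_i^\s=\s^{-1}b_i\s$ exactly as in \eqref{eq:no1}–\eqref{eq:no2} and comparing coefficients of $e_{\ell,t+\ell}$, the relations
\[
\sum_k\alpha_{I^j_kK}\,(b_k c_\ell-c_\ell b_k)=0
\]
for all $j,\ell$. Since the $c_\ell$ span $C(B)$, this says $\sum_k\alpha_{I^j_kK}b_k$ centralizes $C(B)$, i.e.\ lies in $C(C(B))=B$; by the choice of basis this forces $\alpha_{I^j_kK}=0$ whenever $b_k\notin B$. Hence every coefficient $b_{i_j}$ appearing with a nonzero $\alpha_{IK}$ lies in $B$, so $f\in B\ast\RR\Xt$.

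One should be slightly careful about two points, and I expect the main (minor) obstacle to be the first. One: for the one-transpose case one must make sure the variable appearing as $u_{k_j}=x_{k_j}^t$ does not cause trouble — but conjugation by $\s$ commutes with transposition applied to the $x$'s (since $\s$ acts trivially on the $M_s$ factor and the whole setup is $*$-equivariant), so the computation of $b_i^\s$ is unaffected by which of $x_{k_j},x_{k_j}^t$ sits between the coefficients, and the coefficient-comparison argument in $M_{2nt}(\RR)\ast\RR\Xt$ goes through verbatim. Two: one needs $\OO_n$ (rather than just $\GL_n$) to be ``large enough'' to recover $C(B)$ from $C_{\OO_n}(B)$ as a span, and ``large enough'' to make $\s$ orthogonal — both are exactly what Lemma \ref{preCentO} and the orthogonality of the rotation block provide. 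With these observations in place, the proof is a line-by-line transcription of that of Lemma \ref{centGL}, invoking Proposition \ref{freeRum}, Proposition \ref{partinvO}, and Lemma \ref{preCentO} in place of their $\GL$-analogs; I would simply say so rather than repeat the computation.
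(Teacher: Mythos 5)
Your proposal follows essentially the same route as the paper: invoke the $\OO$-case of Proposition \ref{freeRum} to place $f$ in the free product, choose a basis of $C(B)$ consisting of orthogonal matrices (guaranteed by Lemma \ref{preCentO}), conjugate by the rotation-type element $\s$, and compare coefficients in the unique generalized-polynomial expansion to force the coefficients into $C(C_{\OO_n}(B))=B$. The parenthetical observation about $u_{k_j}\in\{x_{k_j},x_{k_j}^t\}$ not interfering with the coefficient comparison is exactly the (only) new wrinkle, and it is handled correctly.

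One genuine slip: your displayed $\s$ is not orthogonal as written. You put
\[
\s=\Big(\alpha\,1\otimes 1+\beta\sum_{\ell=1}^t\big(c_\ell\otimes e_{\ell,t+\ell}-c_\ell\otimes e_{t+\ell,\ell}\big)\Big)\otimes 1,
\]
i.e.\ $c_\ell$ in both off-diagonal slots. With this choice one computes
\[
\s\s^t=(\alpha^2+\beta^2)\,1\otimes1\otimes1+\alpha\beta\sum_{\ell=1}^t(c_\ell-c_\ell^t)\otimes(e_{\ell,t+\ell}-e_{t+\ell,\ell})\otimes1,
\]
which equals the identity only when every $c_\ell$ is symmetric (hence an involution). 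The second occurrence of $c_\ell$ must be $c_\ell^t$ (equivalently $c_\ell^{-1}$, which is the point you yourself flag but then do not use), exactly as in \eqref{centelem'}. With $c_\ell^t$ in place the cross-term cancels and $\s\in\OO_{2nts}$ as required, and the rest of your argument goes through verbatim. Relatedly, the justification ``since $c_\ell$ is orthogonal and the $2\times2$ rotation block is orthogonal, $\s$ is orthogonal'' is too quick even once the typo is fixed: $\s$ is not a pure tensor of a rotation with a single $c$, so one should actually compute $\s\s^t$ (using $c_\ell c_\ell^t=1$) rather than appeal to a product-of-orthogonals heuristic.

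\end{document}
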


\begin{proof}
Since the proof is similar to that of Lemma \ref{centO} we omit some of the details. 
By Proposition \ref{partinvO} we have $f\in M_n(\RR)\ast \RR\Xt$. 
Let $c_1,\dots,c_t$ be a basis of ${\rm span} \,C_{\OO_n}(B)$, the vector space spanned by $C_{\OO_n}(B)$, with $c_\ell\in \OO_n$. 
Let us write
$$f=\sum_{I,K} \alpha_{IK} b_{i_1}u_{k_1} b_{i_2}\cdots u_{k_d} b_{i_{d+1}},$$
where $u_k\in \{x_k,x_k^t\}$. 
Take $s>d$ and evaluate $f$ on $M_{2nts}(\F)$. 
Let 
\[
\s=\Big(\alpha1\tnz 1+\beta\sum_{\ell=1}^t(c_\ell\tnz e_{\ell,t+\ell}-c_\ell^t\tnz e_{t+\ell,\ell})\Big)\tnz 1\in (C_{\OO_n}(B)\tnz M_{2t}(\F)\tnz M_s(\F))\cap \OO_{2nts}
\] 
for $\alpha^2+\beta^2=1$, $\alpha,\beta\in \RR$. 
Note that $\s\in \OO_{2nts}$ and
\beq\label{centelem'}
\s^t=\Big(\alpha1\tnz 1-\beta\sum_{\ell=1}^t(c_\ell\tnz e_{\ell,t+\ell}-c_\ell^t\tnz e_{t+\ell,\ell})\Big)\tnz 1.
\eeq 
Since $f$ is a \co  we have 
\[
\sum_{I,K} \alpha_{IK} b_{i_1}^\s u_{k_1} b_{i_2}^\s\cdots u_{k_d} b_{i_{d+1}}^\s
=\sum_{I,K} \alpha_{IK} b_{i_1}u_{k_1} b_{i_2}\cdots u_{k_d} b_{i_{d+1}},
\]
where $b_{i}$ denotes $b_i\tnz 1\tnz 1$, and 
\begin{multline*}
b_i^\s=\s^{t}b_i\s= \alpha^2b_{i}\tnz 1\tnz 1+\sum_{\ell=1}^t\beta^2c_\ell b_{i}c_\ell^{t}\tnz e_{\ell\ell}\tnz 1+\beta^2c_\ell^{t}b_{i}c_\ell\tnz e_{t+\ell,t+\ell}\tnz 1+\\
+\alpha\beta( b_{i}c_\ell-c_\ell b_{i})\tnz e_{\ell,t+\ell}\tnz 1-\alpha\beta(b_i c_\ell^{t}-c_\ell^{t}b_i)\tnz e_{t+\ell,\ell}\tnz 1.
\end{multline*}
As $s>d$ both sides of the last identity have a unique expression as generalized polynomials in $M_{2tn}\ast \RR\Xt$ 
with the generalized coefficients $b_\ell\tnz e_{ij}$, $1\leq i,j\leq 2t$, $1\leq \ell\leq n^2$.   
Thus, $\alpha_{I^j_kK}=0$ for $b_k\not\in C(C_{\OO_n}(B))$, where $I^j_k$ denotes a tuple of $d+1$-elements in $\{1,\dots,n^2\}$ with $k$ at the $j$-th position. 
Since $C(C_{\OO_n}(B))=B$ by Lemma \ref{preCentO}, $f$ belongs to $B\ast \RR\Xt$.
\end{proof}

\subsection{Power Series Expansions about Non-Scalar Points}\label{s2sec4}

{We next turn to analytic free maps and exhibit
their power series expansions about a non-scalar point $A$. Homogeneous components of such an expansion will be \Rps.} For $G=\GL$ their matrix coefficients belong to the double centralizer $C(C(A))$, while for $G=\OO$ they lie in the $*$-subalgebra $\F\langle A,A^t\rangle$ generated by $A$.

Let us first  introduce  neighbourhoods of non-scalar points. 
Given $A\in M_n(\F)^g$, set
$$\cB(A,\delta)=\bigcup_{s=1}^\infty\big\{X\in M_{ns}(\F)^g\mid \big\|X-\bigoplus_{i=1}^s A\big\|<\delta_s\big\},$$
where $\delta=(\delta_s)_{s\in \N}$, $\delta_s>0$ for every $s\in \N$.

\subsubsection{$GL$-free maps}\label{s1s2sec4}
The next theorem gives a power series expansion of a $\GL$-free map $f$ about $A=(A_1,\dots,A_g)\in M_n(\F)^g$,  whose
matrix coefficients are elements of the double centralizer
algebra $C(C(\F \langle A\rangle))\subseteq M_n(\F)$ of the subalgebra $\F\langle A\rangle$ generated by $A_1,\dots,A_g$.
\begin{theorem}\label{rumanalit}
Let $\cU$ be a $\GL$-free set, $f:\cU\to\cM(\F)$ be an $\F$-analytic $\GL$-free  map, and let  $\cB(A,\delta)\subseteq \cU$, where $A\in M_n(F)^g$, and $\delta=(\delta_s)_{s\in\N}$, $\delta_s>0$ for every $s\in \N$. 
Then there exist unique generalized polynomials  $f_m\in C(C(\F\langle A))\rangle\ast \F\X$  of degree $m$  
so that the formal power series 
\beq\label{eq:ps}
F(X)=\sum_{m=0}^\infty f_m(X-A),
\eeq
converges in norm on the neighbourhood $\cB(A,\delta)$ of $A$  to $f$. 
\end{theorem}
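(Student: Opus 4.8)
The plan is to follow the pattern of the existence part of the proof of Theorem~\ref{analit}, with Lemma~\ref{centGL} in the role that Proposition~\ref{hompol} played there. Put $B=\F\langle A\rangle\subseteq M_n(\F)$. For $X\in\cB(A,\delta)[ns]$ write $X=\big(\bigoplus_{i=1}^s A\big)+H$ with $\|H\|<\delta_s$, and define $\td f:\cM_n(\F)^g\to\cM_n(\F)$, on the set of such $H$, by $\td f[ns](H):=f[ns]\big(\bigoplus_{i=1}^s A+H\big)$. First I would check that $\td f$ inherits enough structure to be fed into Lemma~\ref{centGL}: since $H\mapsto\bigoplus_{i=1}^s A+H$ is affine and $f$ is $\F$-analytic on $\cB(A,\delta)$, the entries of $\td f$ are $\F$-analytic near $0$; since $\big(\bigoplus_{i=1}^{s+s'}A\big)+(H\oplus H')=\big(\bigoplus_{i=1}^{s}A+H\big)\oplus\big(\bigoplus_{i=1}^{s'}A+H'\big)$ and $f$ respects direct sums, so does $\td f$; and, identifying $M_{ns}(\F)$ with $M_n(\F)\tnz M_s(\F)$ so that the block-diagonal point $\bigoplus_{i=1}^s A$ lies in the diagonally embedded copy of $M_n(\F)^g$, that point is centralized by every $\s\in(C_{\GL_n}(B)\tnz M_s(\F))\cap\GL_{ns}$ (its $M_n$-component commutes with each $A_k$), so $\s X\s^{-1}=\bigoplus_{i=1}^s A+\s H\s^{-1}$ and the equivariance of $f$ descends to $\td f(\s H\s^{-1})=\s\,\td f(H)\,\s^{-1}$; thus $\td f$ is a \cgl.

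Next, exactly as in Theorem~\ref{analit}, for fixed $H$ the analytic function $t\mapsto\td f(tH)$ is given near $0$ by a convergent power series $\sum_{m\geq0}t^m f_m(H)$, and a scaling argument as there shows each $f_m$ is a homogeneous polynomial map of degree $m$ (or $0$). The same direct-sum and conjugation manipulations as in Theorem~\ref{analit} show that each $f_m$ again respects direct sums and is a \cgl. Being a homogeneous polynomial map, $f_m$ extends uniquely from its neighbourhood of $0$ to all of $\cM_n(\F)^g$ (polynomial maps agreeing on an open subset of $M_{ns}(\F)^g$ agree on all of it, cf.\ the remark after Proposition~\ref{hompol}). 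Lemma~\ref{centGL} then gives $f_m\in C(C(B))\ast\F\X=C(C(\F\langle A\rangle))\ast\F\X$, a generalized polynomial of degree $m$.

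For convergence, observe that for $X\in\cB(A,\delta)[ns]$ the element $H:=X-\bigoplus_{i=1}^s A$ satisfies $\|H\|<\delta_s$, so $\sum_m t^m f_m(H)$ converges at $t=1$; hence $F(X):=\sum_m f_m(X-A)=\sum_m f_m(H)=\td f(H)=f(X)$ converges in norm on $\cB(A,\delta)$ and agrees with $f$ there. For uniqueness I would argue as in Proposition~\ref{unique}: any such expansion forces $f_m$, as a function, to be the $m$-th Taylor coefficient $\frac{1}{m!}\frac{\mathrm{d}^m}{\mathrm{d}t^m}\td f(tH)\big|_{t=0}$; and a generalized polynomial of degree $m$ is determined by its restriction to $M_{ns}(\F)$ for any $s>m$, since two generalized polynomials of degree $2d$ agreeing on some $M_{ns}(\F)$ with $s>d$ coincide (Subsection~\ref{ssec4}). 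So the $f_m$ are unique, and their coefficients can in fact be pinned down by finitely many such evaluations, in the spirit of Lemma~\ref{matenote}.

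The main obstacle I expect is the first step: choosing the block identification $M_{ns}(\F)\cong M_n(\F)\tnz M_s(\F)$ so that $\bigoplus_{i=1}^sA$ becomes central for $(C_{\GL_n}(B)\tnz M_s(\F))\cap\GL_{ns}$, and then checking that $\td f$ --- and with it each homogeneous part $f_m$ --- is a \cgl that respects direct sums, so that Lemma~\ref{centGL} applies. Once this bookkeeping is done, the homogeneous decomposition, the convergence estimate and the uniqueness argument are the same as those already carried out for Theorem~\ref{analit} and Proposition~\ref{unique}. A minor but necessary point is that $\td f$, and hence each $f_m$, is a priori defined only near $0$, so one must first extend the homogeneous polynomial maps $f_m$ to all of $\cM_n(\F)^g$ before invoking Lemma~\ref{centGL}.
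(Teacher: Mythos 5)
Your proposal is correct and follows essentially the same route as the paper's proof: translate to $A^{\oplus s}$, expand $t\mapsto f(A^{\oplus s}+tH)$ into homogeneous parts, verify each part is a direct-sum-preserving \cgl (using that $A^{\oplus s}$ is centralized by $(C_{\GL_n}(\F\langle A\rangle)\tnz M_s(\F))\cap\GL_{ns}$), apply Lemma~\ref{centGL}, and get convergence at $t=1$ plus uniqueness via derivatives and evaluations on sufficiently large matrices. The only differences are presentational (your explicit introduction of the translated map $\td f$ and the explicit extension of the homogeneous polynomial maps from a neighbourhood of $0$ to all of $\cM_n(\F)^g$, which the paper handles via the remark after Proposition~\ref{hompol}).
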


\begin{proof}
As $A\in \cU[n]$ and $\cU$ is a $\GL$-free set we have 
\[A^{\oplus s}=\bigoplus_{i=1}^s A\in \cU[ns]\] for every $s\in \N$. 
Since $f[ns]$ is analytic in a neighbourhood of $A^{\oplus s}$, the function 
$$t\mapsto f[ns]\Big(A^{\oplus s}+t\big(X-A^{\oplus s}\big)\Big)$$
is defined and analytic for all $|t|<\delta_X$, where $\delta_X$ depends on $X\in M_{ns}(\F)$. 
Thus, we can expand it in a power series
\beq\label{homt} 
f[ns]\Big(A^{\oplus s}+t\big(X-A^{\oplus s}\big)\Big)=\sum_{m=0}^\infty t^mf[ns]_{m}\big(X-A^{\oplus s}\big)
\eeq
that converges for $|t|<\delta_X$. 
If $X\in \cB(A,\delta)$, then we have $\delta_X\geq1$. 
We claim that 
$f[ns]_{m}$ is a homogeneous polynomial function of degree $m$. 
Indeed, as
$$\sum_{m=0}^\infty t_1^mf[ns]_{m}\Big (t_2\big(X-A^{\oplus s}\big)\Big) =f[ns]\Big(A^{\oplus s}+t_1t_2\big(X-A^{\oplus s}\big)\Big)=\sum_{m=0}^\infty t_1^mt_2^mf[ns]_{m}\big(X-A^{\oplus s}\big)$$
for all $t_1$  that satisfy $|t_1|, |t_1t_2|<\delta_X$, we obtain \[f[ns]_{m}(tY)=t^mf[ns]_m(Y)\] for all $t\in \F$, $Y\in M_{ns}(\F)^g$. 
Let us show that 
$$f_{m}:\cM_{n}(\F)^g\to \cM_{n}(\F)$$ 
defined by 
$f_{m}[ns]:=f[ns]_{m}$ 
is a \cgl 
 that preserves direct sums. 
Take $s\in \N$, $\sigma\in (C_{\GL_n}(F\langle A\rangle)\tnz M_s(\F))\cap \GL_{ns}$ 
 and note that 
\[\s A^{\oplus s}\s^{-1}=A^{\oplus s}.\] 
Then the identity 
\begin{eqnarray*}
\sum t^m \s f[ns]_{m}\big(X-A^{\oplus s}\big) \s^{-1}&=& \s f[ns]\Big(A^{\oplus s}+t\big(X-A^{\oplus s}\big)\Big) \s^{-1}\\
&=&f[ns]\Big(A^{\oplus s}+t\big(\s X\s^{-1}-A^{\oplus s}\big)\Big)\\
&=&\sum t^m f[ns]_{m}\Big(\s\big( X-A^{\oplus s}\big)\s^{-1}\Big),
\end{eqnarray*}
for all  small enough $t$
yields the desired conclusion. 

To conclude the proof of the existence we proceed as at the end of the proof of existence in 
 Theorem \ref{analit}. 
Thus, $f_{m}\in C(C(\F\langle A))\rangle\ast \F\X$ 
by Lemma \ref{centGL}. 
Note that setting $t=1$ in \eqref{homt} establishes the existence of the desired power series.

For the uniqueness, we can also follow the proof of uniqueness in Theorem \ref{analit} carried out in Lemma \ref{matenote} and Proposition \ref{unique}, after recalling the identity \eqref{redfun}. 
Hence we can recover $f_m$ by the $m$-th derivative of the function $t\mapsto f[n(m+1)](t(X-A))$ at $0$, and 
the matrix coefficients of the generalized polynomial $f_m$ can be determined by evaluations on $M_{n(m+1)}(\F)$.
\end{proof}

\begin{remark}
If $f$ is a uniformly bounded $\GL$-free map  then the convergence of $F$ in
\eqref{eq:ps} is uniform, which can be proved in the same way as the analogous statement for $\F=\CC$ and power series expansion about scalar points in the last part of the proof of \cite[Proposition 2.24]{HKM12}. 
The only modification needed is to replace $\exp({\mathbbm{i}t})I_{ns},\exp(-\mathbbm{i}mt)I_{ns}\in M_{ns}(\CC)$ in the equation
\[
C\geq \Big\|\frac{1}{2\pi}\int f(\exp(\mathbbm{i}t)X)\exp(-\mathbbm{i}mt)dt\Big\|=\|f^{(m)}(X)\|
\]
 with the corresponding matrices in $M_{2ns}(\RR)$. 
\end{remark}

In general one cannot expect the matrix coefficients of the power series expansion of a $\GL$-free map $f$ about a non-scalar point $A$ to lie in $\F\langle A\rangle\ast\F\X$.  
In this case one would have $f(A)\in \F\langle A\rangle$,  which is not always the case by \cite[Theorem 7.7]{AM2}. 
However, this does hold true in the case that $A$ is a generic point. 
That is, if $g=1$, then $A$ is similar to a diagonal matrix with $n$ distinct eigenvalues, and   
 if $g>1$ then $\F\langle A\rangle=M_n(\F)$.

\begin{corollary}
Let $\cU$ be a $\GL$-free set, $f:\cU\to\cM(\F)$ be an $\F$-analytic $\GL$-free  map, and let  $\cB(A,\delta)\subseteq \cU$, where $A\in M_n(F)^g$ is a generic point, and $\delta=(\delta_s)_{s\in\N}$, $\delta_s>0$ for every $s\in \N$. 
Then there exist generalized polynomials $f_m\in \nA$ of degree $m$  
so that the formal power series 
\[
F(X)=\sum_{m=0}^\infty f_m(X-A),
\]
converges in norm on the neighbourhood $\cB(A,\delta)$ of $A$  to $f$. 
\end{corollary}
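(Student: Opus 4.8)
The plan is to read the conclusion off Theorem \ref{rumanalit} once one observes that a generic point generates a subalgebra of $M_n(\F)$ equal to its own double centralizer. Indeed, Theorem \ref{rumanalit} already produces the power series $F(X)=\sum_{m=0}^\infty f_m(X-A)$, converging in norm on $\cB(A,\delta)$ to $f$, together with unique generalized polynomials $f_m\in C(C(\F\langle A\rangle))\ast\F\X$ of degree $m$. Thus the only thing left to verify is that $C(C(\F\langle A\rangle))=\F\langle A\rangle$ when $A$ is a generic point; since $\F\langle A\rangle\subseteq M_n(\F)$, this in particular puts each $f_m$ in $\nA$ (in fact in the smaller algebra $\F\langle A\rangle\ast\F\X$, matching the discussion preceding the statement).

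To check this I would split along the definition of a generic point. If $g>1$, genericity means $\F\langle A\rangle=M_n(\F)$, which is visibly equal to its double centralizer, and here $\F\langle A\rangle\ast\F\X=\nA$. If $g=1$, then $A$ is similar to a diagonal matrix with $n$ distinct eigenvalues, so its minimal and characteristic polynomials coincide; hence $A$ is non-derogatory and, by the standard description of the centralizer of such a matrix, $C(A)=\F[A]=\F\langle A\rangle$. Therefore $\F\langle A\rangle$ is a commutative, self-centralizing subalgebra of $M_n(\F)$, so $C(C(\F\langle A\rangle))=C(\F\langle A\rangle)=\F\langle A\rangle$. Alternatively, in both cases $\F\langle A\rangle$ is semisimple and one may simply quote the double centralizer theorem in the form used in the proof of Lemma \ref{preCentO}. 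Feeding this back into Theorem \ref{rumanalit} gives $f_m\in\F\langle A\rangle\ast\F\X\subseteq\nA$, and uniqueness of the $f_m$ is inherited verbatim from that theorem.

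I do not expect a genuine obstacle here: the entire content is the elementary fact that a generic tuple generates a subalgebra of $M_n(\F)$ equal to its double centralizer, and the rest is an appeal to Theorem \ref{rumanalit}. The one place deserving a line of care is the case $g=1$ over $\F=\RR$: there ``$n$ distinct eigenvalues'' is understood in the algebraic closure, so the minimal polynomial still equals the characteristic polynomial over $\RR$ and the non-derogatory argument, hence the self-centralization, goes through unchanged.
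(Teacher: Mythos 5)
Your proof is correct and follows the route the paper intends: the corollary carries no separate proof in the text, and is meant as an immediate consequence of Theorem \ref{rumanalit} together with the observation, made in the paragraph preceding the corollary, that a generic $A$ satisfies $C(C(\F\langle A\rangle))=\F\langle A\rangle$. You spell out that observation cleanly in both cases ($g>1$: $\F\langle A\rangle=M_n(\F)$; $g=1$: $A$ non-derogatory, so $C(A)=\F[A]$ is self-centralizing and hence equals its own double centralizer), and your remark that the corollary's stated conclusion $f_m\in\nA$ is really a placeholder for the sharper $f_m\in\F\langle A\rangle\ast\F\X$ promised by the surrounding discussion is exactly the right reading.
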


\subsubsection{$\OO$-free maps}
In the case of free maps with involution the matrix coefficients in the power series expansion of an $\OO$-free map about $A=(A_1,\dots,A_g)\in M_n(\F)^g$ lie in the $*$-subalgebra $\F\langle A,A^t\rangle$ of $M_n(\F)$ generated by $A_1,\dots,A_g$. 
This contrasts the analogous result for $\GL$-free maps (Theorem \ref{rumanalit}) where the double centralizer of $\F\langle A\rangle$ is required. 
\begin{theorem}\label{rumanalitO}
Let $\cU$ be an $\OO$-free set, $f:\cU\to\cM(\F)$ be an $\F$-analytic $\OO$-free  map, and let  $\cB(A,\delta)\subseteq \cU$, where $A\in M_n(F)^g$, and $\delta=(\delta_s)_{s\in\N}$, $\delta_s>0$ for every $s\in \N$. 
Then there exist unique generalized polynomials $f_m\in\F\langle A,A^t\rangle\ast\F\Xt$ of degree $m$  
so that the formal power series 
\[
F(X)=\sum_{m=0}^\infty f_m(X-A),
\]
converges in norm on the neighbourhood $\cB(A,\delta)$ of $A$  to $f$. 
\end{theorem}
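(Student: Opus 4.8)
The plan is to mimic the proof of Theorem~\ref{rumanalit} line by line, substituting the orthogonal group for the general linear group and invoking the involution-flavored auxiliary results developed in Subsection~\ref{ssec4}. Concretely: since $\cU$ is an $\OO$-free set and $A\in\cU[n]$, we have $A^{\oplus s}\in\cU[ns]$ for every $s$, so for each $s$ the map $t\mapsto f[ns]\big(A^{\oplus s}+t(X-A^{\oplus s})\big)$ is analytic on a disk of radius $\delta_X\ge 1$ whenever $X\in\cB(A,\delta)$, and we expand it as $\sum_{m=0}^\infty t^m f[ns]_m(X-A^{\oplus s})$. The homogeneity argument (replacing $X-A^{\oplus s}$ by $t_2(X-A^{\oplus s})$ and comparing coefficients of $t_1$) is identical to the one in Theorem~\ref{rumanalit} and shows that each $f[ns]_m$ is a homogeneous polynomial map of degree $m$.

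Next I would verify that $f_m$, defined by $f_m[ns]:=f[ns]_m$, is a \co\ that preserves direct sums. Direct-sum preservation follows by expanding $f[2ns]\big(A^{\oplus 2s}+t(X\oplus Y-A^{\oplus 2s})\big)$ and matching $t$-powers, exactly as for scalar points in Theorem~\ref{analit}. For the orthogonal equivariance, take $\s\in(C_{\OO_n}(\F\langle A,A^t\rangle)\tnz M_s(\F))\cap\OO_{ns}$; then $\s A^{\oplus s}\s^{t}=A^{\oplus s}$ because $\s$ centralizes $A$ (hence $A^{\oplus s}$), and substituting this into the expansion of $f[ns]$ at $A^{\oplus s}$ and comparing coefficients of $t$ yields $\s f[ns]_m(X-A^{\oplus s})\s^{t}=f[ns]_m(\s(X-A^{\oplus s})\s^{t})$, which is precisely the \co\ condition. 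Now Lemma~\ref{centO}, applied with the $*$-subalgebra $B=\F\langle A,A^t\rangle$ of $M_n(\F)$ (for $\F=\RR$; the $\F=\CC$ case follows by the standard reduction, or by working over $\RR$ as in the rest of the section), gives $f_m\in\F\langle A,A^t\rangle\ast\F\Xt$ of degree $m$. Setting $t=1$ in the expansion of $f[ns]$ establishes that $F(X)=\sum_m f_m(X-A)$ converges in norm on $\cB(A,\delta)$ to $f$.

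For uniqueness, I would follow the template of Lemma~\ref{matenote} and Proposition~\ref{unique}, but in the generalized-polynomial setting: using the isomorphism \eqref{redfun}, a generalized polynomial with involution of degree $m$ (equivalently an element of $M_n(\F)\ast\F\Xt$ of the appropriate degree) is determined by its evaluations on $M_{n(m+1)}(\F)$, since two \Rps\ of degree $2d$ that agree on $M_{ns}(\F)$ for some $s>d$ are equal. Thus $f_m$ is recovered as $\tfrac{1}{m!}\tfrac{\mathrm d^m}{\mathrm dt^m} f[n(m+1)]\big(t(X-A)\big)\big|_{t=0}$, and its matrix coefficients are obtained constructively by the matrix-unit evaluation trick of Lemma~\ref{matenote}, now carried out inside $M_n(\F)\ast\F\Xt$ rather than $\F\Xt$.

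The main obstacle is not the analytic machinery — that is a routine copy of the $\GL$ proof — but making sure the invariant-theoretic input is the \emph{right} one. Specifically, the coefficients land in $\F\langle A,A^t\rangle$ rather than in the double centralizer $C(C(\F\langle A\rangle))$, and this hinges entirely on Lemma~\ref{preCentO}, which identifies $C(C_{\OO_n}(B))=B$ for a $*$-subalgebra $B$: one must check that the relevant algebra appearing in the equivariance condition is genuinely the $*$-subalgebra $\F\langle A,A^t\rangle$ and that Lemma~\ref{centO} applies to it verbatim. A secondary point needing care is the passage from $\F=\RR$ (where the orthogonal-group arguments and Lemma~\ref{preCentO} are stated) to $\F=\CC$; this should be handled exactly as in the surrounding text, but it deserves an explicit remark so the statement is not left ambiguous.
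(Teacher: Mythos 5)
Your proposal matches the paper's proof, which reads simply ``The proof resembles that of Theorem~\ref{rumanalit} with obvious modifications. One only needs to apply Lemma~\ref{centO} instead of Lemma~\ref{centGL}.'' You have correctly identified the $*$-subalgebra $B=\F\langle A,A^t\rangle$ to feed into Lemma~\ref{centO}, verified that $C_{\OO_n}(B)$ centralizes $A^{\oplus s}$ so the equivariance argument transfers, and noted that Lemma~\ref{preCentO} is what makes the coefficients land in $B$ rather than in a double centralizer; this is exactly the content the paper leaves implicit.
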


\begin{proof}
The proof resembles that of Theorem \ref{rumanalit} with obvious modifications. One only  needs to  apply Lemma \ref{centO} instead of Lemma \ref{centGL}.
\end{proof}
\section{Inverse Function Theorem for Free Maps}\label{sec5}

As an application of the tools and techniques developed we present an inverse and implicit function theorem for free maps. 
For $G=\GL$ these results have been obtained by Pascoe \cite{Pas}, Agler and McCarthy \cite{AM2},  
Kaliuzhnyi-Verbovetskyi and Vinnikov (private communication).

 Following \cite{KV} we recall two {topologies  on $\cM(\F)^g$.}
The first  is the {\bf finitely open topology}. Its basis are open sets $U$ such that the intersection of $U$ with $M_n(\F)^g$ is open for every $n\in \N$. The second topology is the {\bf uniformly open topology} and its basis consists of  sets of the form  
$${\mathcal B}(A,r)=\bigcup_{s=1}^\infty\big\{X\in M_{ns}(\F)^g\mid \big\|X-\bigoplus_{i=1}^s A\big\|<r\big\},$$ 
for $A\in M_n(\F)^g$, $n\in \N$, $r\geq 0$.
Further topologies in this free context are considered in \cite{AM1,AM2}.

Let us recall a version of the classical inverse function theorem, giving information on the injectivity domain (see e.g.~\cite[Theorem \rom{14}.1.2]{Lan},  \cite[Theorem 2.5.1]{KrPa}, \cite[Theorem 0.8.3]{KK}). We state it only in the case when $f:\cU\to V$ for $\cU\subset V$, $0$ is in the domain of $f$, $f(0)=0$, ${\rm D}f(0)=\id_V$,  
to which the general case can be reduced by replacing the function $f:\cU\to V$ with the function  $\ol{f}(x)={\rm D}f(x_0)^{-1}(f(x+x_0)-f(x_0))$, if $x_0$ is the point in the domain of $f$. 
Here $\rm D$ denotes the Fr\'echet derivative. We say that $f\in \cC^r$ if all ${\rm D}^kf$, $1\leq k\leq r$,
 exist and are continuous.

\begin{theorem}
\label{IFT}
Let $V$ be a Banach space, $\cU \subset V$ an open set containing $0$, $f:\cU\to V$, and let $f\in  \cC^r$ for some $r\in \N$ $($resp. $f$ is analytic$)$. 
Let ${\rm D}f(0):V\to V$ be a  continuous bijective linear map. 
If ${\rm Ball}(0,2\delta)\subseteq\cU$ and $\|{\rm D}(x-f(x))\|<\frac{1}{2}$  for $\|x\|<2\delta$, then $f$ is injective on ${\rm Ball}(0,\delta)$, and there exists $h:{\rm Ball}(0,\frac{\delta}{2})\to \cV$, where $\cV$ is an open subset of ${\rm Ball}(0,\delta)$, such that $hf=\id_\cV$, $fh=\id_{{\rm Ball}(0,\frac{\delta}{2})}$, and $h\in \cC^r$ $($resp. $h$ is analytic$)$. 
\end{theorem}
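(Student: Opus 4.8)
The statement is the classical inverse function theorem (with quantitative control of the injectivity domain) for a $\cC^r$ or analytic map on a Banach space, so the plan is to reproduce the standard contraction-mapping argument, keeping careful track of the radii. I would first set $g(x)=x-f(x)$, so that the hypothesis reads $\|\D g(x)\|<\tfrac12$ for $\|x\|<2\delta$. The fixed-point scheme is: for $y\in\mathrm{Ball}(0,\tfrac\delta2)$ define $T_y(x)=y+g(x)=y+x-f(x)$; a point $x$ with $f(x)=y$ is exactly a fixed point of $T_y$. Using the mean value inequality on the convex set $\mathrm{Ball}(0,2\delta)$ together with $\|\D g\|<\tfrac12$, one checks $\|g(x)-g(x')\|\le\tfrac12\|x-x'\|$ there; since $g(0)=0$ this gives $\|g(x)\|\le\tfrac12\|x\|<\delta$ for $\|x\|<2\delta$, hence $T_y$ maps $\mathrm{Ball}(0,\delta)$ into itself (as $\|T_y(x)\|\le\|y\|+\|g(x)\|<\tfrac\delta2+\tfrac\delta2=\delta$; a minor adjustment with closed balls handles completeness). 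By the Banach fixed-point theorem $T_y$ has a unique fixed point $h(y)\in\mathrm{Ball}(0,\delta)$, which defines $h:\mathrm{Ball}(0,\tfrac\delta2)\to\mathrm{Ball}(0,\delta)$ with $fh=\id$. Injectivity of $f$ on $\mathrm{Ball}(0,\delta)$ follows from the Lipschitz bound on $g$: if $f(x)=f(x')$ then $x-x'=g(x)-g(x')$, so $\|x-x'\|\le\tfrac12\|x-x'\|$, forcing $x=x'$. Setting $\cV=f^{-1}(\mathrm{Ball}(0,\tfrac\delta2))\cap\mathrm{Ball}(0,\delta)=h(\mathrm{Ball}(0,\tfrac\delta2))$ gives $hf=\id_\cV$ and $fh=\id_{\mathrm{Ball}(0,\tfrac\delta2)}$.

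\textbf{Continuity, openness, and smoothness of $h$.} Next I would show $h$ is continuous: from $h(y)-h(y')=y-y'+g(h(y))-g(h(y'))$ and the Lipschitz estimate, $\|h(y)-h(y')\|\le\|y-y'\|+\tfrac12\|h(y)-h(y')\|$, so $\|h(y)-h(y')\|\le2\|y-y'\|$; in particular $h$ is Lipschitz, hence $\cV=h(\mathrm{Ball}(0,\tfrac\delta2))$ is seen to be open by the usual argument (for $x_0\in\cV$, $f$ restricted near $x_0$ is an open map because $\D f(x_0)=\id-\D g(x_0)$ is invertible by a Neumann series, so $f$ locally has a continuous inverse branch which must agree with $h$). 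For differentiability, $\D f(x)=\id-\D g(x)$ is invertible for every $\|x\|<2\delta$ (Neumann series, since $\|\D g(x)\|<\tfrac12$), and the standard estimate shows $h$ is Fr\'echet differentiable with $\D h(y)=\D f(h(y))^{-1}$. Finally, $\cC^r$-regularity (resp.\ analyticity) of $h$ is bootstrapped: $\D h=(\,\cdot\,)^{-1}\circ\D f\circ h$ is a composition of the $\cC^{r-1}$ (resp.\ analytic) maps $\D f$, $h$, and matrix inversion, so inductively $h\in\cC^r$ (resp.\ $h$ analytic). None of this uses the free or invariant-theoretic structure of the paper; it is purely the Banach-space inverse function theorem, and indeed the statement explicitly cites \cite{Lan,KrPa,KK}, so I would in fact just invoke one of those references rather than reprove it.

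\textbf{Main obstacle.} There is no serious obstacle here — this theorem is standard and is included only to fix notation and the precise shape of the injectivity/definition domains that will be fed into the free inverse function theorems later in Section \ref{sec5}. The only thing requiring a little care is the reduction to the normalized case $f(0)=0$, $\D f(0)=\id$: given a general $f$ with $\D f(x_0)$ invertible, one replaces $f$ by $\ol f(x)=\D f(x_0)^{-1}(f(x+x_0)-f(x_0))$, verifies $\ol f(0)=0$, $\D\ol f(0)=\id$, and then one must check that the hypothesis $\|\D(x-\ol f(x))\|<\tfrac12$ can be arranged on some ball $\mathrm{Ball}(0,2\delta)$ by continuity of $\D f$ at $x_0$ (shrinking $\delta$), and translate the conclusions back to $f$. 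This bookkeeping — tracking how the radii $\delta$, $\tfrac\delta2$, $2\delta$ transform under the affine change of variables and the rescaling by $\D f(x_0)^{-1}$ — is the one place where a sign or a constant could slip, so I would do it explicitly. Everything else is the textbook contraction argument recalled above.
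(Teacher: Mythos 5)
Your proposal is correct: the paper itself gives no proof of Theorem \ref{IFT}, stating it as a recalled classical result and citing \cite{Lan,KrPa,KK}, and your contraction-mapping argument with the map $T_y(x)=y+x-f(x)$ is exactly the standard proof found in those references, with the radii $\delta$, $\tfrac{\delta}{2}$, $2\delta$ tracked correctly. Your closing observation that one would simply invoke the cited references is precisely what the authors do.
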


With a slight abuse of notation, we call a $g'$-tuple of $G$-free maps $f=(f_1,\dots,f_{g'})$, $f_i:\cU \to\cM(\F)$, also a $G$-free map. 
Throughout this section we let $G\in \{\GL,\OO\}$. 

\subsection{Uniformly Open Topology}
In this subsection we work with the uniformly open topology. The Fr\'echet derivative ${\rm D}f$ is  continuous in the uniformly open topology at $A\in M_n(\F)^g$ if for every $\varepsilon>0$ there exists $\delta >0$ such that $\|{\rm D}f(X)-{\rm D}f(A^{\oplus s})\|<\varepsilon$ if $s\in \N$ and $X\in \cB(A,\delta)[ns]$.

\begin{theorem}[Inverse free function theorem]\label{IFFT}
Let $\cU\subset \cM(\F)^g$ be an open $G$-free set containing $0$, $f:\cU\to \cM(\F)^{g'}$ a $G$-free map, and let $f\in \cC^r$  
 for $r\in \N$ $($resp. $f$ analytic$)$, with ${\rm D}f(0)$ invertible as a continuous linear map. 
Then there exist open $G$-free sets $\cW\subset \cM(\F)^{g}$, $\cW'\subset \cM(\F)^{g'}$ containing $0$,  $f(0)$  respectively, and a $G$-free map $h:\cW'\to \cW$ so that
$fh=\id_{\cW'}$, $hf=\id_{\cW}$, and $h\in \cC^{r}$ $($resp. $h$ analytic$)$. 
Moreover,  $h$ is analytic for every $r\in \N$ in the case $G=\GL$. 
\end{theorem}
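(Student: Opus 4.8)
The plan is to reduce the free inverse function theorem to the classical inverse function theorem (Theorem \ref{IFT}) applied level-by-level, and then to verify that the resulting inverse is itself a $G$-free map. First I would note that the hypothesis ${\rm D}f(0)$ invertible as a continuous linear map, together with $f\in\cC^r$ (resp. analytic), lets me pass to the normalization $\bar f(x)={\rm D}f(0)^{-1}(f(x)-f(0))$; since ${\rm D}f(0)$ is given as a single continuous operator on $\cM(\F)^g$ — hence in particular on each level — and $G$-free maps are closed under composition with such equivariant linear isomorphisms, $\bar f$ is again a $G$-free map with $\bar f(0)=0$ and ${\rm D}\bar f(0)=\id$. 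By continuity of ${\rm D}\bar f$ at $0$ pick $\delta>0$ with $\|{\rm D}(x-\bar f(x))\|<\tfrac12$ on ${\rm Ball}(0,2\delta)$; here I must use the \emph{uniformly open} topology, so the condition is imposed simultaneously on all levels $M_{ns}(\F)^g$ via $\cB(0,\delta)$, which is exactly the hypothesis one needs so that the bound passes uniformly down the levels.

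Next I would apply Theorem \ref{IFT} on each $M_n(\F)^g$ separately: it produces, for each $n$, an inverse $h[n]$ on a ball of radius $\delta/2$, of class $\cC^r$ (resp. analytic), with $h[n]\bar f[n]=\id$ and $\bar f[n]h[n]=\id$ on the respective domains. Because the radius $\delta$ in Theorem \ref{IFT} came from the uniform bound, the domains and codomains of the $h[n]$ fit together into free sets $\cW$, $\cW'$; I would take $\cW'=\cB(0,\delta/2)$ (intersected with the image if necessary, using the openness of $\bar f$ near $0$) and $\cW=\bar f^{-1}(\cW')\cap{\rm Ball}$. Then I must check the two defining properties of a $G$-free map for $h=(h[n])_n$. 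For $G$-similarity: if $\sigma\in G_n$ and $Y,\sigma Y\sigma^{-1}\in\cW'[n]$, then setting $X=h[n](Y)$ we have $\bar f[n](\sigma X\sigma^{-1})=\sigma\bar f[n](X)\sigma^{-1}=\sigma Y\sigma^{-1}$, and by injectivity of $\bar f[n]$ on ${\rm Ball}(0,\delta)$ this forces $h[n](\sigma Y\sigma^{-1})=\sigma X\sigma^{-1}=\sigma h[n](Y)\sigma^{-1}$. For direct sums: given $Y_1\in\cW'[m]$, $Y_2\in\cW'[n]$, the point $X_1\oplus X_2$ with $X_i=h(Y_i)$ lies in $\cW[m+n]$, satisfies $\bar f(X_1\oplus X_2)=\bar f(X_1)\oplus\bar f(X_2)=Y_1\oplus Y_2$, and again injectivity of $\bar f[m+n]$ gives $h(Y_1\oplus Y_2)=X_1\oplus X_2=h(Y_1)\oplus h(Y_2)$; one must arrange the radii so that $X_1\oplus X_2$ is genuinely inside the domain where $\bar f[m+n]$ is injective, which is where the uniformity of $\delta$ is used once more. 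Undoing the normalization gives $h$ for $f$ itself, with the same regularity.

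The main obstacle I anticipate is the final clause: $h$ is analytic for every $r\in\N$ when $G=\GL$, i.e.\ $\cC^r$-ness of $f$ (for any single $r$, even $r=1$ if one is careful about what $\cC^1$ buys) already forces the free inverse to be analytic. This should be deduced from the rigidity of $\GL$-free maps: once we know $h$ is a $\GL$-free map that is, say, continuous or $\cC^1$, it is automatically analytic, by the fact (recalled in the text after Theorem \ref{analit}, and available via \cite[Theorem 7.2]{KV}, \cite[Proposition 2.5]{HKM11}) that a locally bounded $\GL$-free map is analytic — continuity of a $\GL$-free map implies analyticity. So the step is: show $h$ is a $\GL$-free map (done above), observe it is continuous (from Theorem \ref{IFT}), and invoke the $\GL$-rigidity to upgrade to analytic; this argument has no analog for $G=\OO$, consistent with the non-rigidity examples promised in Section \ref{ex}. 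A secondary technical point to watch is bookkeeping of the nested radii $2\delta\to\delta\to\delta/2$ across levels so that all the inclusions $\cW\subset\cU$, $\bar f(\cW)\subset\cW'$, and the injectivity domains are respected simultaneously for all $n$; this is routine given the uniform-topology hypothesis but needs to be stated cleanly.
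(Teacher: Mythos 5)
Your outline of the reduction to the classical inverse function theorem, the level-by-level application, and the verification that $h$ respects $\OO$-similarity and direct sums all match the paper's proof. There is, however, a genuine gap in the $G=\GL$ part, and it is precisely the point the paper has to work around.

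You argue the $\GL$-equivariance of $h$ by taking $\sigma\in\GL_n$, $Y,\sigma Y\sigma^{-1}\in\cW'[n]$, setting $X=h(Y)$, computing $\bar f(\sigma X\sigma^{-1})=\sigma Y\sigma^{-1}$, and concluding $h(\sigma Y\sigma^{-1})=\sigma X\sigma^{-1}$ ``by injectivity of $\bar f$ on ${\rm Ball}(0,\delta)$.'' But injectivity only lets you cancel when \emph{both} candidate preimages lie in ${\rm Ball}(0,\delta)$, and $\sigma X\sigma^{-1}$ need not: $\GL_n$-conjugation does not preserve the operator norm, so $\|\sigma X\sigma^{-1}\|$ can exceed $\delta$ even when $\|\sigma Y\sigma^{-1}\|<\delta/2$. (This is exactly why the analogous step is clean for $G=\OO$ and fails to close for $G=\GL$.) The paper repairs this by first establishing, for $G=\GL$, that $f$ (and hence $h$, via the analytic classical inverse function theorem) is analytic --- using $\GL$-rigidity on $f$, plus a separate uniform-boundedness argument for $\F=\RR$ --- then expanding the $\OO$-free map $h$ in a power series about $0$ via Theorem~\ref{analit}, and exploiting the local scaling $h(\sigma tY\sigma^{-1})=\sigma h(tY)\sigma^{-1}$ for $|t|$ small (so that $\sigma h(tY)\sigma^{-1}$ does stay in $\cB(0,\delta)$) to conclude that every homogeneous part $h_m$ is a $\GL$-concomitant, hence involves only $x$ and not $x^t$. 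Only then is $h$ a $\GL$-free map, and the open $\GL$-free sets $\cW,\cW'$ are produced by passing to the $\GL$-similarity invariant envelopes of $\cV$ and $\cB(0,\delta/2)$ (which you also do not address: a ball is not $\GL$-free).

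Your proposed order for the final clause --- first show $h$ is $\GL$-free, then upgrade via rigidity of $\GL$-free maps --- is therefore circular in this proof: in the paper's argument the $\GL$-freeness of $h$ is itself a consequence of analyticity, not something one verifies beforehand. The cleaner route (the one the paper takes) is to apply rigidity to $f$ rather than to $h$, deduce $f$ analytic, get $h$ analytic from the classical analytic inverse function theorem, and only then establish that $h$ is $\GL$-free via its power series. You should also note that the rigidity input ``continuous $\GL$-free implies analytic'' is immediate only for $\F=\CC$; for $\F=\RR$ the paper needs the additional observation that $f$ is uniformly bounded on $\cB(0,\delta)$ and then appeals to the corresponding result in the real case.
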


\begin{proof}
Since $\cM(\F)^g$ is not a Banach space we cannot directly apply Theorem \ref{IFT}. 
However, we can use it levelwise. 
Without loss of generality we can assume that $g=g'$, $f(0)=0$ and ${\rm D}f(0)=\id_{\cM(\F)^g}$ by replacing $f$ with the function 
$$\ol f:\cM(\F)^g\to \cM(\F)^g,\quad \ol f={\rm D}f(0)^{-1}(f-f(0)).$$ 
As ${\rm D}f$ is continuous on $\cU$ and invertible at $0$ with a continuous inverse in the uniformly open topology, 
there exists (by the definition of the topology) $\delta>0$ such that $\cB(0,2\delta)\subseteq \cU$ and $\|{\rm D}(x-f(x))\|<\frac{1}{2}$ for $\|x\|<2\delta$.
 Theorem \ref{IFT} therefore implies that $f$ is injective on $\cB(0,\delta)$, and 
provides
 a $\cC^r$-map $h:\cB(0,\frac{\delta}{2})\to \cV$, where $\cV$ is an open subset of $\cB(0,\delta)$, that satisfies the desired identities. 

Let us first show that $\cV$ is an $\OO$-free set and $h$ is an $\OO$-free map. 
Let $u\in \OO_n$, $Y\in \cB(0,\frac{\delta}{2})[n]$. 
As $uYu^t\in \cB(0,\frac{\delta}{2})[n]$ and $f$ is a $G$-free map we have 
\beq\label{Ukon}
f\big(h(uYu^t)\big)=uYu^t=uf\big(h(Y)\big)u^t=f\big(uh(Y)u^t\big).
\eeq
Since $uh(Y)u^t\subset u\cV u^t\subset \cB(0,\delta)$ and $f$ is injective on $\cB(0,\delta)$, $h$ respects $\OO$-similarity. 
In the same way one can show that $h$ respects direct sums, so it is indeed an $\OO$-free map. 
In consequence, $\cV=h(\cB(0,\frac{\delta}{2}))$ is an $\OO$-free set. 
Thus, in the case $G=\OO$, the proposition follows. 

It remains to consider the case $G=\GL$. 
We claim that $h$ is analytic in this case. In the case $\F=\CC$, $f$ is analytic (see \cite[Proposition 2.5]{HKM11} or \cite[Theorem 7.2]{KV}). 
Our assumptions imply that $f$ is (uniformly) bounded in 
 $\cB(0,\delta)$, therefore we can apply \cite[Theorem 7.23, Remark 7.35]{KV} to deduce that $f$ is analytic also in the case $\F=\RR$. Thus, $h$ is analytic by Theorem \ref{IFT}. 
Since $h$ is an $\OO$-free map according to the previous paragraph, it can be expanded in a power series \eqref{eq:pw} in $x,x^t$ about $0$ by Theorem \ref{analit}, which converges in $\cB(0,\frac{\delta}{2})$. 
 Note that \eqref{Ukon} holds also if we replace $u,u^t$ by $\s,\s^{-1}$ respectively, for $\s\in \GL_n$ such that 
$\s Y\s^{-1}\in \cB(0,\frac{\delta}{2})$, $\s h(Y)\s^{-1}\in \cB(0,{\delta})$. 
 Note that for every $Y\in \cB(0,\frac{\delta}{2})$ there exists $\delta_\s>0$, 
such that $t\s Y\s^{-1}\in \cB(0,\frac{\delta}{2}),\s h(tY)\s^{-1}\in \cB(0,\delta)$ for every $|t|<\delta_\s$. Thus,
\[
h(\s tY\s^{-1})=\s h(tY)\s^{-1}
\]
for every $|t|<\delta_\s$. Writing this identity as a power series in $t$, we can deduce that each homogeneous part $h_m$ of the power series $H$ of $h$ is a $\GL$-concomitant. Thus, $H$ is a power series in $x$, and $h$ is a $\GL$-free map on $\cB(0,\frac{\delta}{2})$. 
Now notice that the $\GL$-similarity invariant envelopes 
\[
\cW=\wtd\cV,\quad\cW'=\wtd{\cB(0,\frac{\delta}{2})}
\]
 are open sets since the function $X\mapsto \s X\s^{-1}$ is an (analytic) isomorphism. 
As $\cU$ is a $G$-free set, $\cW$ is contained in $\cU$.  
Furthermore, $\td h$  (cf. Proposition \ref{simenv}) maps $\cW'$ to $\cW$. 
Thus, we only need to check that $ f$ and $\td h$ satisfy the desired identities. 
Let $\td X=\s X\s^{-1}\in \cW$, where $X\in \cV[n], \s\in \GL_n$. Then 
$$\td h\Big(f\big(\s X\s^{-1}\big)\Big)=\td h\big(\s f(X)\s^{-1}\big)=\s h(f(X))\s^{-1}=\s X\s^{-1}$$
implies that $\td h f=\id_{\cW}$. The identity $f\td h=\id_{\cW'}$ can be checked similarly. 
\end{proof}

The proof  used in the classical setting to derive the implicit function theorem from the inverse function theorem can be also
utilized in the free setting. Thus, we obtain an implicit free function theorem. 
We denote by ${\rm D}_2f(a,b)$, where  $f:\cU\times \cV\to \cW$, and $(a,b)\in \cU\times \cV$,  the Fr\'echet derivative of the function $y\mapsto f(a,y)$ evaluated at $b$.

\begin{corollary}[Implicit free function theorem]\label{IFFT2}
Let $\cU_1\times \cU_2\subseteq \cM(\F)^g\times \cM(\F)^{g'}$ be an open $G$-free set, $f:\cU_1\times \cU_2\to \cM(\F)^{g'}$ a $G$-free map, and let $f\in \cC^r$  
for some $r\in \N$, with ${\rm D_2}f(0,0)$ invertible. 
There exist an open $G$-free set $\cV_1\times\cV_2$ containing $(0,0)$, and a $G$-free map $h:\cV_1\to \cV_2 $, $h\in\cC^r$, such that
$f(x,y)=0$ for $(x,y)\in \cV_1\times \cV_2$ if and only if $y=h(x)$. 
\end{corollary}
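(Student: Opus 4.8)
The plan is to reduce the implicit free function theorem to the already-established inverse free function theorem, Theorem \ref{IFFT}, exactly as one does in the classical setting, while being careful that at every stage the maps remain $G$-free maps on $G$-free sets. First I would introduce the auxiliary map
\[
\Phi:\cU_1\times\cU_2\to\cM(\F)^{g}\times\cM(\F)^{g'},\qquad \Phi(x,y)=(x,f(x,y)).
\]
One checks immediately that $\Phi$ is a $G$-free map: it respects $G$-similarity and direct sums because $f$ does and because the first coordinate is the identity. Moreover $\Phi\in\cC^r$ since $f$ is, and $\Phi(0,0)=0$. The Fr\'echet derivative ${\rm D}\Phi(0,0)$ has block lower-triangular form $\begin{pmatrix}\id & 0\\ {\rm D}_1f(0,0) & {\rm D}_2f(0,0)\end{pmatrix}$, which is invertible as a continuous linear map precisely because ${\rm D}_2f(0,0)$ is invertible by hypothesis. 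Hence Theorem \ref{IFFT} applies to $\Phi$.

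Next I would invoke Theorem \ref{IFFT} to produce open $G$-free sets $\cW\subseteq\cM(\F)^{g}\times\cM(\F)^{g'}$ and $\cW'\subseteq\cM(\F)^{g}\times\cM(\F)^{g'}$, both containing $0$, together with a $G$-free map $\Psi:\cW'\to\cW$, $\Psi\in\cC^r$, with $\Phi\Psi=\id_{\cW'}$ and $\Psi\Phi=\id_{\cW}$. Writing $\Psi(u,v)=(\Psi_1(u,v),\Psi_2(u,v))$ in components, the identity $\Phi\Psi=\id$ forces $\Psi_1(u,v)=u$, so $\Psi(u,v)=(u,\Psi_2(u,v))$ and $f(u,\Psi_2(u,v))=v$ for all $(u,v)\in\cW'$. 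Now I would set $v=0$: choose open $G$-free sets $\cV_1\subseteq\cM(\F)^{g}$ and $\cV_2\subseteq\cM(\F)^{g'}$ containing $0$ so that $\cV_1\times\{0\}\subseteq\cW'$ and $\cV_1\times\cV_2\subseteq\cW$ (here one shrinks using the basis of the relevant topology, and one can arrange $\cV_1,\cV_2$ to be $G$-free since the construction only involves norm balls around direct sums of a base point, which are automatically closed under $G$-similarity and direct sums in the uniformly open topology setting), and define
\[
h:\cV_1\to\cV_2,\qquad h(x)=\Psi_2(x,0).
\]
Then $h$ is a $G$-free map because $x\mapsto(x,0)$ and $\Psi_2$ are, it lies in $\cC^r$, and $f(x,h(x))=0$ for all $x\in\cV_1$.

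Finally I would verify the equivalence. If $y=h(x)$ with $x\in\cV_1$ then $f(x,y)=f(x,h(x))=0$ by the previous paragraph. Conversely, suppose $(x,y)\in\cV_1\times\cV_2$ with $f(x,y)=0$. Then $\Phi(x,y)=(x,0)\in\cW'$, so applying $\Psi$ gives $(x,y)=\Psi\Phi(x,y)=\Psi(x,0)=(x,\Psi_2(x,0))=(x,h(x))$, whence $y=h(x)$. This uses the injectivity of $\Phi$ on $\cW$ implicit in $\Psi\Phi=\id_\cW$, which is why one insists $\cV_1\times\cV_2\subseteq\cW$. The main obstacle I anticipate is purely bookkeeping rather than conceptual: one must check that the sets produced by Theorem \ref{IFFT} can be intersected with product neighbourhoods while staying open and $G$-free, and that the reduction to $f(0,0)=0$, ${\rm D}_2f(0,0)=\id$ (via replacing $f$ by ${\rm D}_2f(0,0)^{-1}f$) is legitimate in the free setting — but this is routine since composing with the fixed continuous linear isomorphism ${\rm D}_2f(0,0)^{-1}$ preserves $G$-freeness, $\cC^r$, and analyticity. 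The genuine analytic content has all been absorbed into Theorem \ref{IFFT}.
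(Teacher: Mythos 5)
Your proposal is correct and is precisely the argument the paper intends: the paper's ``proof'' of Corollary \ref{IFFT2} consists of the single remark that the classical reduction of the implicit function theorem to the inverse function theorem carries over to the free setting, and your auxiliary map $\Phi(x,y)=(x,f(x,y))$ combined with Theorem \ref{IFFT} is exactly that reduction. One small correction to a parenthetical claim: for $G=\GL$ norm balls are \emph{not} closed under $\GL$-similarity, so to arrange that $\cV_1$, $\cV_2$ are $G$-free you should pass to similarity-invariant envelopes as in Proposition \ref{simenv} and the end of the proof of Theorem \ref{IFFT} (noting that a product of separately $G$-invariant sets is closed under simultaneous similarity), rather than relying on the balls themselves being invariant.
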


We now turn our attention to the inverse function theorem about neighbourhoods of non-scalar points.
Let us denote 
$$C_{G}(A)=\{\s\in G_n\mid \s A_i=A_i\s, \,1\leq i\leq g\}$$
 for $A=(A_1,\dots,A_g)\in M_n(\F)^g$. 
We say that $\cU\subset \cM_n(\F)$ is a $C_G(A)\otimes G$-free set if it is closed under direct sums and simultaneous $C_G(A)\otimes G$-similarity.
 By 
$$\td\D f(A):\cM_n(\F)^g\to \cM_n(\F)^{g'}$$ 
for $f:\cU\to \cM_n(\F)^{g'}$, $A\in\cU\subseteq \cM_n(\F)^g$, we denote the linear map defined levelwise for every $s\in \N$ as
\[\td\D f(A)[ns](H):=\D f(A^{\oplus s})(H).\]
The next theorem generalizes Theorem \ref{IFFT} to the case of non-scalar center points. 

\begin{theorem}\label{IFFTn}
Let $\cU\subset \cM(\F)^g$ be an open $G$-free set, $A\in \cU[n]$, $f:\cU\to \cM(\F)^{g'}$ a $G$-free map, and let $f\in \cC^r$ 
 for $r\in \N$, with ${\td\D}f(A)$ invertible as a continuous linear map. 
There exist open $C_G(A)\otimes G$-free sets $\cW\subset \cM_n(\F)^{g}$, $\cW'\subset \cM_n(\F)^{g'}$ containing $A$,  $f(A)$ respectively, 
and a $C_G(A)\otimes G$-free map $h:\cW'\to \cW$ so that
$fh=\id_{\cW'}$, $hf=\id_{\cW}$, and $h\in \cC^{r}$. 
\end{theorem}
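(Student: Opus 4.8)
The plan is to mimic the proof of Theorem \ref{IFFT}, but working levelwise over the block sizes $ns$ and replacing the group $G_n$-similarity arguments by $C_G(A)\otimes G_s$-similarity. First I would reduce to the normalized situation: replace $f$ by $\ol f = \td\D f(A)^{-1}(f - f(A))$, which is again a $G$-free map (since $\td\D f(A)$ is built levelwise from $\D f(A^{\oplus s})$, which commutes with the relevant group actions by the Lemma following \eqref{der}), sends $A$ to $0$ and has $\td\D f(A) = \id$. By continuity of $\D f$ in the uniformly open topology at $A$, choose $\delta>0$ with $\cB(A,2\delta)\subseteq \cU$ and $\|\D(x - f(x))\| < \tfrac12$ on $\cB(A,2\delta)$. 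Now apply the classical Theorem \ref{IFT} on each Banach space $M_{ns}(\F)^g$ to the slice $f[ns]$ on the ball of radius $2\delta$ about $A^{\oplus s}$: this yields injectivity of $f[ns]$ on that ball of radius $\delta$ and a $\cC^r$ inverse $h[ns]$ on the ball of radius $\delta/2$, with image $\cV[ns]$ an open subset of the ball of radius $\delta$. Assemble these into $h$ defined on $\cB(0,\delta/2)$ (in the shifted coordinates, i.e.\ on $\cB(A,\delta/2)$) with image $\cV$.

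Next I would check the equivariance and direct-sum compatibility, exactly as in the proof of Theorem \ref{IFFT} but now using the group $C_G(A)\otimes G$. For $\sigma \in (C_{G_n}(A)\otimes G_s)\cap G_{ns}$ and $Y \in \cB(A,\tfrac{\delta}{2})[ns]$, note $\sigma A^{\oplus s}\sigma^{-1} = A^{\oplus s}$ (this is precisely why one uses the centralizer of $A$), so $\sigma Y\sigma^{-1}$ stays in the same ball; then the chain
\[
f\big(h(\sigma Y\sigma^{-1})\big) = \sigma Y\sigma^{-1} = \sigma f(h(Y))\sigma^{-1} = f\big(\sigma h(Y)\sigma^{-1}\big),
\]
together with injectivity of $f$ on $\cB(A,\delta)$ and $\sigma h(Y)\sigma^{-1} \in \cB(A,\delta)$, forces $h(\sigma Y\sigma^{-1}) = \sigma h(Y)\sigma^{-1}$. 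The direct-sum compatibility of $h$ is argued the same way: $f(h(Y)\oplus h(Z)) = f(h(Y))\oplus f(h(Z)) = Y\oplus Z = f(h(Y\oplus Z))$, and injectivity on $\cB(A,\delta)$ finishes it (one checks $h(Y)\oplus h(Z)$ and $h(Y\oplus Z)$ both lie in $\cB(A,\delta)$). Hence $h$ is a $C_G(A)\otimes G$-free map and $\cV = h(\cB(A,\tfrac{\delta}{2}))$ is a $C_G(A)\otimes G$-free set. Finally set $\cW' = \cB(A,\tfrac{\delta}{2})$ (which is $C_G(A)\otimes G$-free and open by definition of the uniformly open topology) and $\cW = \cV$; these contain $A$ and $f(A)$ respectively, the identities $fh = \id_{\cW'}$, $hf = \id_{\cW}$ hold by construction, and $h \in \cC^r$ by Theorem \ref{IFT}.

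The main obstacle I anticipate is the bookkeeping around the neighbourhood $\cB(A,\delta)$ and its interaction with the block-diagonal embeddings. One must be careful that the single scalar $\delta$ coming from uniform-open continuity of $\D f$ at $A$ simultaneously controls all the slices $M_{ns}(\F)^g$, and that $\|X - A^{\oplus s}\| < 2\delta$ is genuinely the correct condition for applying Theorem \ref{IFT} about the base point $A^{\oplus s}$ in each slice; this is exactly where the definition of the uniformly open topology, and of continuity of $\D f$ therein, is used essentially. A secondary subtlety is verifying that the images $h[ns](\cB(A,\tfrac{\delta}{2})[ns])$ patch up consistently under direct sums so that $\cV$ really is closed under direct sums and under $C_G(A)\otimes G$-conjugation — this follows from the uniqueness of the local inverse in Theorem \ref{IFT} (two $\cC^r$ maps inverting $f$ on overlapping balls must agree), but it deserves an explicit remark. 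I would not expect any new analytic input beyond Theorem \ref{IFT}; unlike Theorem \ref{IFFT} there is no automatic-analyticity claim here (the statement only asserts $h\in\cC^r$), so the $\GL$-specific power-series argument of Theorem \ref{IFFT} is not needed, and the proof for $G=\GL$ and $G=\OO$ is genuinely uniform.
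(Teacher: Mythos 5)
Your reduction to the normalized map $\ol f$, the levelwise application of Theorem \ref{IFT}, and the injectivity-based verification of equivariance and direct-sum compatibility all match the paper's proof, which indeed consists of defining $\ol f[ns](X)=\D f(A^{\oplus s})^{-1}\big(f(X+A^{\oplus s})-f(A^{\oplus s})\big)$ and then repeating the argument of Theorem \ref{IFFT}. (A minor point: $\ol f$ is a $C_G(A)\otimes G$-free map, not a $G$-free map, since $\D f(A^{\oplus s})^{-1}$ only intertwines conjugation by elements fixing $A^{\oplus s}$; you implicitly use the right group later, so this is only a wording slip.)

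The genuine gap is your final claim that ``the $\GL$-specific power-series argument of Theorem \ref{IFFT} is not needed, and the proof for $G=\GL$ and $G=\OO$ is genuinely uniform.'' Your equivariance argument
\[
f\big(h(\sigma Y\sigma^{-1})\big)=\sigma Y\sigma^{-1}=f\big(\sigma h(Y)\sigma^{-1}\big)
\]
requires that $\sigma Y\sigma^{-1}$ lie in $\cB(A,\tfrac{\delta}{2})$ (so that $h$ is defined there) and that $\sigma h(Y)\sigma^{-1}$ lie in $\cB(A,\delta)$ (so that injectivity applies). For $G=\OO$ the elements of $(C_{\OO_n}(A)\otimes M_s(\F))\cap\OO_{ns}$ are orthogonal and fix $A^{\oplus s}$, so they preserve these balls and the argument closes. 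For $G=\GL$ a non-orthogonal $\sigma$ in $(C_{\GL_n}(A)\otimes M_s(\F))\cap\GL_{ns}$ does not preserve $\cB(A,\delta)$, so the chain above breaks down exactly as it did in Theorem \ref{IFFT}. This is why the paper's proof explicitly says to use Theorem \ref{rumanalit} in place of Theorem \ref{analit}: one first gets only $\OO$-type equivariance, then uses analyticity of $h$ and its power series expansion about the non-scalar point $A$ (with generalized-polynomial homogeneous parts), applies the scaling trick $t\mapsto \sigma(tY)\sigma^{-1}$ to show each homogeneous part is a $C_{\GL_n}(A)\otimes\GL$-concomitant, and finally passes to the similarity-invariant envelopes to obtain genuinely $C_{\GL}(A)\otimes\GL$-free open sets $\cW,\cW'$. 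Without this step your $\cW'=\cB(A,\tfrac{\delta}{2})$ is not a $C_{\GL}(A)\otimes\GL$-free set and $h$ is not shown to be a $C_{\GL}(A)\otimes\GL$-free map.
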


\begin{proof}
Note that 
\[\D f(\s X\s^{-1})(\s H\s^{-1})=\s \D f(X)(H)\s^{-1}\]
 for every $X,H\in M_n(\F)^g,\s\in G_n,n\in \N$. Since $A\in \cU$, which is an open $G$-free set, there exists $\delta>0$ such that $\cB(A,\delta)\subset \cU$. Then the function $\ol f:\cB(0,\delta)\cap\cM_n(\F)^g\to  \cM_n(\F)^g$ defined by 
\[{\ol f}[ns]:\cB(0,\delta)\cap M_{ns}(\F)^g\to M_{ns}(\F)^g,\quad {\ol f}[ns](X):=\D f\big (A^{\oplus s}\big)^{-1}\Big(f\big(X+A^{\oplus s}\big)-f\big(A^{\oplus s}\big)\Big )\] 
is $C_G(A)\otimes G$-free with ${\ol f}(0)=0$, $\D {\ol f}(0)=\id_{\cM_n(\F)}$. 
A similar reasoning to that in the proof of Theorem \ref{IFFT} with obvious modifications and using Theorem \ref{rumanalit} in the place of Theorem \ref{analit} now 
yields the desired conclusions.
\end{proof}

\subsection{Finitely Open Topology}
Now we state a weak form of the inverse function theorem for the finitely open topology. The  Fr\'echet derivative  ${\rm D}f$ is continuous in the finitely open topology if ${\rm D}f[n]$ is continuous for every $n\in \N$.

\begin{proposition}\label{foi}
Let $\cU\subseteq \cM(\F)^g$ be an open $G$-free set, $f:\cU\to \cM(\F)^{g'}$ a $G$-free map, and let $f\in \cC^r$ 
for some  $r>0$ with ${\rm D}f(0)$ be invertible. 
There exist finitely open sets $\cW,\cV$, containing $0$, $f(0)$ respectively, and a free $\OO$-concomitant map $h:\cV\to \cW$ such that
$fh=\id_{\cV}$, $hf=\id_{\cW}$, and $h\in \cC^{r}$. In the case $\F=\CC$, $h$ is a  a free $G$-concomitant map.
\end{proposition}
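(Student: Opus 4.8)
The plan is to mimic the proof of Theorem \ref{IFFT}, but working levelwise and only exploiting the part of the $G$-equivariance that survives without passing to similarity-invariant envelopes. First I would reduce, exactly as in Theorem \ref{IFFT}, to the normalized situation $g=g'$, $f(0)=0$, $\mathrm{D}f(0)=\id$, by replacing $f$ with $\ol f=\mathrm{D}f(0)^{-1}(f-f(0))$; this operation preserves the property of being a $G$-free map and the $\cC^r$ hypothesis. Since $\cU$ is open in the finitely open topology and $\mathrm{D}f[n]$ is continuous for each $n$, for every fixed $n$ there is $\delta_n>0$ with $\mathrm{Ball}(0,2\delta_n)\subseteq\cU[n]$ and $\|\mathrm{D}(x-f(x))\|<\tfrac12$ on $\mathrm{Ball}(0,2\delta_n)\subseteq M_n(\F)^g$. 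Applying the classical inverse function theorem (Theorem \ref{IFT}) at each level $n$ separately yields an injective $f[n]$ on $\mathrm{Ball}(0,\delta_n)$ and a $\cC^r$ inverse $h[n]:\mathrm{Ball}(0,\tfrac{\delta_n}{2})\to\cV[n]$ with $\cV[n]$ open in $\mathrm{Ball}(0,\delta_n)$, satisfying $f[n]h[n]=\id$, $h[n]f[n]=\id$. Taking $\cW=\bigcup_n\mathrm{Ball}(0,\tfrac{\delta_n}{2})$ and $\cV=\bigcup_n\cV[n]$ gives finitely open sets, and $h=(h[n])_n$ is defined on $\cV$.

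Next I would verify that $h$ is a free $\OO$-concomitant, i.e.\ that it respects $\OO_n$-similarity (but not necessarily direct sums, since the radii $\delta_n$ need not be compatible across levels). For $u\in\OO_n$ and $Y\in\mathrm{Ball}(0,\tfrac{\delta_n}{2})[n]$, conjugation by $u$ is an isometry so $uYu^t$ lies in the same ball; using that $f$ is a $G$-free map,
\[
f\big(h(uYu^t)\big)=uYu^t=u\,f\big(h(Y)\big)\,u^t=f\big(u\,h(Y)\,u^t\big),
\]
and since $u\,h(Y)\,u^t$ and $h(uYu^t)$ both lie in $\mathrm{Ball}(0,\delta_n)[n]$ where $f[n]$ is injective, we get $h(uYu^t)=u\,h(Y)\,u^t$. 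Hence $\cV$ is $\OO$-similarity invariant and $h$ is a free $\OO$-concomitant; regularity $h\in\cC^r$ is inherited from Theorem \ref{IFT} at each level.

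For the stronger conclusion when $\F=\CC$, I would promote $h$ to a free $G$-concomitant, i.e.\ also $\GL_n$-equivariant. When $\F=\CC$, $f$ is analytic by \cite[Proposition 2.5]{HKM11} (or \cite[Theorem 7.2]{KV}), hence so is $h$ by the analytic case of Theorem \ref{IFT}. The key point is the germ-level identity: for fixed $Y\in\cV[n]$ and $\s\in\GL_n$ there is $\delta_\s>0$ with $t\s Y\s^{-1}\in\cW[n]$ and $\s\,h(tY)\,\s^{-1}\in\mathrm{Ball}(0,\delta_n)[n]$ for $|t|<\delta_\s$, and on this range the computation above (with $u,u^t$ replaced by $\s,\s^{-1}$, using injectivity of $f[n]$ on $\mathrm{Ball}(0,\delta_n)$) gives $h(\s tY\s^{-1})=\s\,h(tY)\,\s^{-1}$; expanding both sides as power series in $t$ and comparing homogeneous parts shows each homogeneous component of the expansion of $h[n]$ about $0$ is a $\GL_n$-concomitant, so $h$ is a free $G$-concomitant near $0$ and one enlarges $\cV,\cW$ to the corresponding $\GL$-similarity-invariant envelopes as in Theorem \ref{IFFT}.

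The main obstacle I anticipate is bookkeeping rather than conceptual: because the finitely open topology imposes no coherence between the levels, one genuinely cannot expect $h$ to respect direct sums, so the output is only a concomitant, not a free map — and care is needed that every identity ($fh=\id$, $hf=\id$, equivariance) is checked strictly within a single level $n$ where injectivity of $f[n]$ on $\mathrm{Ball}(0,\delta_n)$ is available; in particular in the $\GL$-case one must keep track that the auxiliary point $\s h(tY)\s^{-1}$ stays inside the injectivity ball $\mathrm{Ball}(0,\delta_n)$ and not merely inside $\cU$.
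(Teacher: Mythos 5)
Your proof is correct and takes essentially the same route as the paper: apply the classical inverse function theorem levelwise (the finitely open topology permits unrelated radii $\delta_n$ at each level, which is exactly why one only gets a free concomitant and not a free map), verify $\OO_n$-equivariance of $h[n]$ via injectivity of $f[n]$ on the $\OO_n$-invariant ball, and for $\F=\CC$ upgrade to $\GL_n$-equivariance by the same germ/analytic-continuation argument used in Theorem \ref{IFFT} and then pass to $\GL$-similarity-invariant envelopes. The only blemish is a harmless relabeling: you write ``$h$ is defined on $\cV$'' and later fix ``$Y\in\cV[n]$'', whereas in your own construction the domain of $h[n]$ is $\mathrm{Ball}(0,\delta_n/2)$, i.e.\ your $\cW$; this does not affect the substance of the argument.
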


\begin{proof}
By the classical inverse function theorem  we can find for every $n\in \N$ 
 neighbourhoods $\cV_n$, $\cB(0,\delta_n)$ of $0$, $f[n](0)$ respectively, such that 
$f[n]:\cV_n\to{\cB(0,\delta_n)}$ is a diffeomorphism with the inverse $h[n]\in \cC^r$. 
Since $\cB(0,\delta_n)$ is  $\OO_n$-invariant so is $\cV_n$ for every $n\in \N$. 
As in the proof of  Theorem \ref{IFFT} it is easy to show that $h(uYu^t)=uh(Y)u^t$ for every $u\in \OO_n$, $Y\in \cV_n$. 
By the definition of the finitely open topology, the sets $\cV=\bigcup_n \cV_n$, $\cW=\bigcup_n\cB(0,\delta_n)$ 
 are finitely open. This establishes the proposition in the case $G=\OO$. In the case $G=\GL_n$ and $\F=\CC$  we proceed as in the proof of Theorem \ref{IFFT}, and replace $\cV$, $\cW$ by $\wtd \cV$, $\wtd \cW$ respectively.
To show that $f,\td h$ satisfy the required identities one also only needs to follow the steps in the proof of Theorem \ref{IFFT}.
\end{proof}

We do not know whether $\cW$ and $\cV$ in Proposition \ref{foi} can be taken to be $G$-free sets, and consequently $h$ would be a $G$-free map; cf. \cite[Section 8]{AM2}.

\subsection{Global Free Inverse Function Theorem}
In \cite[Theorem 1.1]{Pas} it is proved that if $f$ is a $\GL$-free map and ${\rm D}f(X)$ is nonsingular for every $X\in  \cM(\CC)$ then $f$ is injective, cf. \cite{AM2}. This also holds for $\OO$-free maps. 

\begin{proposition}\label{GIFFT}
If $f:\cM(\F)^g\to \cM(\F)^{g'}$ is a differentiable $G$-free map such that ${\rm D}f(X)$ is nonsingular for every $X\in  \cM(F)$ then $f$ is injective. If $f\in \cC^r$ for some $r\in \N$ then there exists a $G$-free map $h:f(\cM(\F)^g)\to \cM(\F)^{g'}$, 
$h\in \cC^r$, such that $hf=\id|_{\cM(\F)^g}$, $fh=\id|_{f(\cM(\F)^g)}$.
\end{proposition}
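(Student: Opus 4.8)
The plan is to mimic Pascoe's argument from \cite{Pas} for the $\GL$ case and check that it goes through verbatim for $G=\OO$. The only genuinely ``free'' input is the following well-known device: for any $G$-free map $f$ and any $X_1,X_2\in M_n(\F)^g$, applying $f$ to the block matrix
\[
Z=\begin{pmatrix} X_1 & X_1-X_2\\ 0 & X_2\end{pmatrix}\in M_{2n}(\F)^g
\]
(or, in the involution setting, the symmetrized version appearing in \eqref{did}) reduces global injectivity to a statement about nonsingularity of a derivative. Concretely, suppose $f(X_1)=f(X_2)$ for some $X_1,X_2\in M_n(\F)^g$. First I would consider, for $t\in[0,1]$, the curve $Z(t)=\begin{pmatrix} X_1 & t(X_1-X_2)\\ 0 & X_2\end{pmatrix}$ and the scalar-type quantity one extracts from the $(1,2)$ block of $f(Z(t))$; the idea is that $f(Z(1))$ has $(1,2)$-block equal to a ``divided difference'' of $f$ evaluated between $X_1$ and $X_2$, which vanishes precisely because $f(X_1)=f(X_2)$, yet by the mean-value/inverse-function mechanism nonsingularity of $\mathrm Df$ everywhere forces $X_1=X_2$. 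In the $\OO$-setting one uses instead the symmetric block point from the Lemma preceding Theorem \ref{IFFT}, so that $Z$ stays in the relevant domain and $f(Z)$ again has off-diagonal block $f(X_1)-f(X_2)$; the hypothesis that $\mathrm Df$ is nonsingular at every point of $\cM(\F)^g$ applies at $Z$ since $Z\in M_{2n}(\F)^g$.

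For the construction of $h$: once injectivity is known, $f$ is a bijection from $\cM(\F)^g$ onto its image, so $h:=f^{-1}$ exists set-theoretically on $f(\cM(\F)^g)$. I would then verify:
(i) $h$ is a $G$-free map — it respects $G$-similarity and direct sums. This follows exactly as in the proof of Theorem \ref{IFFT}: if $Y=f(X)$ then $f(\sigma X\sigma^{-1})=\sigma Y\sigma^{-1}$, and injectivity of $f$ forces $h(\sigma Y\sigma^{-1})=\sigma h(Y)\sigma^{-1}$; likewise for direct sums using \eqref{eq:oplus2}. Note that $f(\cM(\F)^g)$ is automatically closed under $G$-similarity and direct sums because $f$ is a $G$-free map, so it is a $G$-free set.
(ii) $h\in\cC^r$ when $f\in\cC^r$. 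Here one invokes the classical inverse function theorem levelwise: at each point $Y_0=f[n](X_0)$ in the image, nonsingularity of $\mathrm Df[n](X_0)$ gives a local $\cC^r$ inverse of $f[n]$ near $X_0$, and these local inverses must agree with $h[n]$ by injectivity, so $h[n]\in\cC^r$ on all of $f[n](M_n(\F)^g)$, which is open in $M_n(\F)^g$ by the open mapping property of local diffeomorphisms.
The identities $hf=\id$, $fh=\id$ on the respective domains are then immediate from $h=f^{-1}$.

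The main obstacle I anticipate is making the block-matrix injectivity trick rigorous in the $\OO$ case: for $\GL$-free maps one has the clean formula \eqref{der} relating $f$ on upper-triangular blocks to the Gâteaux derivative, but for $\OO$-free maps the honest analog is the weaker symmetric identity \eqref{did}, and one must argue carefully that $f(X_1)=f(X_2)$ together with nonsingularity of $\mathrm Df$ everywhere still forces $X_1=X_2$ — essentially one runs the contraction/mean-value estimate of Theorem \ref{IFT} along the segment joining $\bigoplus$-type points, or alternatively reduces to the $\GL$ statement over $\CC$ after complexification and then descends. A secondary point to be careful about is that in the $\cC^r$ (as opposed to analytic) regime over $\RR$ one cannot use power series, so the levelwise classical inverse function theorem must carry the full weight of the smoothness and open-image claims; this is routine but should be stated explicitly.
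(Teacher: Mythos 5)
Your proposal is correct and follows essentially the paper's route: injectivity comes from the derivative identity \eqref{did} evaluated at the block-diagonal point $X_1\oplus X_2$, and $h$ is obtained exactly as in the proof of Theorem \ref{IFFT} (set-theoretic inverse, shown to be $G$-free by injectivity and shown to be $\cC^r$ by the levelwise classical inverse function theorem). The ``main obstacle'' you anticipate in the $\OO$ case is not there: once $f(X_1)=f(X_2)$, identity \eqref{did} says the nonsingular linear map $\D f(X_1\oplus X_2)$ annihilates the direction with off-diagonal blocks $X_1-X_2$, so $X_1=X_2$ at once --- no mean-value estimate, contraction argument, or complexification is needed. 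One imprecision worth fixing: in the $\OO$ setting it is not true that $f$ evaluated at a block point $Z$ with nonzero off-diagonal entries has off-diagonal block $f(X_1)-f(X_2)$ (the upper-triangular similarity used for $\GL$ is not orthogonal); the honest statement is precisely the derivative identity \eqref{did}, which is all the argument uses.
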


\begin{proof}
Suppose that $f(X_1)=f(X_2)$ for some $X_1,X_2\in M_n(\F)^g$. 
Then \eqref{did} yields 
$$
\D f
\begin{pmatrix}
X_1&0\\
0& X_2
\end{pmatrix}
\begin{pmatrix}
0&X_1-X_2\\
X_1-X_2& 0
\end{pmatrix}
=
\begin{pmatrix}
0&0\\
0& 0
\end{pmatrix}.$$
Since $\D f
\begin{pmatrix}
X_1&0\\
0& X_2
\end{pmatrix}$ 
is nonsingular we have $X_1=X_2$, which implies the injectivity of $f$. 
The proof of the existence of the free map $h$ satisfying the required properties is the same as that of Theorem \ref{IFFT}.
\end{proof}

\begin{remark}
We remark that a free real Jacobian conjecture can be deduced from Proposition \ref{GIFFT}. 
(See e.g.~\cite[Theorem 1.3]{Pas}.)
\end{remark}

\section{Examples of $\OO$-Free Maps}\label{ex}

{The theory of $\GL$-free maps is very rigid to the point that many properties are stronger
than for complex analytic functions \cite{KV,HKM11,HKM12,Voc10}. In  contrast to this is the theory of $\OO$-free maps as we shall now demonstrate. We start by presenting the following examples:
\begin{enumerate}[$\bullet$]
\item
a continuous $\OO$-free map which is not differentiable (Example \ref{cont});  more generally,
\item  $C^k$-maps which are not $C^{k+1}$ (Example \ref{k}); 
\item
a smooth $\OO$-free map which is not analytic (Example \ref{sin}).
\end{enumerate}
}

\begin{example}\label{cont}
Consider the $\OO$-free map {$f_m:\cM(\RR)\to \cM(\RR)$ defined by} 
\[f_m(x)=(xx^t)^{\frac{1}{m}} \quad\text{ for some $m\geq 2$}.\] 
It is continuous by \cite[Theorem 1.1]{ZZ}.
 Note that $f_m$ is not differentiable at $0$.
\end{example}

\begin{example}\label{k}
Let $k\in \N$ and 
$$
f:\cM(\RR)\to \cM(\RR)\quad f(x)= (xx^t)^{k+\frac{1}{2}}.
$$
Then $f$ is an $\OO$-free $C^k$-map \cite[Theorem 1.1]{ZZ},  but is not $C^{k+1}$.
\end{example}

\begin{example}\label{sin}
For an example of a smooth nonanalytic $\OO$-free map consider the map
$$
{f:\cM(\RR)\to \cM(\RR), \quad}
f(x)=\nsum_{j=0}^{\infty} e^{\displaystyle -\sqrt{2^j}}\cos\big(2^j (x+x^t)\big).$$
Since $\|\cos(2^j(A+A^t))\|\leq 1$ for every $A\in \cM(\RR)$, the power series is convergent.
We show that there exist {derivatives of all orders} in all directions at all points of $\Mr$, but $f$ is not analytic. 
Let us show {first} that $f$ is not analytic at $0$. This holds already for the function $f[1]:\RR\to \RR$.
Indeed, since
$$\limsup_{n\to \infty} \frac{|f[1]^{(n)}(0)|}{n!}\leq\limsup_{n\to \infty}\frac{e^{-\sqrt{n}}n^n}{n!}=\infty,$$
the radius of convergence of the Taylor series of $f[1]$ at $0$ is $0$.
Consider now the $\ell$-th order derivative of the function $x\mapsto \cos(kx)$ at a point $A\in M_n(\RR)$ in the direction $H\in M_n(\RR)$. 
We define matrices 
$$
A_{H}^\ell=
\begin{pmatrix}
A&H&&\\
&\ddots&\ddots&\\
&&A&H\\
&&&A\\
\end{pmatrix}\in M_{(\ell+1)n}(\RR).
$$
Let $F$ be an analytic function around $0$ with the radius of convergence $\infty$.  
The  $\ell!$-multiple of  the $(1,\ell+1)$-entry of the matrix $F(A_{H}^\ell)$ equals the $\ell$-th order derivative of $F$ at the point $A$ in the direction $H$.
By \cite[Theorem 4.25]{Hig} we have 
\[
||\cos(kA_H^\ell)||\leq (\ell+1)n\alpha k^{\ell n},
\]
where $\alpha$ depends only on $A$, 
for $A=A^t, H=H^t\in M_n(\RR)$.
This implies that 
$$\sum_{j=0}^\infty e^{-\sqrt{2^j}}\Big\|{\delta}^\ell\cos\big(2^j (A+A^t)\big)(H+H^t)\Big\|\leq 
(\ell+1)!n\alpha\sum_{j=0}^\infty e^{-\sqrt{2^j}}2^{j\ell n}< \infty.$$  
Hence the $\ell$-th order derivative of $f$ at $A$ in the direction $H$ exists and equals 
$$\sum_{j=0}^\infty e^{-\sqrt{2^j}}{\delta}^\ell\cos\big(2^j (A+A^t)\big)(H+H^t).$$
\end{example}

Let $f:\cU\to \cM(\CC)$ be an analytic $\GL$-free map. 
If $f$ is uniformly bounded on $\cU$ then the $m$-th homogeneous part of the corresponding power series is also uniformly bounded (see e.g.~the last part of the proof of \cite[Proposition 2.24]{HKM12}). 
In the case of $\OO$-free maps this is no longer the case.
\begin{example}
 The analytic $\OO$-free map
\[
x\mapsto \sin(xx^t)\]
is uniformly bounded on $\cM(\F)$, however its $(4m+2)$-th homogeneous part 
 \[(-1)^m\frac{1}{(2m+1)!} (xx^t)^{2m+1}\]
 is not uniformly bounded.
\end{example}

If an analytic $\GL$-free map $f:\cU\to \cM(\CC)$ is uniformly bounded then it converges uniformly on $\cU$ by \cite[Proposition 2.24]{HKM12}. 
The proof of the uniform convergence is easily established after noticing that the homogeneous parts of $f$ are also uniformly bounded by the same constant. 
{As the previous example shows 
this does not necessarily hold for $\OO$-free maps. Here is an explicit example of a uniformly bounded
 analytic $\OO$-free map, which does not converge uniformly in a neighborhood of $0$. }

\begin{example}
We provide an example of a bounded analytic $\OO$-free map, such that the corresponding power series converges uniformly on $M_n(\RR)$ for all $n$ but does not converge uniformly on $\cM(\RR)$. 
Define the homogeneous polynomials $z_{ij}=x_3^2x_2^{i-1}x_1^{j-1}-x_2^{i}x_1^{j}$ and let 
$$h_k(x_1,x_2,x_3)=S_{2k}(z_{11},z_{22},z_{12},z_{33},\dots,z_{kk},z_{k-1,k},z_{k+1,k+1}),$$
where $S_{2k}$
 denotes the standard polynomial of degree $2k$; i.e., 
\[
S_{2k}(x_1,\dots,x_{2n})=\sum_{\s\in {\rm Sym}(2n)}(-1)^\s x_{\s(1)}\cdots x_{\s(2n)}.
\]
We take 
\beq\label{eq:greh}
f(x_1,x_2,x_3)=\sin\Big(\sum_{k=1}^\infty k!\big(h_k(x_1,x_2,x_3)+h_k(x_1,x_2,x_3)^t\big)\Big).
\eeq
Since $S_{2k}$ is a polynomial identity of $M_n(\RR)$ for $k\geq n$ by the Amitsur-Levitzki theorem (see e.g.~\cite[Theorem 1.4.1]{Row}), $f[n]$ can be defined by taking only a finite sum in the argument of $\sin$ in \eqref{eq:greh}.  
Since $x\mapsto \sin(x)$ is analytic on $M_n(\RR)$, $f[n]$ is real analytic on $M_n(\RR)$. Moreover, $f$ is uniformly bounded by $1$, since the argument of $\sin$ in $f$ is symmetric. 
Note that the corresponding power series $F=\sum f_m$, where $f_m$  is homogeneous of degree $m$, converges uniformly on $M_n(\RR)^3$ for every $n$, since the sum in the argument of $\sin$ in the definition of $f$ is finite on $M_n(\RR)^3$ and the power series corresponding to $\sin$ restricted to symmetric matrices converges uniformly. 

We will now show that $F$ does not converge uniformly on $\cM(\RR)$.
{Assume for the sake of contradiction} that for every $\varepsilon>0$ there exist $N$ and $r>0$ such that
$$\Big\|f(X)-\sum_{m=0}^n f_m(X)\Big\|< \varepsilon \;\text{ for every $\|X\|<r$, $n\geq N$}.$$
Fix  $\varepsilon<1$ and the corresponding $N$ and $r$.  
Take $n>N$ such that 
\beq\label{eq:weirdo}
n!\left(\frac{r}{2}\right)^{2n^2+3n+1}>\frac{\pi}{2}.
\eeq
Let 
$$x_1=\sum_{i=1}^n e_{i,i+1},\quad x_2=\sum_{i=1}^{n}e_{i+1,i},\quad x_3=\sum_{i=1}^{n+1} e_{ii}+e_{n,n+1}$$ 
 be elements in $M_{n+1}(\RR)$. 
Note that $z_{ij}=e_{ij}$ for $1\leq i,j\leq n+1$, $i<j$. and $z_{ii}=e_{ii}+e_{n,n+1}$. 
For $n>2$ we thus have 
\[
h_k(x_1,x_2,x_3)=0 \quad \text{ for $k\neq n$, } \quad h_n(x_1,x_2,x_3)=(-1)^{n-1}(n+1)e_{1,n+1},
\]
where the last identity follows by the identities
\begin{multline*}
S_{2n}(e_{11},e_{22},e_{12},e_{33},\dots,e_{k-2,k-1},e_{n,n+1},e_{k-1,k},\dots,e_{n-1,n},e_{n+1,n+1})\\
=S_{2n}(e_{n,n+1},e_{22},e_{12},\dots,e_{n-1,n},e_{n-1,n},e_{n+1,n+1})=(-1)^{n-1}e_{1,n+1}
\end{multline*}
for $2\leq k\leq n+1$, and setting $e_{01}=e_{11}$. 
{By \eqref{eq:weirdo} there is}
 $r'<r$ such that 
 \[(n+1)!\left(\frac{r'}{2}\right)^{2n^2+3n+1}=\frac{\pi}{2}.
 \]
Letting
\[y_i=\frac{r'}{2}x_i,\quad 1\leq i\leq 3,
\]
we have $||y||<r$ and 
$$h_{n}(y_1,y_2,y_3)= (-1)^{n-1}\Big(\frac{r'}{2}\Big)^{2n^2+3n+1}(n+1)e_{1,n+1},$$
whence 
\[f(y_1,y_2,y_3)=(-1)^{n-1}(e_{1,n+1}+e_{n+1,1}).\]
Note that $f_m(A_1,A_2,A_3)=0$ for $m<\ell$ if $h_{k}(A_1,A_2,A_3)=0$ for $k<\ell$.
Thus, \[\sum_{m=0}^N f_m(y_1,y_2,y_3)=0\] and 
$$\Big\|f(y_1,y_2,y_3)-\sum_{m=0}^n f_m(y_1,y_2,y_3)\Big\|=1>\varepsilon,$$
 a contradiction.
\end{example}

\appendix
\section{$\U$-Free Maps}\label{apU}
In this section we give a sample of the minor modifications needed to handle the case $G=\U=(\U_n)_{n\in\N}$,  $\F=\CC$. 
 The free algebra with trace with involution over $\CC$ consists of noncommutative polynomials in the variables $x_k,x_k^*$ 
over the polynomial algebra $T^*$ in  the variables $\tr(w)$, where $w\in \langle X,X^*\rangle/_{\cyc}$, 
with the involution $\tr(w)^*:=\tr(w^*)$, $\alpha^*=\ol\alpha$  for $\alpha\in \CC$. 
The evaluation map from the free algebra with involution with trace to $M_n(\CC)$ respects involution, in particular, $\tr(A^{w^*})=\tr(A^w)^*=\ol{\tr(A^w)}$. 

It follows from \cite[Theorem 11.2]{Pro} that  a polynomial map in the variables $x_{ij}^{(k)},(x_{ij}^{(k)})^{*}$ is a $U_n$-concomitant 
if and only if it is a trace polynomial in the variables $x_k,x_k^*$, and 
nontrivial trace identities in the variables $x_k,x_k^*$ of $M_n(\CC)$ first appear in the degree $n$. 
Note that functions in commutative complex variables $x_{ij}^{(k)}$ that are real analytic can be expressed as power series in the variables $x_{ij}^{(k)},(x_{ij}^{(k)})^{*}$. With this observation and the previous statements the proofs of the following proposition and theorem go along the same lines as  the  proofs of analogous results (Proposition \ref{hompol}, Theorem \ref{rumanalit}) in the cases $G=\GL$, $G=\OO$.   

\begin{proposition}\label{Upol}
Let $f:\cM(\CC)^g\to \cM(\CC) $ be a $\U$-free map such that $f[n]$ is a polynomial map in the variables $x_{ij}^{(k)}, ({x_{ij}}^{(k)})^*$  for every $n\in \N$, 
and $\max_n \deg f[n]= d$, then $f$ is a free polynomial of degree $d$ in the variables $x_k,x_k^*$.
\end{proposition}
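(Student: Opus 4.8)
The plan is to transcribe the proof of Proposition~\ref{hompol}, substituting the invariant theory of the compact form $\U_n$ for that of $\GL_n$ and $\OO_n$. Fix $n$. Since $f[n]\colon M_n(\CC)^g\to M_n(\CC)$ is a $\U_n$-concomitant and, by hypothesis, a polynomial map in the $x_{ij}^{(k)}$ and their conjugates of degree $\le d$, the first fundamental theorem for $\U_n$ (\cite[Theorem 11.2]{Pro}) shows that $f[n]$ is the evaluation of a trace polynomial in the variables $x_k,x_k^*$ of degree $\le d$. Because nontrivial trace identities of $M_n(\CC)$ in $x_k,x_k^*$ first occur in degree $n$, for every $n\ge d+1$ this trace polynomial is uniquely determined and can be put in the normal form
\[
f[n]=\sum_M \tr\big(h_M^n\big)\,M,
\]
where $M$ runs over the words in $x_k,x_k^*$ of length $\le d$ and $\deg\tr(h_M^n)+\deg M\le d$.

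Next I would exploit that $f$ preserves direct sums. Fix $n\ge d+1$ and evaluate at $X\oplus Y$ with $X,Y\in M_n(\CC)^g$. Using $(X\oplus Y)^*=X^*\oplus Y^*$, so that $M(X\oplus Y)=M(X)\oplus M(Y)$, together with additivity of the trace on block-diagonal matrices, the identity $f[2n](X\oplus Y)=f[n](X)\oplus f[n](Y)$ produces two expressions for the same $2n\times 2n$ block-diagonal matrix. Comparing them and invoking once more the absence of nonzero trace identities of degree $\le d<2n$ yields, for every $M$,
\[
\tr\big(h_M^{2n}(X\oplus Y)\big)=\tr\big(h_M^{n}(X)\big)=\tr\big(h_M^{n}(Y)\big)
\]
for all $X,Y\in M_n(\CC)^g$; setting $X=0$ or $Y=0$ then forces the pure trace polynomial $\tr(h_M^n)$ (of degree $\le d<n$) to take a single value, i.e. $\tr(h_M^n)=\alpha_M\in\CC$. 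Hence for every $n>d$ the map $f[n]$ is the evaluation at the generic matrices and their conjugate transposes of the fixed free $*$-polynomial $\tilde f=\sum_M\alpha_M M\in\CC\langle x,x^*\rangle$, of degree $\le d$. Since $f$ respects $\U$-similarity and direct sums and agrees with $\tilde f$ on $M_n(\CC)^g$ for all $n>d$, it is identified with $\tilde f$, and $\deg\tilde f=d$ by the hypothesis $\max_n\deg f[n]=d$.

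I do not expect a serious obstacle: the argument is the literal analog of the proof of Proposition~\ref{hompol}, and everything beyond the two invariant-theoretic inputs is bookkeeping. The one point that genuinely needs checking is that those inputs are available in precisely the stated form — namely that a $\U_n$-equivariant polynomial map in the $x_{ij}^{(k)}$ and $\overline{x_{ij}^{(k)}}$ is a trace polynomial in $x_k,x_k^*$ of the same degree (this is \cite[Theorem 11.2]{Pro}, once one notes that a polynomial function of the $x_{ij}^{(k)}$, being in particular real analytic, is a polynomial in $x_{ij}^{(k)}$ and $\overline{x_{ij}^{(k)}}$), and that $M_n(\CC)$ satisfies no nonzero trace identity in $x_k,x_k^*$ of degree $<n$. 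With these in hand, each step above goes through verbatim.
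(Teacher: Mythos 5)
Your proposal is correct and is exactly the route the paper takes: the appendix explicitly reduces Proposition \ref{Upol} to the proof of Proposition \ref{hompol}, with the only new ingredients being the first fundamental theorem for $\U_n$ (\cite[Theorem 11.2]{Pro}) and the fact that trace identities of $M_n(\CC)$ in $x_k,x_k^*$ first appear in degree $n$ — precisely the two inputs you isolate and check. The direct-sum comparison forcing the pure trace coefficients to be constants, and the identification of $f$ with a single free $*$-polynomial via the levels $n>d$, match the paper's argument step for step.
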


\begin{theorem}
Let $f:\cU\to \cM(\CC)$ be an $\RR$-analytic $\U$-free map, and let  $\cB(A,\delta)\in \cU$, $A\in M_n(\CC)^g$, $\delta=(\delta_s)_{s\in\N}$, $\delta_s>0$ for every $s\in \N$. There exist $f_m\in \CC\langle A,A^*\rangle\ast\CC\X$ 
and a formal power series 
$$F(X)=\sum_{m=0}^\infty f_m(X-A),$$
which converges in norm in a neighbourhood $\cB(A,\delta)$ of $A$ such that $F(X)=f(X)$ for $X\in \cB(A,\delta)$.  
\end{theorem}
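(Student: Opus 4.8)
The plan is to mirror the proof of Theorem \ref{rumanalit} verbatim, substituting the $\U$-theoretic ingredients provided at the start of this appendix for their $\GL$- and $\OO$-analogs. First I would note that since $f$ is $\RR$-analytic, for each fixed $X \in M_{ns}(\CC)^g$ the map $t \mapsto f[ns]\bigl(A^{\oplus s} + t(X - A^{\oplus s})\bigr)$ is real analytic on a disc $|t| < \delta_X$, with $\delta_X \geq 1$ whenever $X \in \cB(A,\delta)$; expanding it as $\sum_m t^m f[ns]_m(X - A^{\oplus s})$ and comparing the expansions of $t_1 \mapsto f[ns]\bigl(A^{\oplus s} + t_1 t_2(X - A^{\oplus s})\bigr)$ in two ways shows, exactly as before, that $f[ns]_m$ is homogeneous of degree $m$ in the entries $x_{ij}^{(k)}, (x_{ij}^{(k)})^*$.

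Next I would assemble the level functions $f_m[ns] := f[ns]_m$ into a map $f_m : \cM_n(\CC)^g \to \cM_n(\CC)$ and check it is a \cg\ for $G = \U$ that preserves direct sums: conjugating by $\s \in (C_{\U_n}(\CC\langle A\rangle) \tnz M_s(\CC)) \cap \U_{ns}$ fixes $A^{\oplus s}$, so $\s f[ns]\bigl(A^{\oplus s} + t(X-A^{\oplus s})\bigr)\s^{-1} = f[ns]\bigl(A^{\oplus s} + t(\s X \s^{-1} - A^{\oplus s})\bigr)$, and reading off the coefficient of $t^m$ gives the $\U$-similarity equivariance of $f_m$; the direct-sum compatibility comes from the analogous identity. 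The point where the unitary case genuinely differs is the analog of Lemma \ref{centGL}/Lemma \ref{centO}: one needs that a homogeneous polynomial \cg\ of degree $d$ for $G = \U$ lies in $\CC\langle A, A^*\rangle \ast \CC\X$. This follows the template of Lemma \ref{centO}, using that by the first fundamental theorem for $\U_n$ (\cite[Theorem 11.2]{Pro}) a homogeneous polynomial $I_n \tnz \U$-concomitant is a generalized polynomial in $M_n(\CC)\ast\CC\X$, that $C(C_{\U_n}(B)) = B$ for a $*$-subalgebra $B$ of $M_n(\CC)$ (the same skew-symmetric-elements-generate argument, since $e^{\lambda(c^*-c)} \in \U_n$), and that for $s > d$ generalized polynomials have unique matrix-coefficient expansions on $M_{ns}(\CC)$; the conjugation trick with $\s = \alpha 1 + \beta\sum_\ell(c_\ell \tnz e_{\ell,t+\ell} - c_\ell^* \tnz e_{t+\ell,\ell})$ then forces the offending coefficients to vanish.

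I expect the main obstacle to be precisely this centralizer lemma in the unitary setting, specifically the subtlety that real analyticity is being used (the coefficients are polynomials in $x_{ij}^{(k)}$ \emph{and} their conjugates), so one must make sure the invariant theory of \cite[Theorem 11.2]{Pro} is applied to the correct algebra with involution $x_k \mapsto x_k^*$, and that the identity $\tr(A^{w^*}) = \overline{\tr(A^w)}$ is respected throughout. Everything else — the existence of the series via setting $t = 1$ in the expansion of $t \mapsto f[ns](A^{\oplus s} + t(X - A^{\oplus s}))$, the norm convergence on $\cB(A,\delta)$, and the uniqueness via the analogs of Lemma \ref{matenote} and Proposition \ref{unique} together with $M_{n(m+1)}(\CC)$ admitting no nontrivial $*$-polynomial identity of degree $< n(m+1)$ — transfers with only cosmetic changes, which is why the appendix states the result and defers to ``the same lines as the proofs of analogous results.''
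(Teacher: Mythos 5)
Your proposal is correct and follows exactly the route the paper intends: the appendix explicitly defers to the proof of Theorem \ref{rumanalit}, replacing the first fundamental theorem for $\GL_n$/$\OO_n$ by \cite[Theorem 11.2]{Pro}, the degree bound on trace identities in $x_k,x_k^*$, and the observation that $\RR$-analytic functions expand in the variables and their conjugates. You have in fact supplied more detail than the paper does on the one genuinely new ingredient (the unitary analog of Lemma \ref{centO}, including why $\s$ is unitary and why $C(C_{\U_n}(B))=B$), and that detail checks out.
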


\begin{remark}
If $f$ is a $\U$-free polynomial map (i.e., for every $n\in \N$, $f[n]$ is a polynomial map in $x_{ij}^{(k)}$, $1\leq i,j,\leq n$, $1\leq k\leq g$) of bounded degree, then  $f$ is a polynomial  in the variables $x_k,x_k^*$ by Proposition \ref{Upol}. However, as $f$ is a polynomial map, it does not involve conjugate variables, so $f$ is a  polynomial in the variables $x_k$. This also follows from the fact that $\U_n$ is Zariski dense in $\GL_n$. 
Therefore $\U$-free $\CC$-analytic maps are fairly close to $\GL$-free $\CC$-analytic maps. 
\end{remark}

\subsubsection*{Acknowledgments.}
This paper was written while the second author was visiting the University of Auckland.
She would like to thank the first author for the hospitality and the inspiring atmosphere. 
The authors thank Dmitry Kaliuzhnyi-Verbovetskyi, Jim Agler, Victor Vinnikov and Matej Bre\v sar for sharing their expertise.

\newpage


\begin{thebibliography}{OHMS31}

\bibitem[AKV13]{AK}
G. Abduvalieva, D.\,S. Kaliuzhnyi-Verbovetskyi, Fixed point theorems for noncommutative functions,
{\em J. Math. Anal. Appl.} {\bf 401} (2013), 436--446.


\bibitem[AM+$^1$]{AM1}J. Agler, J.\, E. McCarthy, Global holomorphic functions in several non-commuting variables, {\em preprint} \url{http://arXiv:1305.1636}.

\bibitem[AM+$^2$]{AM2}J. Agler, J.\, E. McCarthy, The implicit function theorem and free algebraic sets, {\em preprint} \url{http://arXiv:1404.6032}.

\bibitem[AY+]{AY}J. Agler, N.\,J. Young, Symmetric functions of two noncommuting variables, {\em preprint}  \url{http://arXiv:1307.1588}.

\bibitem[AD03]{AlpDu} D. Alpay, C. Dubi, A realization theorem for rational functions of several complex variables, {\em Systems Control Lett.} {\bf 49} (2003), 225--229.

\bibitem[Ami65]{Ami}S.\, A. Amitsur, Generalized polynomial identities and pivotal monomials, 
{\em Trans. Amer. Math. Soc.} {\bf 114} (1965), 210--226.


\bibitem[BGM06]{BGM}
J.\,A. Ball, G. Groenewald, T. Malakorn:
 Bounded Real Lemma for Structured Non-Commutative
  Multidimensional Linear Systems and Robust Control,
{\it Multidimens. Syst. Signal Process.} {\bf 17} (2006), 119--150.


\bibitem[BV03]{BV}J.\,A. Ball, V. Vinnikov, Formal reproducing kernel Hilbert spaces: the commutative and noncommutative settings. In: {\em Reproducing kernel spaces and applications}, 
 Oper. Theory Adv. Appl. {\bf 143} (2003), 77--134.

\bibitem[BMM96]{BMM}K.\,I. Beidar, W.\,S. Martindale, A.\,V. Mikhalev, 
{\em Rings with generalized identities}, 
Monographs and Textbooks in Pure and Applied Mathematics, Marcel Dekker, Inc., 1996.




\bibitem[BCM07]{BCM}  M. Bre\v{s}ar,   M.\,A. Chebotar,  W.\,S. Martindale 3rd,  {\em Functional identities}, Birkh\" auser Verlag,  2007.

\bibitem[BK09]{BK}M. Bre\v sar, I. Klep, Noncommutative Polynomials, Lie Skew-Ideals and Tracial Nullstellensätze, {\em Math. Res. Lett.} {\bf 16} (2009),  605--626.

\bibitem[Coh95]{Coh}P.\,M. Cohn,  {\em Skew fields. Theory of general division rings}, Encyclopedia of Mathematics and its Applications, Cambridge University Press, 1995.

\bibitem[Dre00]{Dre}V. Drensky, {\em Free algebras and PI-algebras}, Springer-Verlag, 2000. 

\bibitem[HBJP87]{HBJP}J.\,W. Helton, J.\,A. Ball, C.\,R. Johnson, J.\,N. Palmer, {\em Operator theory, analytic functions,
matrices, and electrical engineering}, CBMS Regional Conference Series in Mathematics
{\bf 68}, AMS, 1987.

\bibitem[HKM11]{HKM11}J.\,W. Helton, I. Klep, S. McCullough, 
Proper analytic free maps, 
{\em J. Funct. Anal.} {\bf 260} (2011), 1476--1490. 


\bibitem[HKM12]{HKM12}J.\,W. Helton, J. Klep, S. McCullough, Free analysis, convexity and LMI domains. In: {\em Mathematical methods in systems, optimization, and control}, 195--219, 
Oper. Theory Adv. Appl. {\bf 222}, Birkh\"auser/Springer, 2012. 

\bibitem[HMV06]{HMV}J.\,W. Helton, S.\,A.  McCullough, V. Vinnikov, 
Noncommutative convexity arises from linear matrix inequalities, 
{\em J. Funct. Anal.} {\bf 240} (2006),  105--191.



\bibitem[Hig08]{Hig} N. Higham, {\em Functions of matrices: theory and computation}, Society for Industrial and Applied Mathematics (SIAM), 2008.


\bibitem[K-VV12]{KV0}D.\, S. Kaliuzhnyi-Verbovetskyi, V. Vinnikov, 
Noncommutative rational functions, their difference-differential calculus and realizations, 
{\em Multidimens. Syst. Signal Process} {\bf 23} (2012), 49--77.


\bibitem[K-VV+]{KV}D.\,S. Kaliuzhnyi-Verbovetskyi,  V. Vinnikov, {\it Foundations of Noncommutative Function Theory}, to appear in Math. Surveys and Monographs, AMS. \url{http://arxiv.org/abs/1212.6345}

\bibitem[KK83]{KK}B. Kaup, L. Kaup, 
{\em Holomorphic functions of several variables,  
An introduction to the fundamental theory},  Walter de Gruyter, 1983.

\bibitem[KMRT98]{KMRT}M.-A. Knus, A. Merkurjev, M. Rost, J.-P. Tignol, 
{\em The book of involutions},
With a preface in French by J. Tits, 
Amer. Math. Soc. Colloq. Publ., 44, AMS, 1998.

\bibitem[KP96]{KP} H. Kraft, C. Procesi, {\em Classical invariant theory}, 1996. 
\url{http://jones.math.unibas.ch/~kraft/Papers/KP-Primer.pdf}

\bibitem[KP02]{KrPa}S. Krantz, H.\,R.  Parks,
{\em A primer of real analytic functions}, 
Second edition, Birkh\"auser, 2002. 


\bibitem[Lan93]{Lan}S. Lang, 
{\em Real and functional analysis}, 
Third edition, Graduate Texts in Mathematics, Springer-Verlag, 1993.


\bibitem[MS11]{MS}
P.\,S. Muhly, B. Solel, Progress in noncommutative function theory,
{\it Sci. China Ser. A} {\bf 54} (2011), 2275--2294.



\bibitem[Pas+]{Pas}J.\,E. Pascoe, The inverse function theorem and the resolution of the Jacobian conjecture in free analysis, {\em preprint}
\url{http://arxiv.org/abs/1303.6011}

\bibitem[PT+]{PT}J.\,E. Pascoe, R. Tully-Doyle, Free Pick functions: representations, asymptotic behavior and matrix monotonicity in several noncommuting variables, 
{\em preprint}
\url{http://arxiv.org/abs/1309.1791}




\bibitem[Po06]{Po1}
 G. Popescu,
  Free holomorphic functions on the unit ball of
   $\mathcal B(\mathcal H)^n$,
  {\it J. Funct. Anal.} {\bf 241} (2006), 268--333.

\bibitem[Po10]{Po2} G. Popescu, 
Free holomorphic automorphisms of the unit ball of $B(H)^n$,
    {\it J. reine angew. Math.} {\bf 638} (2010), 119--168.


\bibitem[Pro76]{Pro}C. Procesi, The invariant theory of $n\times  n$ matrices, {\em Adv. Math.} {\bf 19} (1976), 306--381.

\bibitem[Pro07]{Pro3}C. Procesi, {\em Lie groups: An approach through invariants and representations},
	 Springer Universitext,
	 {2007}.

\bibitem[Raz74]{Raz} Yu.\,P. Razmyslov,  Identities with trace in full matrix algebras over a field of characteristic zero, {\em  Izv. Akad. Nauk SSSR Ser. Mat. } {\bf 38} (1974), 723--756. 

\bibitem[Row80]{Row}
L.\,H. Rowen, 
{\it Polynomial identities in ring theory}, Academic Press, 1980.


\bibitem[Tay73]{Tay}
J.\,L. Taylor, 
Functions of several noncommuting variables,
{\em Bull. Amer. Math. Soc.} {\bf 79} (1973), 1--34.


\bibitem[Voc04]{Voc04}D.\,V. Voiculescu, Free analysis questions. I: Duality transform for the coalgebra of $\partial_{X:B}$,  {\em International Math. Res. Notices} {\bf 16} (2004), 793--822.

\bibitem[Voc10]{Voc10}D.\,V. Voiculescu. Free analysis questions. II: The Grassmannian completion and the series expansions at the origin,   {\em J. reine angew. Math.} {\bf 645} (2010), 155--236.

\bibitem[ZZ97]{ZZ} C. Zizhen, H. Zhongdan,
On the continuity of the mth root of a continuous nonnegative definite matrix-valued function,
{\em J. Math. Anal. Appl.} {\bf 209} (1997), 60--66. 

\end{thebibliography}
\end{document}